
\documentclass[12pt,reqno]{amsproc}
\usepackage{amsfonts}
\usepackage{amsmath}
\usepackage{amssymb}
\usepackage{graphicx}

\setcounter{MaxMatrixCols}{10}

\setlength{\hoffset}{-0.9in}
\setlength{\textwidth}{6.75in} \theoremstyle{plain}
\newtheorem{theorem}{Theorem}[section]

\newtheorem{corollary}[theorem]{Corollary}

\newtheorem{definition}[theorem]{Definition}
\newtheorem{example}[theorem]{Example}

\newtheorem{lemma}[theorem]{Lemma}

\newtheorem{proposition}[theorem]{Proposition}

\numberwithin{equation}  {section}

\begin{document}

\begin{abstract}
In 2015, Yanni Chen, Don Hadwin and Junhao Shen proved a noncommutative
version of Beurling's theorems for a continuous unitarily invariant norm $%
\alpha $ on a tracial von Neumann algebra $\left( \mathcal{M},\tau \right) $
where $\alpha $ is $\left\Vert \cdot \right\Vert _{1}$-dominating with
respect to $\tau $. In the paper, we first define a class of norms $%
N_{\Delta }\left( \mathcal{M},\tau \right) $ on $\mathcal{M}$, called
determinant, normalized, unitarily invariant continuous norms on $\mathcal{M}$. If $\alpha \in N_{\Delta }\left( 
\mathcal{M},\tau \right) $, then there exists a faithful normal tracial
state $\rho $ on $\mathcal{M}$ such that $\rho \left( x\right) =\tau \left(
xg\right) $ for some positive $g\in L^{1}\left( \mathcal{Z},\tau \right) $
and the determinant of $g$ is positive. For every $\alpha \in N_{\Delta }\left( 
\mathcal{M},\tau \right) $, we study the noncommutative Hardy spaces $%
H^{\alpha }\left( \mathcal{M},\tau \right) $, then prove that the
Chen-Hadwin-Shen theorem holds for $L^{\alpha }\left( \mathcal{M},\tau
\right) $. The key ingredients in the proof of our result include a
factorization theorem and a density theorem for $L^{\alpha }\left( 
\mathcal{M},\rho \right) $.
\end{abstract}

\author{Haihui Fan}
\curraddr{University of New Hampshire}
\email{hun4@wildcats.unh.edu}
\author{Don Hadwin}
\curraddr{University of New Hampshire}
\email{don@unh.edu}
\author{Wenjing Liu}
\address{University of New Hampshire}
\email{wbs4@wildcats.unh.edu}
\title[New Chen-Beurling Theorem]{An Extension of the
Beurling-Chen-Hadwin-Shen Theorem for Noncommutative Hardy Spaces Associated
with Finite von Neumann Algebras}
\maketitle

\section{Introduction}

It has long been of great importance to operator theorist and operator
algebraist to study noncommutative Beurling's theorem\cite{Arveson},\cite%
{B.L.},\cite{B.L.2},\cite{Chen},\cite{H.S.},\cite{Nelson}. We recall some
concepts in noncommutative Hardy spaces with finite von Neumann algebras.
Given a finite von Neumann algebra $\mathcal{M}$ acting on a Hilbert space $H
$, the set of possibly unbounded closed and densely defined operators on $H$
which are affiliated to $\mathcal{M}$, form a topological algebra where the
topology is the (noncommutative) topology of convergence in measure. We
denote this algebra by $\widetilde{\mathcal{M}}$. The trace $\tau $ extends
naturally to the positive operators in $\widetilde{\mathcal{M}}$. The
important fact regarding this algebra, is that it is large enough to
accommodate all the noncommutative $L^{p}$ spaces corresponding to $\mathcal{%
M}$. Specifically, if $1\leq p<\infty $, then we define the space $L^{p}({%
\mathcal{M},\tau )}=\{x\in \widetilde{\mathcal{M}}:\tau (|x|^{p})<\infty \}$,
where the ambient norm is given by $\Vert \cdot \Vert _{p}=\tau (\Vert \cdot
\Vert ^{p})^{1/p}$. The space $L^{\infty }({\mathcal{M},\tau )}$ is defined
to be $\mathcal{M}$ itself. These spaces capture all the usual properties of 
$L^{p}$ spaces, with the dual action of $L^{p}$ on $L^{q}$ ($q$ conjugate to 
$p$) given by $(a,b)\rightarrow \tau (ab)$. For any subset $S$ of $\mathcal{M%
}$, we write $[\mathcal{S}]_{p}$ for the p-norm closure of $\mathcal{S}$ in $%
L^{p}({\mathcal{M},\tau )}$, with the understanding that $[\mathcal{S}]_{p}$
will denote the weak* closure in the case $p=\infty $. W. Arveson \cite%
{Beurling} introduced a concept of maximal subdiagonal algebra in 1967, also
known as a noncommutative $H^{\infty }$ space, to study the analyticity in
operator algebras. Let $\mathcal{M}$ be a finite von Neumann algebra with a
faithful normal tracial state $\tau .$ Let $\mathcal{A}$ be a weak* closed
unital subalgebra of $\mathcal{M},$ and $\mathcal{A} $ is called a finite maximal
subalgebra of $\mathcal{M}$ with respect to $\Phi $ if (i) $\mathcal{A}+%
\mathcal{A}^{\ast }$ is weak* dense in $\mathcal{M}$; (ii) $\Phi (xy)=\Phi
(x)\Phi (y)$ for $\mathcal{8}x,y\in \mathcal{A}$; (iii) $\tau \circ \Phi
=\tau $; and (iv) $\mathcal{D=A}\cap \mathcal{A}^{\ast }.$ Such a finite
maximal subdiagonal subalgebra $\mathcal{A}$ of $\mathcal{M}$ is also called
an $H^{\infty }$ space of $\mathcal{M}.$ For each $1\leq p\leq \infty$,
let $H^{p}$ be the completion of Arveson's noncommutative $H^{\infty }$ with
respect to $\left\Vert \cdot \right\Vert _{p}$. After Arveson's introduction
of noncommutative $H^{p}$ spaces, there are many studies to obtain a
Beurling's theorem for invariant subspaces in noncommutative $H^{p}$ spaces
(for example, see \cite{Bekjan},\cite{B.L.2},\cite{Chen}).

Y. Chen, D. Hadwin, J. Shen obtained a version of the
Blecher-Labuschagne-Beurling invariant subspace theorem on $H^{\infty }$%
-right invariant subspace in a noncommutative $L^{\alpha }(\mathcal{M},\tau
) $ space, where $\alpha $ is a normalized unitarily invariant, $\left\Vert
\cdot \right\Vert _{1}$-dominating, continuous norm.

In this paper, we will extend Chen-Hadwin-Shen's result in \cite{Chen} by
considering drop the condition that $\alpha $ is $\left\Vert \cdot
\right\Vert _{1}$-dominating. By defining a generalized $\alpha $ norm, we
have a version of Chen-Hadwin-Shen's result for noncommutative Hardy spaces.

{\footnotesize THEOREM} \ref{main}. \emph{Let }$\mathcal{M}$\emph{\ be a
finite von Neumann algebra with a faithful, normal, tracial state }$\tau $%
\emph{\ and }$\alpha $\emph{\ be a determinant, normalized, unitarily
invariant, continuous norm on }$\mathcal{M}$\emph{. Then there exists a
faithful normal tracial state }$\rho $\emph{\ on }$\mathcal{M}$ \emph{such
that }$\alpha \in N_{1}\left( \mathcal{M},\rho \right) $\emph{. Let }$%
H^{\infty }$\emph{\ be a finite subdiagonal subalgebra of }$\mathcal{M}$%
\emph{\ and }$\mathcal{D}=H^{\infty }\cap (H^{\infty })^{\ast }$\emph{. If }$%
\mathcal{W}$\emph{\ is a closed subspace of }$L^{\alpha }(\mathcal{M},\tau )$%
\emph{\ such that }$\mathcal{W}H^{\infty }\subseteq \mathcal{W}$\emph{, then
there exists a closed subspace }$\mathcal{Y}$\emph{\ of }$L^{\alpha }(%
\mathcal{M},\tau )$\emph{\ and a family }$\{u_{\lambda }\}_{\lambda \in
\Lambda }$\emph{\ of partial isometries in }$\mathcal{M}$\emph{\ such that:%
\newline
(1) }$u_{\lambda }^{\ast }\mathcal{Y}=0$\emph{\ for all }$\lambda \in
\Lambda $\emph{,\newline
(2) }$u_{\lambda }^{\ast }u_{\lambda }\in \mathcal{D}$\emph{\ and }$%
u_{\lambda }^{\ast }u_{\mu }=0$\emph{\ for all }$\lambda ,\mu \in \Lambda $%
\emph{\ with }$\lambda \neq \mu $\emph{\newline
(3) }$\mathcal{Y}=[H_{0}^{\infty }\mathcal{Y}]_{\alpha }$\emph{\newline
(4) } $\mathcal{W} = \mathcal{Y} \oplus^{col}(\oplus _{\lambda \in
\Lambda }^{col}u_{\lambda }H^{\alpha })$

Many tools used in \cite{Chen} are no longer available in an arbitrary $%
L^{\alpha }(\mathcal{M},\tau )$ space and new techniques  need to be invented.
First, we need using the Fuglede-Kadison determinant$,$ and inner, outer
factorization for noncommutative Hardy space in \cite{Bekjan}. Let $\Delta $
be Fuglede-Kadison determinant on $\mathcal{M}$ defined by\newline
\begin{equation*}
\Delta (x)=exp(\tau (\text{log}|x|))=exp(\int_{0}^{\infty }\text{log}(t)d\nu
_{|x|}(t)),
\end{equation*}%
where $d\nu _{|x|}(t)$ denotes the probability measure on $\mathbb{R_{+}}$,
Also, the definition of this determinant can be extended to a very large class of elements of $\widetilde{\mathcal{M}}.$\\

\begin{definition}
Let $1\leq p\leq \infty $. An element $x\in H^{p}(\mathcal{M},\tau )$ is
outer if $I\in \lbrack xH^{p}(\mathcal{M},\tau )]_{p}$, and $x\in H^{p}(%
\mathcal{M},\tau )$ is strongly outer if $x$ is outer and $\Delta (x)>0$. An
element $u$ is inner if $u\in H^{\infty }(\mathcal{M},\tau )$ and $u$ is
unitary.
\end{definition}

In order to prove our main result of the paper, we first get the following theorem.

{\footnotesize THEOREM} \ref{cdominating}. Let $\mathcal{M}$ be a finite von
Neumann algebra with a faithful, normal, tracial state $\tau $ and $\alpha $
be a normalized, unitarily invariant, continuous norm on $(\mathcal{M},\tau )
$. Then there exists a positive $g\in L^{1}(\mathcal{Z},\tau )$ such that
(i) $\rho (\cdot )=\tau (\cdot g)$ is a faithful normal tracial state on $%
\mathcal{M}$, (ii) $\alpha $ is $c\left\Vert \cdot \right\Vert _{1,\rho }$%
-dominating, for some $c>0$. (iii), $\rho (x)=\tau (xg)$ for every $x\in L^{1}(\mathcal{M},\rho ).$

{\footnotesize THEOREM} \ref{Halpha}. If $\alpha \in N_{\Delta }(\mathcal{M}%
,\tau ),$ then there exists a faithful normal tracial state $\rho $ such
that $H^{\alpha }(\mathcal{M},\rho )=H^{1}(\mathcal{M},\rho )\cap L^{\alpha
}(\mathcal{M},\rho )$.

Then we get a factorization theorem  and a density theorem for $L^{\alpha }(%
\mathcal{M},\tau )$ to get the main theorem.

{\footnotesize THEOREM} \ref{factorization}. Suppose $\alpha \in N_{\Delta
}\left( \mathcal{M},\tau \right) ,$ there exists a faithful normal tracial
state $\rho $ on $\mathcal{M}$ such that $\rho \left( x\right) =\tau \left(
xg\right) $ for some positive $g\in L^{1}\left( \mathcal{Z},\tau \right) $
and the determinant of $g$ is positive. If $x\in \mathcal{M}$ and $x^{-1}\in
L^{\alpha }\left( \mathcal{M},\rho \right) ,$ then there are unitary
operators $u_{1},u_{2}\in \mathcal{M}$ and $s_{1},s_{2}\in H^{\infty }$ such
that $x=u_{1}s_{1}=s_{2}u_{2}$ and $s_{1}^{-1},s_{2}^{-1}\in H^{\alpha }(%
\mathcal{M},\rho )$.

{\footnotesize THEOREM} \ref{density}. Let $\alpha \in N_{\Delta }\left( 
\mathcal{M},\tau \right) $, then there exists a faithful normal tracial
state $\rho $ on $\mathcal{M}$ such that $\rho \left( x\right) =\tau \left(
xg\right) $ for some positive $g\in L^{1}\left( \mathcal{Z},\tau \right) $
and the determinant of $g$ is positive. Also, if $\mathcal{W}$ is a closed
subspace of $L^{\alpha }(\mathcal{M},\rho )$ and $\mathcal{N}$ is a weak*
closed linear subspace of $\mathcal{M}$ such that $\mathcal{W}H^{\infty
}\subset \mathcal{W}$ and $\mathcal{N}H^{\infty }\subset \mathcal{N}$, then%
\newline
(1) $\mathcal{N}=[\mathcal{N}]_{\alpha }\cap \mathcal{M}$,\newline
(2) $\mathcal{W}\cap \mathcal{M}$ is weak* closed in $\mathcal{M}$,\newline
(3) $\mathcal{W}=[\mathcal{W}\cap \mathcal{M}]_{\alpha }$,\newline
(4) If $\mathcal{S}$ is a subspace of $\mathcal{M}$ such that $\mathcal{S}%
H^{\infty }\subset \mathcal{S}$, then $[\mathcal{S}]_{\alpha }=[\overline{%
\mathcal{S}}^{w\ast }]_{\alpha }$, where $\overline{\mathcal{S}}^{w\ast }$
is the weak*-closure of $\mathcal{S}$ in $\mathcal{M}$.

The organization of the paper is as follows. In section 2, we introduce
determinant, normalized, unitarily invariant continuous norms. In section 3,
we study the relations between noncommutative Hardy spaces $H^{\alpha }(%
\mathcal{M},\rho )$ and $H^{\alpha }\left( \mathcal{M},\tau \right) .$ In
section 4, we prove the main result of the paper, a version of
Chen-Hadwin-Shen's result for noncommutative Hardy spaces associated with
new norm. In section 5, we get a generalized noncommutative Beurling's
theorem for special von Neumann algebras.

\section{Determinant, normalized, unitarily invariant continuous norms}
In the following section, we give the property of central valued traces in \cite{Kadison}, and introduce a class of determinant, normalized, unitarily invariant continuous norms on finite von Neumann algebras and some interesting examples from this class. In the end of this section, we will obtain our first theorem. 

\begin{proposition}
\label{phiproperty}If $\mathcal{M}$ is a finite von Neumann algebra with the
center $\mathcal{Z}$ of $\mathcal{M}$, then there is a unique positive linear
mapping $\varphi $ from $\mathcal{M}$ into $\mathcal{Z}$ such that\newline
(1) $\varphi (xy)=\varphi (yx)$ for each $x$ and $y$ in $\mathcal{M}$,%
\newline
(2) $\varphi (z)=z$ for each $z$ in $\mathcal{Z}$,\newline
(3) $\varphi (x)>0$ if $x>0$ for $x$ in $\mathcal{M}$,\newline
(4) $\varphi (zx)=z\varphi (x)$ for each $z$ in $\mathcal{Z}$ and $x$ in $%
\mathcal{M}$,\newline
(5) $\Vert \varphi (x)\Vert \leq \Vert x\Vert $ for $x$ in $\mathcal{M}$,%
\newline
(6) $\varphi $ is ultraweakly continuous,\newline
(7) For any $x\in \mathcal{M}$, $\varphi (x)$ is the unique central element in
the norm closure of the convex hull of $\{uxu^{\ast }|u\in \mathcal{U}(%
\mathcal{M})\}$,\newline
(8) Every tracial state on $\mathcal{M}$ is of the form $\tau \circ \varphi $
where $\tau $ is a state on  $\mathcal{Z}$, i.e. every state on
the center $\mathcal{Z}$ of $\mathcal{M}$ extends uniquely to a tracial
state on $\mathcal{M}$,\newline
(9) $\varphi $ is faithful.
\end{proposition}

Let $\mathcal{M}$ be a finite von Neumann algebra with a faithful, normal,
tracial state $\tau $, the $\left\Vert \cdot \right\Vert _{p}$ is a
mapping from $\mathcal{M}$ to $[0,\infty )$ defined by $\left\Vert
x\right\Vert _{p}=\left( \tau (\left\vert x\right\vert ^{p})\right) ^{1/p}$, 
$\mathcal{8}x\in \mathcal{M},0<p<\infty .$ It is known that $\left\Vert
\cdot \right\Vert _{p}$ is a norm if $1\leq p<\infty $, and a quasi-norm if $%
0<p<1$.

\begin{definition}
Let $\mathcal{M}$ be a finite von Neumann algebra with a faithful, normal,
tracial state $\tau $. Assume $\alpha :\mathcal{M}\rightarrow \lbrack
0,\infty )$ is a norm satisfying\newline
(1) $\alpha \left( I\right) =1$, i.e., $\alpha $ is normalized,\newline
(2) $\alpha \left( x\right) =\alpha \left( \left\vert x\right\vert \right) $%
\ for all $x\in \mathcal{M}$ and $|x|=(x^{\ast }x)^{1/2}$, i.e., $\alpha $ is a
gauge,\newline
(3) $\alpha \left( u^{\ast }xu\right) =\alpha \left( x\right) ,u\in \mathcal{%
U}(\mathcal{M})$ and $x\in \mathcal{M}$, i.e., $\alpha $ is unitarily invariant,%
\newline
(4) $\lim_{\tau \left( e\right) \rightarrow 0}\alpha \left( e\right) =0$\ as 
$e$\ ranges over the projections in $\mathcal{M}$.\emph{\ }i.e., if $\{e_{\lambda}\}$ is a net of projections in $\mathcal{M}$ and $\tau \left( e_{\lambda} \right) \rightarrow 0$, then $\alpha \left( e_{\lambda} \right) \rightarrow 0$  which means $\alpha $ is continuous.\newline
Then we call $\alpha $ is a normalized unitarily invariant continuous norm.
And we denote $N(\mathcal{M},\tau )$ to be the collection of all such norms.
\end{definition}

\begin{definition}
We denote by $N_{1}\left( \mathcal{M},\tau \right) ,$ the collection of all
these norms $\alpha :\mathcal{M}\rightarrow \lbrack 0,\infty )$ such that%
\newline
(1) $\alpha \in N\left( \mathcal{M},\tau \right) $,\newline
(2) $\forall x\in \mathcal{M}$, $\alpha \left( x\right) \geq c\left\Vert
x\right\Vert _{1}$, for some $c>0.$\newline
A norm $\alpha $ in $N_{1}\left( \mathcal{M},\tau \right) $ is called a 
\emph{normalized, unitarily invariant }$\left\Vert \cdot \right\Vert _{1}$%
\emph{-dominating continuous }norm on $\mathcal{M}.$
\end{definition}

\begin{definition}
\label{defndelta}We denote by $N_{\Delta }\left( \mathcal{M},\tau \right) $,
the collection of all these norms $\alpha :\mathcal{M}\rightarrow \lbrack
0,\infty )$ such that\newline
(1) $\alpha \in N\left( \mathcal{M},\tau \right) $,\newline
(2) There exists a positive $g\in L^{1}\left( \mathcal{M},\tau \right) $
such that $\Delta \left( g\right) >0$ and $\alpha \left( x\right) \geq c\tau
\left( \left\vert x\right\vert g\right) $ for some $c>0$.\newline
A norm $\alpha $ in $N_{\Delta }\left( \mathcal{M},\tau \right) $ is called
a \emph{determinant, normalized, unitarily invariant continuous }norm on $%
\mathcal{M}.$
\end{definition}

\begin{example}
For the definition \ref{defndelta}, if we take $g=1,$ then $\alpha \in
N_{1}\left( \mathcal{M},\tau \right) $, i.e., $N_{\Delta }\left( \mathcal{M}%
,\tau \right) \subset N_{1}\left( \mathcal{M},\tau \right) $.
\end{example}

\begin{example}
Each $p$-norm $\left\Vert \cdot \right\Vert _{p}$ is in $N(\mathcal{M},\tau
),$ $N_{1}(\mathcal{M},\tau )$, and $N_{\Delta }(\mathcal{M},\tau )$ for $%
1\leq p<\infty $.
\end{example}

\begin{example}
Let $\mathcal{M}$ be a finite von Neumann algebra with a faithful, normal,
tracial state $\tau $. Let $E(0,1)$ be a symmetric Banach function space on $%
(0,1)$ and $E(\tau )$ be the noncommutative Banach function space with a
norm $\Vert \cdot \Vert _{E(\tau )}$ corresponding to $E(0,1)$ and
associated with $(\mathcal{M},\tau )$. If $E(0,1)$ is also order continuous,
then the restriction of the norm $\Vert \cdot \Vert _{E(\tau )}$ to $%
\mathcal{M}$ lies in $N(\mathcal{M},\tau )$ and $N_{1}(\mathcal{M},\tau )$.
\end{example}

In order to prove the first theorem in this paper, we need the following
lemmas, the first lemma is proved by H. Fan, D. Hadwin and W. Liu in \cite%
{F.H.W}.

\begin{lemma}
\label{lmforcdominating}Suppose $\left( X,\Sigma ,\mu \right) $ is a
probability space and $\alpha $ is a continuous normalized gauge norm on $%
L^{\infty }(\mu )$. Then there exists $0<c<1$ and a probability measure $%
\lambda $ on $\Sigma $ such that $\lambda \ll \mu $ and $\mu \ll \lambda $,
such that $\alpha $ is $c\Vert \cdot \Vert _{1,\lambda }$-dominating.
\end{lemma}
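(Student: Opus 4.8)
The plan is to dualize. Writing $d\lambda=h\,d\mu$, the desired inequality $\alpha(f)\ge c\,\|f\|_{1,\lambda}=c\int|f|\,h\,d\mu$ for all $f\in L^{\infty}(\mu)$ says exactly that $h$ lies in a scaling of the associate (K\"othe dual) ball: if $\alpha'(g)=\sup\{\int|fg|\,d\mu:\alpha(f)\le 1\}$ denotes the associate norm, the condition is $\alpha'(h)\le 1/c$. Moreover $d\lambda=h\,d\mu$ forces $\lambda\ll\mu$ automatically, while $\mu\ll\lambda$ is equivalent to $h>0$ a.e.; and testing on $f=1$ gives $1=\alpha(1)\ge c\,\lambda(X)=c$, so $c\le 1$ is forced and any admissible constant may be shrunk below $1$. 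Thus the whole lemma reduces to one statement: there is a strictly positive $h\in L^{1}(\mu)$ with $\int h\,d\mu=1$ and $\alpha'(h)<\infty$. One then sets $\lambda=h\,d\mu$ and $c=\min\{1/\alpha'(h),1/2\}$.

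First I would record the elementary sandwich. Since $|f|\le\|f\|_{\infty}\cdot I$ and $\alpha$ is a monotone gauge norm with $\alpha(I)=1$, we get $\alpha(f)\le\|f\|_{\infty}$; dually $\alpha'(g)\ge\|g\|_{1}$, so every associate element automatically lies in $L^{1}(\mu)$ and the normalization $\int h\,d\mu=1$ is harmless. I would also note that $F\mapsto\alpha(\chi_{F})$ is a continuous submeasure equivalent to $\mu$: monotone by the gauge property, subadditive by the triangle inequality, and vanishing only on $\mu$-null sets since $\alpha$ is a norm. This records the precise sense in which $\alpha$ controls small sets.

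The heart of the argument is then to produce a strictly positive element of the associate space $L^{\alpha'}(\mu)$. The key step is to turn the indicator-level hypothesis $\lim_{\mu(F)\to 0}\alpha(\chi_{F})=0$ into order continuity of the norm $\alpha$ (that is, $f_{n}\downarrow 0$ in $L^{\alpha}$ implies $\alpha(f_{n})\to 0$): for a gauge norm on a finite measure space the two are equivalent, the nontrivial direction being handled by splitting $f_{1}$ into a bounded part plus a tail controlled by the indicator hypothesis. Granting order continuity, standard Banach-function-space duality yields $(L^{\alpha})^{\ast}=L^{\alpha'}$, so the associate space norms $L^{\alpha}$. Being a solid (ideal) space that norms $L^{\alpha}$, $L^{\alpha'}$ cannot annihilate any positive-measure set: for each $E$ with $\mu(E)>0$ there is $0\le g\in L^{\alpha'}$ nonzero on a positive-measure subset of $E$. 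I would then run an exhaustion argument, choosing a countable disjoint family $\{E_{n}\}$ together with $0\le h_{n}\in L^{\alpha'}$ strictly positive on $E_{n}$, whose supports cover $X$ up to a null set (possible since $\mu$ is finite and no set escapes), and set $h=\sum_{n}a_{n}h_{n}$ with $a_{n}>0$ small enough that $\sum_{n}a_{n}\alpha'(h_{n})<\infty$, rescaled so $\int h\,d\mu=1$. Solidity and completeness of $L^{\alpha'}$ give $\alpha'(h)\le\sum_{n}a_{n}\alpha'(h_{n})<\infty$, while $h>0$ a.e. by construction.

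The main obstacle is exactly this passage from the indicator hypothesis to order continuity of the norm and the accompanying duality $(L^{\alpha})^{\ast}=L^{\alpha'}$ with the norming property. A naive attempt to build $h$ by hand from the submeasure $F\mapsto\alpha(\chi_{F})$ fails: a layer-cake estimate bounding $\int_{E}|f|$ by $\int_{0}^{\infty}\alpha(\chi_{\{|f|>t\}})\,dt$ only yields a logarithmically divergent bound, so indicator-level domination does \emph{not} upgrade to $L^{1}$-domination directly. This is why the soft functional-analytic route through order continuity, rather than an explicit formula for $h$, is the right path. The atomic part of $\mu$, where continuity is vacuous, causes no difficulty: on an atom $a$ one has $\alpha'(\chi_{a})=\mu(a)/\alpha(\chi_{a})<\infty$, so atoms are automatically absorbed by the exhaustion.
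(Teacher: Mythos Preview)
The paper does not supply its own proof of this lemma: it is quoted from \cite{F.H.W} and used as a black box, so there is nothing in the present paper to compare your argument against.

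On its own merits your outline is sound. The reduction to producing a strictly positive element $h$ of the associate space $L^{\alpha'}$ is exactly right, and the exhaustion argument is standard once one knows that every $\alpha$-continuous functional on $L^{\infty}$ is represented by an element of $L^{\alpha'}$. The one step worth writing out is the bridge from the indicator hypothesis to this representation: the Egorov splitting you allude to shows that $f_{n}\downarrow 0$ in $L^{\infty}$ forces $\alpha(f_{n})\to 0$; hence any $\alpha$-bounded functional $\phi$ on $L^{\infty}$ satisfies $\phi(f_{n})\to 0$ whenever $f_{n}\downarrow 0$, so $\phi$ is countably additive and is given by integration against some $g\in L^{1}(\mu)$, with $\alpha'(g)=\|\phi\|<\infty$. (The implicit monotonicity $0\le g\le h\Rightarrow\alpha(g)\le\alpha(h)$ that the Egorov argument uses follows from the gauge property: every $L^{\infty}$-contraction is an average of two unimodular functions, so $\alpha(gf)\le\|g\|_{\infty}\alpha(f)$.) With that filled in, your Hahn--Banach/solidity/exhaustion chain is complete.
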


\begin{lemma}
Let $\mathcal{M}$ be a finite von Neumann algebra with a faithful, normal,
tracial state $\tau $. Suppose $\alpha \in N\left( \mathcal{M},\tau \right)
, $ then the central valued trace $\varphi $ satisfy $\alpha (\varphi
(x))\leq \alpha (x)$, for every $x\in \mathcal{M}.$

\begin{proof}
By proposition \ref{phiproperty} (7), for any $x\in \mathcal{M}$, the
central value trace $\varphi (x)$ is in the norm closure of the convex hull
of $\{uxu^{\ast }|u\in \mathcal{U}(\mathcal{M})\}$, so there exist a net $%
\left\{ x_{\lambda }\right\} _{\lambda \in \Lambda }$ in the convex hull of $%
\{uxu^{\ast }|u\in \mathcal{U}(\mathcal{M})\}$ such that $x_{\lambda }$
converges to $\varphi \left( x\right) $. Since $\alpha $ is a continuous
norm, $\alpha \left( x_{\lambda }-\varphi \left( x\right) \right)
\rightarrow 0$, i.e., $\alpha \left( \varphi \left( x\right) \right) =%
\underset{\lambda }{\lim }\alpha \left( x_{\lambda }\right) $. Since $%
x_{\lambda }$ is in the convex hull of $\{uxu^{\ast }|u\in \mathcal{U}(%
\mathcal{M})\}$, $\alpha \left( x_{\lambda }\right) \leq \alpha \left(
x\right) $. Therefore, $\alpha (\varphi (x))\leq \alpha (x)$.
\end{proof}
\end{lemma}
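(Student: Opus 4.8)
The plan is to exploit the characterization of the central-valued trace furnished by Proposition \ref{phiproperty}(7): for each $x\in \mathcal{M}$, the element $\varphi(x)$ lies in the operator-norm closure of the convex hull $K$ of the unitary orbit $\{uxu^{\ast}:u\in \mathcal{U}(\mathcal{M})\}$. The strategy is then to control $\alpha$ on $K$ and to transport that control to $\varphi(x)$ by a continuity argument.

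First I would record the auxiliary estimate $\alpha(a)\leq \left\Vert a\right\Vert$ for every $a\in \mathcal{M}$, valid for any $\alpha\in N(\mathcal{M},\tau)$. Setting $b=|a|/\left\Vert a\right\Vert$ one has $0\leq b\leq I$, so $b$ is a self-adjoint contraction admitting the unitary dilation $b=\tfrac{1}{2}(u+u^{\ast})$ with $u=b+i(I-b^{2})^{1/2}$ unitary. Since $|u|=I$, the gauge and normalization properties give $\alpha(u)=\alpha(u^{\ast})=\alpha(I)=1$, and the triangle inequality yields $\alpha(b)\leq 1$; by homogeneity this gives $\alpha(a)=\alpha(|a|)\leq \left\Vert a\right\Vert$. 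In particular $\alpha$ is continuous with respect to the operator norm.

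Next I would select a net $\{x_{\lambda}\}\subset K$ with $\left\Vert x_{\lambda}-\varphi(x)\right\Vert \to 0$, which exists by (7). The estimate above gives $\alpha(x_{\lambda}-\varphi(x))\leq \left\Vert x_{\lambda}-\varphi(x)\right\Vert \to 0$, so the reverse triangle inequality yields $\alpha(\varphi(x))=\lim_{\lambda}\alpha(x_{\lambda})$. Finally, each $x_{\lambda}$ is a convex combination $\sum_{i}c_{i}u_{i}xu_{i}^{\ast}$ with $c_{i}\geq 0$ and $\sum_{i}c_{i}=1$; combining the triangle inequality with unitary invariance (property (3), which upon replacing $u$ by $u_{i}^{\ast}$ also gives $\alpha(u_{i}xu_{i}^{\ast})=\alpha(x)$) produces $\alpha(x_{\lambda})\leq \sum_{i}c_{i}\alpha(x)=\alpha(x)$. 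Passing to the limit gives $\alpha(\varphi(x))\leq \alpha(x)$.

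The only delicate point is the passage from operator-norm convergence to convergence in $\alpha$, which is precisely where the domination $\alpha\leq \left\Vert \cdot \right\Vert$ is needed. I would emphasize that the continuity axiom (4), which only controls projections of vanishing trace, does not by itself license this step, so the unitary-dilation estimate is the genuine engine of the argument.
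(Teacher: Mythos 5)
Your proposal follows the same route as the paper's proof: approximate $\varphi(x)$ in operator norm by convex combinations of unitary conjugates of $x$ (Proposition \ref{phiproperty}(7)), bound $\alpha$ on those combinations by $\alpha(x)$ using unitary invariance and the triangle inequality, and pass to the limit. The one genuine difference is that you explicitly justify the limiting step: you prove $\alpha(a)\leq\Vert a\Vert$ for all $a\in\mathcal{M}$ via the unitary dilation $b=\tfrac{1}{2}(u+u^{\ast})$ of a positive contraction $b$, with $u=b+i(I-b^{2})^{1/2}$, so that operator-norm convergence of $x_{\lambda}$ to $\varphi(x)$ forces $\alpha(x_{\lambda}-\varphi(x))\to 0$. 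The paper instead simply asserts ``since $\alpha$ is a continuous norm, $\alpha(x_{\lambda}-\varphi(x))\to 0$,'' but the continuity axiom (4) in the definition of $N(\mathcal{M},\tau)$ only concerns projections of vanishing trace and does not by itself deliver this implication; your observation that the domination $\alpha\leq\Vert\cdot\Vert$ is the actual engine of that step is correct and fills a real gap in the paper's argument. Your proof is therefore not only correct but more complete than the one in the paper.
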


\begin{theorem}
\label{cdominating}Let $\mathcal{M}$ be a finite von Neumann algebra with a
faithful, normal, tracial state $\tau $ and $\alpha $ be a normalized,
unitarily invariant, continuous norm on $(\mathcal{M},\tau )$. Then there
exists a positive $g\in L^{1}(\mathcal{Z},\tau )$ such that (i) $\rho (\cdot
)=\tau (\cdot g)$ is a faithful normal tracial state on $\mathcal{M}$, (ii) $%
\alpha $ is $c\left\Vert \cdot \right\Vert _{1,\rho }$-dominating for some $c>0$. (iii), $%
\rho (x)=\tau (xg)$ for every $x\in L^{1}(\mathcal{M},\rho ).$
\end{theorem}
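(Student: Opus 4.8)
The plan is to reduce the whole problem to the center $\mathcal{Z}$ and then use the center-valued trace $\varphi$ from Proposition \ref{phiproperty} to transport a domination estimate from $\mathcal{Z}$ out to all of $\mathcal{M}$. First I would observe that $\mathcal{Z}$ is an abelian von Neumann algebra carrying the faithful normal state $\tau|_{\mathcal{Z}}$, so it is $\ast$-isomorphic to $L^{\infty}(X,\Sigma,\mu)$ for a probability space $(X,\Sigma,\mu)$ with $\int\cdot\,d\mu$ corresponding to $\tau|_{\mathcal{Z}}$. The restriction $\alpha|_{\mathcal{Z}}$ is a norm with $\alpha(I)=1$ and $\alpha(z)=\alpha(|z|)$ (from the gauge property), and the continuity condition $\lim_{\tau(e)\to 0}\alpha(e)=0$ over projections $e\in\mathcal{Z}$ translates exactly into the continuity hypothesis of Lemma \ref{lmforcdominating} with respect to $\mu$. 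Thus $\alpha|_{\mathcal{Z}}$ is a continuous normalized gauge norm on $L^{\infty}(\mu)$, and Lemma \ref{lmforcdominating} produces $0<c<1$ and a probability measure $\lambda$ with $\lambda\ll\mu$ and $\mu\ll\lambda$ such that $\alpha(z)\ge c\int|z|\,d\lambda$ for all $z\in\mathcal{Z}$. By the Radon--Nikodym theorem $d\lambda=g\,d\mu$ for a positive $g\in L^{1}(\mu)$ with $g>0$ a.e.\ (since $\mu\ll\lambda$), and transporting back gives a positive $g\in L^{1}(\mathcal{Z},\tau)$ with $\int|z|\,d\lambda=\tau(|z|g)$ for $z\in\mathcal{Z}$.

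Next I would define $\rho(\cdot)=\tau(\cdot\,g)$ and verify (i). Normalization is $\rho(I)=\tau(g)=\lambda(X)=1$. Traciality follows because $g$ is central: $\rho(xy)=\tau(xyg)=\tau(xgy)=\tau(yxg)=\rho(yx)$, using centrality of $g$ and the trace property of $\tau$. Faithfulness holds since for $x^{\ast}x\ge 0$ the element $x^{\ast}xg$ is a product of commuting positive operators, hence positive, so $\rho(x^{\ast}x)=\tau(x^{\ast}xg)=0$ forces $x^{\ast}xg=0$ by faithfulness of $\tau$, and then $g>0$ a.e.\ gives $x=0$. Normality is automatic from $g\in L^{1}$.

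The crux of the argument is part (ii), and this is where I expect the main difficulty. For $x\in\mathcal{M}$ I would compute $\|x\|_{1,\rho}=\rho(|x|)=\tau(|x|g)$. Since $\varphi$ preserves the trace (from Proposition \ref{phiproperty}(7), as $\tau(uxu^{\ast})=\tau(x)$ is stable under convex combinations and norm limits, so $\tau\circ\varphi=\tau$) and since $g$ is central with $\varphi(g\,|x|)=g\,\varphi(|x|)$ by Proposition \ref{phiproperty}(4), I get
\begin{equation*}
\tau(|x|g)=\tau\bigl(\varphi(|x|g)\bigr)=\tau\bigl(g\,\varphi(|x|)\bigr)=\int \varphi(|x|)\,d\lambda,
\end{equation*}
where $\varphi(|x|)\ge 0$ lies in $\mathcal{Z}\cong L^{\infty}(\mu)$. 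Applying the commutative domination from Lemma \ref{lmforcdominating} to $\varphi(|x|)$ yields $\alpha(\varphi(|x|))\ge c\int\varphi(|x|)\,d\lambda=c\,\|x\|_{1,\rho}$. Finally, the preceding lemma gives $\alpha(\varphi(|x|))\le\alpha(|x|)=\alpha(x)$, so combining these inequalities produces $\alpha(x)\ge c\,\|x\|_{1,\rho}$, which is (ii). The delicate point is precisely this chain: averaging down to the center via $\varphi$ (which only decreases $\alpha$), dominating there, and recognizing the pulled-back integral as $\|x\|_{1,\rho}$.

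For part (iii) I would argue by density. Because $g$ is central and positive, $(xg)^{\ast}(xg)=g^{2}x^{\ast}x$ and $g^{2}$ commutes with $x^{\ast}x$, so $|xg|=g|x|$ and hence $\|xg\|_{1,\tau}=\tau(g|x|)=\|x\|_{1,\rho}$. Thus $x\mapsto xg$ is an isometry of $L^{1}(\mathcal{M},\rho)$ into $L^{1}(\mathcal{M},\tau)$, making $\tau(xg)$ well defined and continuous on $L^{1}(\mathcal{M},\rho)$. Since the identity $\rho(x)=\tau(xg)$ holds for $x\in\mathcal{M}$ and $\mathcal{M}$ is $\|\cdot\|_{1,\rho}$-dense in $L^{1}(\mathcal{M},\rho)$, it extends to all of $L^{1}(\mathcal{M},\rho)$ by continuity, giving (iii).
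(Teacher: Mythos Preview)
Your proposal is correct and follows essentially the same route as the paper: restrict $\alpha$ to the center, invoke Lemma~\ref{lmforcdominating} to get $\lambda\sim\mu$ and the domination on $\mathcal{Z}$, and then push the estimate out to $\mathcal{M}$ via the center-valued trace $\varphi$ together with the preceding lemma $\alpha(\varphi(\cdot))\le\alpha(\cdot)$. The only cosmetic differences are that the paper first defines $\rho=\rho_0\circ\varphi$ and extracts $g$ at the end by approximating $h=d\lambda/d\mu$ with simple functions, whereas you produce $g$ immediately via Radon--Nikodym; your use of $\varphi(g|x|)=g\,\varphi(|x|)$ for unbounded $g\in L^{1}(\mathcal{Z},\tau)$ tacitly relies on exactly that approximation, which the paper makes explicit.
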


\begin{proof}
Since the center $\mathcal{Z}$ of $\mathcal{M}$ is an abelian von Neumann
algebra, there is a compact subset $X$ of $\mathbb{R}$ and a regular Borel
probability measure on $X$ such that the mapping $\pi $ from $\mathcal{Z}$
to $L^{\infty }(X,\mu )$ is $\ast $-isomorphic and WOT-homeomorphic. Since $%
\alpha $ is a continuous normalized unitarily invariant norm on $(\mathcal{M}%
,\tau )$, it is easy to check $\overline{\alpha }=\alpha \circ \pi ^{-1}$
satisfying\newline
(i) $\overline{\alpha }(1)=\alpha \circ \pi ^{-1}(1)=\alpha (\pi
^{-1}(1))=\alpha (I)=1,$\newline
(ii) $\overline{\alpha }(f)=\alpha \circ \pi ^{-1}(f)=\alpha (u\pi
^{-1}(f))=\alpha (\pi ^{-1}(wf))=\alpha (\pi ^{-1}(|f|))=\overline{\alpha }%
(|f|)$, where $|f|=wf,|w|=1$ and there is a unitary $u$ such that $\pi (u)=w$%
,\newline
(iii) For given borel set $\{E_{n}\}_{n=1}^{\infty }\mathcal{\subseteq }X,$
there exist a sequence $\{e_{n}\}\subseteq \mathcal{Z}$ such that $\pi
^{-1}(\chi _{E_{n}})=e_{n}$ for every $n\in 
\mathbb{N}
.$ If $\mu (E_{n})\rightarrow 0$, then $\tau (e_{n})\rightarrow 0$. So $%
\alpha (e_{n})\rightarrow 0$ since $\alpha $ is continuous. Thus $\underset{%
n\rightarrow \infty }{\lim }\overline{\alpha }(\chi _{E_{n}})=\underset{%
n\rightarrow \infty }{\lim }\alpha \circ \pi ^{-1}(\chi _{En})=\underset{%
n\rightarrow \infty }{\lim }\alpha (e_{n})\rightarrow 0$. Thus $\overline{%
\alpha }$ is a continuous normalized gauge norm on $L^{\infty }(X,\mu )$.

By the lemma \ref{lmforcdominating}, there exists a probability measure $%
\lambda $ such that $\lambda \ll \mu $ and $\mu \ll \lambda $ and there
exists $c>0$ such that $\forall f\in L^{\infty }(X,\mu )=L^{\infty }(X,\lambda ),%
\overline{\alpha }(f)\geq c\Vert f\Vert _{1,\lambda }$. Define $\rho
_{0}(x)=\int_{X}\pi (x)d\lambda $, we check $\rho _{0}$ is a faithful normal
tracial state on $\mathcal{Z}$.\newline
(1) $\rho _{0}(I)=\int_{X}\pi (I)d\lambda =\int_{X}1d\lambda =1$,\newline
(2) $\rho _{0}(xy)=\int_{X}\pi (xy)d\lambda =\int_{X}\pi (yx)d\lambda =\rho
_{0}(yx)$,\newline
(3) Since $x_{n}\rightarrow x$ in WOT topology, $\pi (x_{n})\rightarrow \pi
(x)$ in weak* topology, i.e., $\int_{X}\pi (x_{n})d\lambda =\int_{X}\pi
(x_{n})gd\mu \rightarrow \int_{X}\pi (x)gd\mu =\int_{X}\pi (x)d\lambda .$
Thus $\rho _{0}(x_{n})\rightarrow \rho _{0}(x).$ Therefore $\rho _{0}$ is
normal.\newline
(4) For every $x\in \mathcal{Z},\rho _{0}(x^{\ast }x)=\int_{X}\pi (x^{\ast
}x)d\lambda =\int_{X}\pi (x)^{2}d\lambda =0$, so $\pi (x)^{2}=0$ and $x=0$,
which means $\rho _{0}$ is faithful.

Now claim that $\alpha $ is $c\Vert \cdot \Vert _{1,\rho }$-dominating on $(%
\mathcal{M},\rho )$. For some constant $c>0,$ $\forall x\in \mathcal{Z}$, $%
\alpha (x)=\overline{\alpha }\circ \pi (x)=\overline{\alpha }(\pi (x))\geq
c\Vert \pi (x)\Vert _{1,\lambda }=c\int_{X}|\pi (x)|d\lambda =c\int_{X}\pi
(|x|)d\lambda =c\rho (|x|)=c\Vert x\Vert _{1,\rho }.$ So we have $\alpha
(x)\geq c\Vert x\Vert _{1,\rho },\forall x\in \mathcal{Z}.$ Also, we have $%
\mathcal{M}\overset{\varphi }{\rightarrow }\mathcal{Z}\overset{\rho _{0}}{%
\rightarrow }\mathbb{C}$, where $\varphi $ is the mapping in lemma \ref%
{phiproperty}$.$ Let $\rho =\rho _{0}\circ \varphi ,$ then $\rho $ is a
state on $\mathcal{M}$, and $\forall x\in \mathcal{M},\alpha (x)\geq \alpha
(\varphi (x))\geq c\Vert \varphi (x)\Vert _{1,\rho _{0}}=c\Vert \varphi
(x)\Vert _{1,\rho }=c\Vert x\Vert _{1,\rho }$. Therefore, there exists a
faithful normal tracial state $\rho $ on $\mathcal{M}$ such that $\alpha $
is a $c\Vert \cdot \Vert _{1,\rho }$-dominating on $(\mathcal{M},\rho )$.

Since $\rho (x)=\int_{X}\pi (x)d\lambda =\int_{X}\pi (x)hd\mu ,$ where $h=%
\frac{d\lambda }{d\mu }\in L^{1}(X,\mu ),$ we can choose simple functions $%
\{h_{i}\}_{i=1}^{\infty }$ such that $0\leq h_{1}\leq h_{2}\leq \cdots $ and 
$h_{n}\rightarrow h$ as $n\rightarrow \infty .$ And also we can choose $%
0\leq x_{1}\leq x_{2}\leq \cdots $ in $\mathcal{Z}$ so that $\pi
(x_{n})=h_{n}$ for each $n.$ Therefore,%
\begin{equation*}
\rho (x)=\rho (\varphi (x))=\underset{n\rightarrow \infty }{\lim }\tau
(x_{n}\varphi (x))=\underset{n\rightarrow \infty }{\lim }\tau (\varphi
(x_{n}x))=\underset{n\rightarrow \infty }{\lim }\tau (x_{n}x))=\tau (xg),
\end{equation*}%
where $g\in L^{1}(\mathcal{Z},\tau ).$
\end{proof}

\begin{example}
Given any finite von Neumann algebra $\mathcal{M}$ with a faithful normal
tracial state $\tau $ and $\alpha \in N(\mathcal{M},\tau ),$ by theorem \ref%
{cdominating}, there exists a positive $g\in L^{1}\left( \mathcal{M},\tau
\right) $ such that $\alpha \left( x\right) \geq c\tau \left( \left\vert
x\right\vert g\right) $ for some $c>0.$ If $\Delta \left( g\right) >0,$ then 
$\alpha \in N_{\Delta }(\mathcal{M},\tau ).$
\end{example}

\section{Noncommutative Hardy spaces}

Let $\mathcal{M}$ be a finite von Neumann algebra with a faithful, normal,
tracial state $\tau .$ Given a von Neumann subalgebra $\mathcal{D}$ of $%
\mathcal{M}$, a conditional expectation $\Phi $: $\mathcal{M}\rightarrow 
\mathcal{D}$ is a positive linear map satisfying $\Phi (I)=I$ and $\Phi
(x_{1}yx_{2})=x_{1}\Phi (y)x_{2}$ for all $x_{1},x_{2}\in \mathcal{D}$ and $%
y\in \mathcal{M}.$ There exists a unique conditional expectation $\Phi
_{\tau }$: $\mathcal{M}\rightarrow \mathcal{D}$ satisfying $\tau \circ \Phi
_{\tau }(x)=\tau (x)$ for every $x\in \mathcal{M}$. Now we recall
noncommutative classical Hardy spaces $H^{\infty }(\mathbb{T})$ in \cite%
{Arveson}.

\begin{definition}
Let $\mathcal{A}$ be a weak* closed unital subalgebra of $\mathcal{M}$, and
let $\Phi _{\tau }$ be the unique faithful normal trace preserving
conditional expectation from $\mathcal{M}$ onto the diagonal von Neumann
algebra $\mathcal{D}=\mathcal{A}\cap \mathcal{A^{\ast }}$. Then $\mathcal{A}$
is called a finite, maximal subdiagonal subalgebra of $\mathcal{M}$ with
respect to $\Phi _{\tau }$ if\newline
(1) $\mathcal{A}+\mathcal{A^{\ast }}$ is weak* dense in $\mathcal{M}$,%
\newline
(2) $\Phi _{\tau }(xy)=\Phi _{\tau }(x)\Phi _{\tau }(y)$ for all $x,y\in 
\mathcal{A}$.\newline
Such $\mathcal{A}$ will be denoted by $H^{\infty }$, and $\mathcal{A}$ is
also called a noncommutative Hardy space.
\end{definition}

\begin{example}
Let $\mathcal{M}=L^{\infty }(\mathbb{T},\mu )$, and $\tau (f)=\int fd\mu $
for all $f\in L^{\infty }(\mathbb{T},\mu )$. Let $\mathcal{A}=H^{\infty }(%
\mathbb{T},\mu )$, then $\mathcal{D}=H^{\infty }(\mathbb{T},\mu )\cap
H^{\infty }(\mathbb{T},\mu )^{\ast }=\mathbb{C}$. Let $\Phi _{\tau }$ be the
mapping from $L^{\infty }(\mathbb{T},\mu )$ onto $\mathbb{C}$ defined by $%
\Phi _{\tau }(f)=\int fd\mu $. Then $H^{\infty }(\mathbb{T},\mu )$ is a
finite, maximal subdiagonal subalgebra of $L^{\infty }(\mathbb{T},\mu )$.
\end{example}

\begin{example}
Let $\mathcal{M}=\mathcal{M}_{n}(\mathbb{C})$ be with the usual trace $\tau $%
. Let $\mathcal{A}$ be the subalgebra of lower triangular matrices, now $%
\mathcal{D}$ is the diagonal matrices and $\Phi _{\tau }$ is the natural
projection onto the diagonal matrices. Then $\mathcal{A}$ is a finite maximal
subdiagonal subalgebra of $\mathcal{M}_{n}(\mathbb{C})$.
\end{example}

Let $\mathcal{M}$ be a finite von Neumann algebra with a faithful, normal,
tracial state $\tau $, $\Phi _{\tau }$ be the conditional expectation and $%
\alpha $ be a determinant, normalized, unitarily invariant, continuous norm
on $\mathcal{M}$. Let $L^{\alpha }(\mathcal{M},\tau )$ be the $\alpha $
closure of $\mathcal{M}$,i.e., $L^{\alpha }(\mathcal{M},\tau )=[\mathcal{M}%
]_{\alpha }$. Similarly, $H^{\alpha }(\mathcal{M},\tau )=[H^{\infty }(%
\mathcal{M},\tau )]_{\alpha }$, $H_{0}^{\infty }(\mathcal{M},\tau )=\ker
(\Phi _{\tau })\cap H^{\infty }(\mathcal{M},\tau )$ and $H_{0}^{\alpha }(%
\mathcal{M},\tau )=\ker (\Phi _{\tau })\cap H^{\alpha }(\mathcal{M},\tau ).$
If we take $\alpha =\Vert \cdot \Vert _{p}$, then $L^{p}(\mathcal{M},\tau )=[%
\mathcal{M}]_{p}$, $H^{p}(\mathcal{M},\tau )=[H^{\infty }(\mathcal{M},\tau
)]_{p}.$ Recall $\rho $ is a faithful normal tracial state on $\mathcal{M}$
satisfying all three conditions in theorem \ref{cdominating}. We define the
noncommutative Hardy spaces $H^{1}(\mathcal{M},\rho )$ and $H_{0}^{1}(%
\mathcal{M},\rho )$ by $H^{1}(\mathcal{M},\rho )=\overline{H^{\infty }(%
\mathcal{M},\tau )}^{\Vert \cdot \Vert _{1,\rho }}$ and $H_{0}^{1}(\mathcal{M%
},\rho )=\overline{H_{0}^{\infty }(\mathcal{M},\tau )}^{\Vert \cdot \Vert
_{1,\rho }}$. In \cite{Saito}, K. S. Satio characterized the noncommutative
Hardy spaces $H^{p}(\mathcal{M},\tau )$ and $H_{0}^{p}(\mathcal{M},\tau ).$
Recall $H^{p}\left( \mathcal{M},\tau \right) =\left\{ x\in L^{p}(\mathcal{M}%
,\tau )\text{, }\tau \left( xy\right) =0\text{, for all }y\in H_{0}^{\infty
}\right\} $ for $1\leq p<\infty $, also we have $H_{0}^{p}\left( \mathcal{M}%
,\tau \right) =\left\{ x\in L^{p}(\mathcal{M},\tau )\text{, }\tau \left(
xy\right) =0\text{, }\forall y\in H^{\infty }\right\} $. In this paper, we
get similar result for noncommutative Hardy spaces $H^{p}(\mathcal{M},\rho )$
and $H_{0}^{p}(\mathcal{M},\rho )$ by using the inner-outer factorization
and the properties of outer functions in noncommutative Hardy spaces from
papers \cite{B.L.} and \cite{B.L.2}.

\begin{lemma}
\label{phifactorization}\text{(from }\cite{B.L.}\text{)} If $H^{\infty }$ is
a maximal subdiagonal algebra, then $x\in H^{p}(\mathcal{M},\tau )$ with $%
\Delta (x)>0$ iff $x=uy$ for an inner $u$ and a strongly outer $y\in H^{p}(%
\mathcal{M},\tau )$ for $1\leq p\leq \infty $. The factorization is unique
up to a unitary in $\mathcal{D}$.
\end{lemma}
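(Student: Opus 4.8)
The plan is to prove the two implications separately, with the reverse (``$\Leftarrow$'') direction being essentially a determinant computation and the forward (``$\Rightarrow$'') direction carrying all the weight. For the reverse direction, suppose $x=uy$ with $u$ inner and $y\in H^{p}(\mathcal{M},\tau)$ strongly outer. Since $u\in H^{\infty}$ is bounded and $H^{\infty}\cdot H^{p}\subseteq H^{p}$, we get $x\in H^{p}$. Because $u$ is unitary we have $y^{\ast}y=x^{\ast}u u^{\ast} x = x^{\ast}x$, so $|x|=|y|$ and, by the multiplicativity of the Fuglede--Kadison determinant, $\Delta(x)=\Delta(u)\Delta(y)=\Delta(y)>0$ (here $\Delta(u)=\Delta(|u|)=\Delta(I)=1$). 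This already yields $x\in H^{p}$ with $\Delta(x)>0$.

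For the forward direction I would first pass to the cyclic right-invariant subspace $\mathcal{K}=[xH^{\infty}]_{p}\subseteq H^{p}$ and realize the inner factor as the inner generator of $\mathcal{K}$. Concretely, I claim $\Delta(x)>0$ forces $\mathcal{K}=uH^{p}$ for a full inner unitary $u\in H^{\infty}$. Granting the claim, set $y=u^{\ast}x\in H^{p}$; then $|y|=|x|$ gives $\Delta(y)=\Delta(x)>0$, and applying $u^{\ast}$ to $\mathcal{K}=uH^{p}=[xH^{\infty}]_{p}$ gives $[yH^{\infty}]_{p}=u^{\ast}\mathcal{K}=H^{p}$, so in particular $I\in[yH^{p}]_{p}$, i.e. $y$ is outer, hence strongly outer. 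Thus $x=uy$ is the desired factorization.

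The main obstacle is the claim that $\mathcal{K}=uH^{p}$ with $u$ a \emph{full} inner unitary. Here I would invoke the structure theory of $H^{\infty}$-invariant subspaces of $H^{p}$ for the standard $p$-norm (the Beurling--Blecher--Labuschagne theorem, reducing general $p$ to the $H^{2}$ column-module picture if convenient), which decomposes $\mathcal{K}$ into a singular part $\mathcal{Y}=[\mathcal{Y}H_{0}^{\infty}]_{p}$ and a direct sum of analytic pieces $u_{\lambda}H^{p}$. The determinant does the separating work: via Arveson's noncommutative Szeg\H{o}/Jensen formula expressing $\Delta(x)$ as a Szeg\H{o}-type infimum over $H_{0}^{\infty}$, the hypothesis $\Delta(x)>0$ rules out any singular summand (on which $\Delta$ vanishes) and, because $\mathcal{K}$ is cyclic, collapses the analytic part to a single generator; finally $\Delta(x)>0$ forces the range projection of this generator to be $I$, upgrading the generating partial isometry to a full unitary $u\in H^{\infty}$. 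I expect this step --- controlling the type decomposition by the determinant and promoting a partial isometry to a full inner unitary --- to be the heart of the argument, drawing on the multiplicativity, faithfulness, and upper semicontinuity of $\Delta$.

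For uniqueness, suppose $x=u_{1}y_{1}=u_{2}y_{2}$ with $u_{i}$ inner and $y_{i}$ strongly outer. Using the characterization of outerness as $[y_{i}H^{\infty}]_{p}=H^{p}$, we get $[xH^{\infty}]_{p}=u_{i}[y_{i}H^{\infty}]_{p}=u_{i}H^{p}$, so $u_{1}H^{p}=u_{2}H^{p}$. Setting $w=u_{2}^{\ast}u_{1}$, a unitary in $\mathcal{M}$, this forces $wH^{p}=H^{p}$ and likewise $w^{\ast}H^{p}=H^{p}$; since $H^{p}\cap\mathcal{M}=H^{\infty}$ we obtain $w,w^{\ast}\in H^{\infty}$, whence $w\in H^{\infty}\cap(H^{\infty})^{\ast}=\mathcal{D}$. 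Then $u_{1}=u_{2}w$ and $y_{2}=wy_{1}$ with $w$ a unitary in $\mathcal{D}$, which is precisely the asserted uniqueness up to a unitary in $\mathcal{D}$.
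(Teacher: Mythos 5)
The paper does not prove this lemma; it is imported verbatim from the cited Blecher--Labuschagne paper, so there is no in-paper argument to compare against. Your outline follows essentially the same route as that source: the easy direction by multiplicativity of $\Delta$, the hard direction by showing $\Delta(x)>0$ forces the cyclic module $[xH^{\infty}]_{p}$ to equal $uH^{p}$ for a single inner unitary $u$ (via the Beurling-type decomposition plus the Szeg\H{o}/Jensen formula, which kills the part $\mathcal{Y}=[\mathcal{Y}H_{0}^{\infty}]_{p}$ because $\Phi_\tau$ annihilates it and $\Delta(\Phi_\tau(\cdot))$ controls $\Delta$), and uniqueness from $u_{1}H^{p}=u_{2}H^{p}$ together with $H^{p}\cap\mathcal{M}=H^{\infty}$. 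The only place your write-up is a sketch rather than a proof is the central claim itself --- that the determinant hypothesis collapses the analytic part to one summand and promotes the generating partial isometry to a unitary --- which you correctly identify as the heart of the matter but delegate to the structure theory without carrying out the estimate; that is exactly the content of the cited result, so as a blind reconstruction the strategy is sound and consistent with the original.
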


\begin{lemma}
\text{(from }\cite{B.L.2}\text{) }Let $\Phi _{\tau }$ be the conditional
expectation on $\mathcal{M}$. Then $x\in H^{p}(\mathcal{M},\tau )$ is outer
if and only if $\Phi _{\tau }(x)$ is outer in $L^{p}(\mathcal{D})$ and $%
\overline{xH_{0}^{\infty }(\mathcal{M},\tau )}^{\Vert \cdot \Vert _{p,\tau
}}=H_{0}^{p}(\mathcal{M},\tau )$ for $1\leq p\leq \infty $.
\end{lemma}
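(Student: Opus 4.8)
The plan is to reduce everything to the topological splitting $H^{p}(\mathcal{M},\tau)=H^{p}_{0}(\mathcal{M},\tau)\oplus L^{p}(\mathcal{D})$ induced by $\Phi_{\tau}$, together with two algebraic facts: that $\Phi_{\tau}$ is multiplicative on $H^{\infty}$, so that $\Phi_{\tau}(xa)=\Phi_{\tau}(x)\Phi_{\tau}(a)$ for $x\in H^{p}$ and $a\in H^{\infty}$, and that $H^{\infty}H^{\infty}_{0}\subseteq H^{\infty}_{0}$. First I would record that $\Phi_{\tau}$ extends to a contractive idempotent on $L^{p}(\mathcal{M},\tau)$ whose restriction to $H^{p}$ has range $L^{p}(\mathcal{D})$ and kernel $H^{p}_{0}$, so every $y\in H^{p}$ is uniquely $y=(I-\Phi_{\tau})(y)+\Phi_{\tau}(y)$ with the first summand in $H^{p}_{0}$ and the second in $L^{p}(\mathcal{D})$ (for $p=\infty$ the same holds with weak$^{\ast}$ closures and weak$^{\ast}$ continuity of $\Phi_{\tau}$). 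I then isolate an abstract step: if $V\subseteq H^{p}$ is a subspace with $[V]_{p}\supseteq H^{p}_{0}$ and $[\Phi_{\tau}(V)]_{p}=L^{p}(\mathcal{D})$, then $[V]_{p}=H^{p}$. This is immediate from the splitting, since for $v\in V$ one has $\Phi_{\tau}(v)=v-(I-\Phi_{\tau})(v)\in[V]_{p}$, both terms lying in $[V]_{p}$ (the second because $(I-\Phi_{\tau})(v)\in H^{p}_{0}$), whence $L^{p}(\mathcal{D})=[\Phi_{\tau}(V)]_{p}\subseteq[V]_{p}$ as well as $H^{p}_{0}\subseteq[V]_{p}$, so $[V]_{p}\supseteq H^{p}_{0}+L^{p}(\mathcal{D})=H^{p}$.

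For the backward implication I would apply this with $V=xH^{\infty}$. The hypothesis $[xH^{\infty}_{0}]_{p}=H^{p}_{0}$ gives $[V]_{p}\supseteq[xH^{\infty}_{0}]_{p}=H^{p}_{0}$, and multiplicativity gives $\Phi_{\tau}(xH^{\infty})=\Phi_{\tau}(x)\Phi_{\tau}(H^{\infty})=\Phi_{\tau}(x)\mathcal{D}$, so $[\Phi_{\tau}(V)]_{p}=[\Phi_{\tau}(x)\mathcal{D}]_{p}=L^{p}(\mathcal{D})$; the last equality is exactly the outerness of $\Phi_{\tau}(x)$ in $L^{p}(\mathcal{D})$, which (since $\mathcal{D}$ is its own diagonal, so $H^{p}(\mathcal{D})=L^{p}(\mathcal{D})$) amounts to $[\Phi_{\tau}(x)\mathcal{D}]_{p}=L^{p}(\mathcal{D})$. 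The abstract step then yields $[xH^{\infty}]_{p}=H^{p}$, i.e. $x$ is outer.

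For the forward implication, assume $[xH^{\infty}]_{p}=H^{p}$. The first condition follows by applying the continuous surjection $\Phi_{\tau}$: $L^{p}(\mathcal{D})=\Phi_{\tau}(H^{p})\subseteq[\Phi_{\tau}(xH^{\infty})]_{p}=[\Phi_{\tau}(x)\mathcal{D}]_{p}$, so $\Phi_{\tau}(x)$ is outer. For the second condition, the inclusion $[xH^{\infty}_{0}]_{p}\subseteq H^{p}_{0}$ is routine: approximate $x$ by elements of $H^{\infty}$ and use $H^{\infty}H^{\infty}_{0}\subseteq H^{\infty}_{0}$ together with continuity of right multiplication. For the reverse inclusion I would fix $h\in H^{\infty}_{0}$ and use that right multiplication by $h$ is bounded on $L^{p}$: from $[xH^{\infty}]_{p}=H^{p}$ one gets $H^{p}h\subseteq[xH^{\infty}h]_{p}\subseteq[xH^{\infty}_{0}]_{p}$, the last inclusion because $H^{\infty}h\subseteq H^{\infty}H^{\infty}_{0}\subseteq H^{\infty}_{0}$. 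Letting $h$ range over $H^{\infty}_{0}$ and passing to closed spans gives $[xH^{\infty}_{0}]_{p}\supseteq[H^{p}H^{\infty}_{0}]_{p}=H^{p}_{0}$, where the final equality holds since $H^{\infty}_{0}\subseteq H^{p}H^{\infty}_{0}$ (take the $H^{p}$-factor to be $I$) while $H^{p}H^{\infty}_{0}\subseteq H^{p}_{0}$. This closes both directions.

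I expect the genuine work to lie not in these manipulations but in the infrastructure I am citing: that $\Phi_{\tau}$ extends to a bounded idempotent of $L^{p}(\mathcal{M},\tau)$ realizing the topological splitting $H^{p}=H^{p}_{0}\oplus L^{p}(\mathcal{D})$ with $\ker\Phi_{\tau}|_{H^{p}}=H^{p}_{0}$; that multiplicativity of $\Phi_{\tau}$ and the module identity pass from $H^{\infty}$ to the $L^{p}$-level with the products interpreted and continuous in $L^{p}$; and that the whole scheme survives the passage to $p=\infty$, with norm closures replaced by weak$^{\ast}$ closures. These facts rest on the Arveson--Exel--Labuschagne theory of subdiagonal algebras and on the determinant bookkeeping behind Lemma \ref{phifactorization}; verifying them uniformly in $p$, rather than the density arguments above, is the main obstacle.
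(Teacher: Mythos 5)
The paper offers no proof of this lemma at all---it is imported verbatim from \cite{B.L.2}---so the only comparison available is with that source, and your argument is essentially the one given there: the contractive splitting $H^{p}=H^{p}_{0}\oplus L^{p}(\mathcal{D})$ induced by the extension of $\Phi_{\tau}$, together with multiplicativity of $\Phi_{\tau}$ on products $H^{p}\cdot H^{\infty}$ (Lemma \ref{conexpt} of this paper, from Bekjan--Xu), reduces both implications to exactly the density manipulations you carry out, and those manipulations are sound. Two points worth pinning down beyond the infrastructure you already flag: first, this paper defines outer by $I\in[xH^{p}(\mathcal{M},\tau)]_{p}$ whereas your argument, like \cite{B.L.2}, proves and uses $[xH^{\infty}]_{p}=H^{p}$; the latter implies the former, but your forward implication starts from the $H^{\infty}$ form, so you should either adopt that as the definition (as the cited source does) or supply the one-line reduction; second, for $p=\infty$ every closure in the statement and in your splitting, including $\overline{xH^{\infty}_{0}}$, must be read as a weak* closure, which quietly replaces norm-continuity of $\Phi_{\tau}$ and of right multiplication by weak*-continuity in each step.
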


\begin{lemma}
\label{outerh}If $\alpha \in N_{\Delta }(\mathcal{M},\tau ),$ then there
exits a faithful tracial state $\rho $ and a strongly outer $h$ in $H^{1}(%
\mathcal{M},\tau )$ such that $g=|h|$ and $hH^{1}(\mathcal{M},\rho )=H^{1}(%
\mathcal{M},\tau )$.
\end{lemma}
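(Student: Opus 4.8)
The plan is to read off the central weight from the previous theorem, construct $h$ by a noncommutative outer factorization, and then transport $H^{1}(\mathcal{M},\rho)$ onto $H^{1}(\mathcal{M},\tau)$ by left multiplication by $h$. First, since $\alpha \in N_{\Delta}(\mathcal{M},\tau)$, Theorem \ref{cdominating} supplies a positive $g \in L^{1}(\mathcal{Z},\tau)$ (so that $g$ is \emph{central}) with $\Delta(g) > 0$ and such that $\rho(\cdot) = \tau(\cdot g)$ is a faithful normal tracial state; note $\|x\|_{1,\rho} = \rho(|x|) = \tau(|x|g)$. The centrality of $g$ is the structural fact I would lean on throughout.

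Next I would produce $h$. Because $g \geq 0$, $g \in L^{1}(\mathcal{M},\tau)$ and $\Delta(g) > 0$, the noncommutative outer factorization of a positive element with positive Fuglede--Kadison determinant (the factorization theory of \cite{B.L.} and \cite{Bekjan}) yields an $h \in H^{1}(\mathcal{M},\tau)$ with $|h| = g$ that is outer. Since $\Delta(h) = \Delta(|h|) = \Delta(g) > 0$, this $h$ is automatically strongly outer, which settles that half of the claim. If the available factorization only produces \emph{some} $x \in H^{1}(\mathcal{M},\tau)$ with $|x| = g$, then $\Delta(x) = \Delta(g) > 0$ and Lemma \ref{phifactorization} writes $x = uy$ with $u$ inner and $y$ strongly outer; as $u$ is unitary, $|y| = |x| = g$, so $h := y$ does the job.

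For the identity $h H^{1}(\mathcal{M},\rho) = H^{1}(\mathcal{M},\tau)$ I would study left multiplication $M_{h}\colon x \mapsto hx$. For $x \in H^{\infty}$, centrality of $g$ gives $|hx|^{2} = x^{\ast}h^{\ast}hx = x^{\ast}g^{2}x = g^{2}x^{\ast}x$, hence $|hx| = g|x|$ and therefore $\|hx\|_{1,\tau} = \tau(g|x|) = \rho(|x|) = \|x\|_{1,\rho}$. Thus $M_{h}$ is isometric on $(H^{\infty},\|\cdot\|_{1,\rho})$ and extends to an isometry $H^{1}(\mathcal{M},\rho) \to H^{1}(\mathcal{M},\tau)$; a convergence-in-measure argument identifies the extension with the genuine product $x \mapsto hx$, so its range is exactly $h H^{1}(\mathcal{M},\rho)$, and being the range of an isometry it is closed.

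Surjectivity is where outerness is used. Since $h$ is outer, $I \in [hH^{\infty}]_{1,\tau}$; picking $b_{n} \in H^{\infty}$ with $\|hb_{n} - I\|_{1,\tau} \to 0$ and multiplying on the right by an arbitrary $a \in H^{\infty}$ (right multiplication being $\|\cdot\|_{1,\tau}$-bounded by $\|a\|_{\infty}$) gives $a \in [hH^{\infty}]_{1,\tau}$, so $H^{\infty} \subseteq [hH^{\infty}]_{1,\tau}$ and, on taking closures, $[hH^{\infty}]_{1,\tau} = H^{1}(\mathcal{M},\tau)$. Since $hH^{\infty} \subseteq h H^{1}(\mathcal{M},\rho)$ and the latter is closed, we get $H^{1}(\mathcal{M},\tau) = [hH^{\infty}]_{1,\tau} \subseteq h H^{1}(\mathcal{M},\rho) \subseteq H^{1}(\mathcal{M},\tau)$, forcing equality. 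I expect the genuine obstacle to be the construction of $h$ with \emph{exactly} $|h| = g$ --- the outer-function step for a positive $L^{1}$-element that does not itself lie in the Hardy space --- while the isometry and the passage from outerness to surjectivity are comparatively routine once $g$ is known to be central.
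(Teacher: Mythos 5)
Your proposal is correct and follows essentially the same route as the paper: obtain the central $g$ and the state $\rho$ from Theorem \ref{cdominating}, factor $g=|h|$ with $h$ strongly outer via Lemma \ref{phifactorization} (Corollary 4.17 of \cite{B.L.}), and show that left multiplication by $h$ is an isometry of $H^{1}(\mathcal{M},\rho)$ onto $H^{1}(\mathcal{M},\tau)$ using the centrality of $g$ together with the outerness of $h$ for surjectivity. The paper's surjectivity step is terser than yours but identical in substance, so no further comparison is needed.
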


\begin{proof}
Since $\alpha \in N_{\Delta }(\mathcal{M},\tau ),$ $\Delta (g)>0$. By lemma %
\ref{phifactorization}, $g=|h|$ for a strongly outer $h\in H^{1}(\mathcal{M}%
,\tau )$. Let $\rho \left( x\right) =\tau \left( xg\right) $, $\forall x\in 
\mathcal{M}$, by theorem \ref{cdominating}, $\rho $ is a faithful normal
tracial state on $\mathcal{M}$. Then we define $U:L^{1}(\mathcal{M},\rho
)\longrightarrow L^{1}(\mathcal{M},\tau )$ by $Ux=hx$, which is a surjective
isometry:%
\begin{equation*}
\left\Vert U(x)\right\Vert _{1,\tau }=\left\Vert xg\right\Vert _{1,\tau
}=\tau (\left\vert xg\right\vert )=\tau (\left\vert x\right\vert g)=\rho
(\left\vert x\right\vert )=\left\Vert x\right\Vert _{1,\rho }.
\end{equation*}%
Since $g\in gH^{1}(\mathcal{M},\rho )$ and $H^{1}(\mathcal{M},\tau
)\subseteq H^{1}(\mathcal{M},\rho )$, $gH^{\infty }(\mathcal{M},\tau
)\subseteq gH^{1}(\mathcal{M},\rho )$, i.e. $vhH^{\infty }(\mathcal{M},\tau
)\subseteq gH^{1}(\mathcal{M},\rho )=vhH^{1}(\mathcal{M},\rho )=hH^{1}(%
\mathcal{M},\rho ).$ Since $h$ is a strongly outer in $H^{1}(\mathcal{M}%
,\tau )$, $hH^{1}(\mathcal{M},\rho )=H^{1}(\mathcal{M},\tau ).$
\end{proof}

\begin{corollary}
\label{H(0,1)}Let $\Phi _{\tau }$ be the conditional expectation on $%
\mathcal{M}$. If $\alpha \in N_{\Delta }(\mathcal{M},\tau ),$ then there
exists a faithful normal tracial state $\rho $ such that\newline
(1) $H^{1}(\mathcal{M},\rho )=\{x\in L^{1}(\mathcal{M},\rho ):\rho (xy)=0$
for all $y\in H_{0}^{\infty }\}$,\newline
(2) $H_{0}^{1}(\mathcal{M},\rho )=\{x\in L^{1}(\mathcal{M},\rho ):\rho (xy)=0
$ for all $y\in H^{\infty }\}$,\newline
(3) $H_{0}^{1}(\mathcal{M},\rho )=\{x\in H^{1}(\mathcal{M},\rho ):\Phi
_{\tau }(xh)=0\}$.
\end{corollary}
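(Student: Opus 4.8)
The plan is to transport K.\,S.\ Saito's characterizations of $H^{1}(\mathcal{M},\tau)$ and $H_{0}^{1}(\mathcal{M},\tau)$ recalled above to the weight $\rho$ by means of the surjective isometry of Lemma~\ref{outerh}. Write $U\colon L^{1}(\mathcal{M},\rho)\to L^{1}(\mathcal{M},\tau)$, $Ux=hx$, where $h\in H^{1}(\mathcal{M},\tau)$ is the strongly outer element with $g=|h|$ produced in Lemma~\ref{outerh}; that lemma already gives $U\bigl(H^{1}(\mathcal{M},\rho)\bigr)=hH^{1}(\mathcal{M},\rho)=H^{1}(\mathcal{M},\tau)$. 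I would first record the companion identity $U\bigl(H_{0}^{1}(\mathcal{M},\rho)\bigr)=H_{0}^{1}(\mathcal{M},\tau)$: since $U$ is isometric and $H_{0}^{1}(\mathcal{M},\rho)=\overline{H_{0}^{\infty}}^{\,\Vert\cdot\Vert_{1,\rho}}$, we get $U\bigl(H_{0}^{1}(\mathcal{M},\rho)\bigr)=\overline{hH_{0}^{\infty}}^{\,\Vert\cdot\Vert_{1,\tau}}$, and this equals $H_{0}^{1}(\mathcal{M},\tau)$ because $h$ is outer, by the lemma of \cite{B.L.2} quoted above. As $\Delta(h)>0$, $h$ is invertible in $\widetilde{\mathcal{M}}$, so $U$ is injective; hence for $x\in L^{1}(\mathcal{M},\rho)$ one has $x\in H^{1}(\mathcal{M},\rho)\iff hx\in H^{1}(\mathcal{M},\tau)$ and $x\in H_{0}^{1}(\mathcal{M},\rho)\iff hx\in H_{0}^{1}(\mathcal{M},\tau)$.

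With these equivalences in hand, (3) is the most direct. First I would note that Saito's two descriptions give $H_{0}^{1}(\mathcal{M},\tau)=\{w\in H^{1}(\mathcal{M},\tau):\Phi_{\tau}(w)=0\}$: indeed $H^{\infty}=H_{0}^{\infty}+\mathcal{D}$, so a $w\in H^{1}(\mathcal{M},\tau)$ lies in $H_{0}^{1}(\mathcal{M},\tau)$ exactly when $\tau(wd)=0$ for all $d\in\mathcal{D}$, and since $\tau(wd)=\tau(\Phi_{\tau}(w)d)$ and $\tau$ is faithful on $\mathcal{D}$ this says $\Phi_{\tau}(w)=0$. Applying this to $w=hx$ and using the equivalences above yields $x\in H_{0}^{1}(\mathcal{M},\rho)\iff x\in H^{1}(\mathcal{M},\rho)$ and $\Phi_{\tau}(hx)=0$, which is (3) (up to the order of the product $xh$ versus $hx$, a matter of the side on which the isometry is taken).

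For (1) and (2) I would again push Saito's predual descriptions through $U$. Thus $x\in H^{1}(\mathcal{M},\rho)\iff hx\in H^{1}(\mathcal{M},\tau)\iff\tau\bigl(hx\,y\bigr)=0$ for all $y\in H_{0}^{\infty}$, and symmetrically with $H^{\infty}$ in place of $H_{0}^{\infty}$ for (2). The remaining, and I expect genuinely delicate, step is to rewrite the pairing $\tau(hxy)$ that emerges from the isometry as the stated condition on $\rho$. Note that the pairing naturally produced by $U$ is the $h$-twisted one $\tau(hxy)$, which is consistent with the explicit appearance of $h$ in (3); converting it into a condition phrased through $\rho(\cdot)=\tau(\cdot\,g)$ forces one to compare $h$ with $g=|h|$. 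Writing the polar decomposition $h=v|h|=vg$ and using the centrality of $g$ from Theorem~\ref{cdominating}, the obstruction is precisely that the phase $v$ need not lie in $H^{\infty}$, so right translation by $v$ need not preserve the annihilator of $H_{0}^{\infty}$ (resp.\ $H^{\infty}$). This identification of the two pairings is the main obstacle, and it is the point at which I would verify the exact form of the annihilator conditions in (1) and (2) most carefully.
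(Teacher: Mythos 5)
Your route is the same as the paper's: transport Saito's characterizations of $H^{1}(\mathcal{M},\tau)$ and $H_{0}^{1}(\mathcal{M},\tau)$ through the isometry $Ux=hx$ of Lemma \ref{outerh}, record the companion identity $hH_{0}^{1}(\mathcal{M},\rho)=H_{0}^{1}(\mathcal{M},\tau)$ (which is exactly what the paper verifies for part (2)), and read off (3) from the description of $H_{0}^{1}(\mathcal{M},\tau)$ via $\Phi_{\tau}$. Those portions of your argument are sound and coincide with the paper's.

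However, the step you yourself flag as ``the main obstacle'' is a genuine gap, and your proposal does not close it. What the isometry actually yields is $H^{1}(\mathcal{M},\rho)=\{x\in L^{1}(\mathcal{M},\rho): hx\in H^{1}(\mathcal{M},\tau)\}=\{x:\tau(hxy)=0 \text{ for all } y\in H_{0}^{\infty}\}$, whereas (1) asserts the annihilator condition for the untwisted pairing $\rho(xy)=\tau(gxy)$ ($g$ being central). Writing $h=vg$ with $v$ unitary, the first set is $\{x: gx\in v^{\ast}H^{1}(\mathcal{M},\tau)\}$ and the second is $\{x: gx\in H^{1}(\mathcal{M},\tau)\}$, so they agree only when multiplication by $v$ preserves $H^{1}(\mathcal{M},\tau)$. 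This already fails for $\mathcal{M}=L^{\infty}(\mathbb{T})$ with a nonconstant weight $g$: there $\overline{H^{\infty}}^{L^{1}(g\,dm)}=q^{-1}H^{1}\cap L^{1}(g\,dm)$ with $q$ outer and $|q|=g$, while the annihilator of $H_{0}^{\infty}$ under $(f,y)\mapsto\int fyg\,dm$ is $g^{-1}H^{1}\cap L^{1}(g\,dm)$, and the two differ by the phase $q/|q|$, which is not inner in general. So as written your argument proves (1) and (2) only in the $h$-twisted form (the form in which (3) is in fact stated, via $\Phi_{\tau}(xh)$), not in the form claimed. You should be aware that the paper's own proof of (1) is a one-line appeal to Lemma \ref{outerh} and Saito and is silent on precisely this identification, so you have located a real weakness in the argument rather than merely left out a routine verification; to finish along these lines you would need either an additional hypothesis forcing the phase $v$ to be inner with inner inverse, or a restatement of (1)--(2) using the pairing $\tau(hxy)$.
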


\begin{proof}
Since $\alpha \in N_{\Delta }(\mathcal{M},\tau ),$ there exists a positive $%
g\in L^{1}\left( \mathcal{M},\tau \right) $ and $\Delta \left( g\right) >0$
such that $\alpha \left( x\right) \geq c\tau \left( \left\vert x\right\vert
g\right) $ for some $c>0.$ We define $\rho \left( x\right) =\tau \left(
xg\right) ,\forall x\in \mathcal{M}$, $\rho $ is a faithful normal tracial
state on $\mathcal{M}.$ By lemma \ref{outerh} and $H^{1}\left( \mathcal{M}%
,\tau \right) =\left\{ x\in L^{1}(\mathcal{M},\tau ),\tau \left( xy\right) =0%
\text{ for all }y\in H_{0}^{\infty }\right\}$, we have (1). For (2), We
know $\overline{H_{0}^{\infty }(\mathcal{M},\tau )}^{\Vert \cdot \Vert
_{1,\rho }}=H_{0}^{1}(\mathcal{M},\rho )$, and $hH_{0}^{1}(\mathcal{M},\rho )=h%
\overline{H_{0}^{\infty }(\mathcal{M},\tau )}^{\Vert \cdot \Vert _{1,\rho }}=%
\overline{hH_{0}^{\infty }(\mathcal{M},\tau )}^{\Vert \cdot \Vert _{1,\tau
}}=H_{0}^{1}(\mathcal{M},\tau )$ since $h$ is outer in $H^{1}(\mathcal{M}%
,\tau )$. The last statement is clearly by \cite{Saito}.
\end{proof}

\begin{proposition}
If $\alpha \in N_{\Delta }(\mathcal{M},\tau ),$ then there exists a faithful
normal tracial state $\rho $ such that%
\begin{equation*}
H^{\alpha }(\mathcal{M},\rho )=\{x\in L^{\alpha }(\mathcal{M},\rho ):\rho
(xy)=0\text{ for all }y\in H_{0}^{1}(\mathcal{M},\rho )\cap (L^{\alpha }(%
\mathcal{M},\rho ))^{\#}\},
\end{equation*}%
where $(L^{\alpha }(\mathcal{M},\rho ))^{\#}$ is the dual space of $%
L^{\alpha }(\mathcal{M},\rho ).$
\end{proposition}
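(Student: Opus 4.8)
The plan is to characterize $H^{\alpha}(\mathcal{M},\rho)$ via a duality pairing, using the identification of $H^1$ as an annihilator established in Corollary \ref{H(0,1)}. The duality $(L^{\alpha}(\mathcal{M},\rho))^{\#}$ pairs with $L^{\alpha}(\mathcal{M},\rho)$ through $(x,y)\mapsto\rho(xy)$, and since $\alpha$ is $c\|\cdot\|_{1,\rho}$-dominating by Theorem \ref{cdominating}, we have $L^{\alpha}(\mathcal{M},\rho)\subseteq L^1(\mathcal{M},\rho)$ continuously, so this pairing is well-defined. Write $\mathcal{K}$ for the right-hand side of the claimed identity. I would prove the two inclusions separately.

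First I would show $H^{\alpha}(\mathcal{M},\rho)\subseteq\mathcal{K}$. Take $x\in H^{\alpha}(\mathcal{M},\rho)=[H^{\infty}]_{\alpha}$, so there are $x_n\in H^{\infty}$ with $\alpha(x-x_n)\to 0$. For any $y\in H_0^1(\mathcal{M},\rho)\cap(L^{\alpha}(\mathcal{M},\rho))^{\#}$, part (2) of Corollary \ref{H(0,1)} gives $\rho(x_n y)=0$ since $x_n\in H^{\infty}$ and $y\in H_0^1(\mathcal{M},\rho)$. Because $y\in(L^{\alpha})^{\#}$, the functional $z\mapsto\rho(zy)$ is $\alpha$-continuous, so $\rho(xy)=\lim_n\rho(x_n y)=0$. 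Hence $x\in\mathcal{K}$.

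The reverse inclusion $\mathcal{K}\subseteq H^{\alpha}(\mathcal{M},\rho)$ is the main obstacle and will require a Hahn-Banach separation argument. Suppose $x\in L^{\alpha}(\mathcal{M},\rho)$ but $x\notin H^{\alpha}(\mathcal{M},\rho)$. Since $H^{\alpha}(\mathcal{M},\rho)$ is a closed subspace, Hahn-Banach produces a functional $\Lambda\in(L^{\alpha}(\mathcal{M},\rho))^{\#}$ with $\Lambda|_{H^{\alpha}(\mathcal{M},\rho)}=0$ and $\Lambda(x)\neq 0$. The crux is to represent $\Lambda$ as $\Lambda(z)=\rho(zy)$ for some $y\in(L^{\alpha}(\mathcal{M},\rho))^{\#}$, and then to argue that the annihilation of $H^{\infty}\subseteq H^{\alpha}(\mathcal{M},\rho)$ forces $y\in H_0^1(\mathcal{M},\rho)$ via part (2) of Corollary \ref{H(0,1)}. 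For this representation I would use that $\alpha\in N(\mathcal{M},\rho)$ is a continuous gauge norm so that $(L^{\alpha})^{\#}$ is itself a function-space-type dual implemented by $\rho$-pairing with elements of the associate norm space sitting inside $L^1(\mathcal{M},\rho)$; this is where I expect to lean on the associate (Köthe) duality for symmetric norms and the density of $\mathcal{M}$ in $L^{\alpha}(\mathcal{M},\rho)$.

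Granting such a $y$ with $\Lambda(\cdot)=\rho(\cdot\,y)$, the vanishing on $H^{\infty}$ gives $\rho(ay)=0$ for all $a\in H^{\infty}$, so by Corollary \ref{H(0,1)}(2) we get $y\in H_0^1(\mathcal{M},\rho)$; combined with $y\in(L^{\alpha})^{\#}$ this places $y\in H_0^1(\mathcal{M},\rho)\cap(L^{\alpha}(\mathcal{M},\rho))^{\#}$. But $x\in\mathcal{K}$ means $\rho(xy)=0$ for exactly such $y$, contradicting $\Lambda(x)=\rho(xy)\neq 0$. Therefore no separating functional exists and $x\in H^{\alpha}(\mathcal{M},\rho)$, completing the proof. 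I anticipate the delicate point to be verifying that every $\alpha$-continuous functional is realized by $\rho$-pairing with an element of the predual-type associate space, rather than merely an abstract dual element; the $\|\cdot\|_{1,\rho}$-domination and continuity of $\alpha$ should supply the integrability needed to carry this out.
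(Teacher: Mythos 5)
Your proposal is correct and follows essentially the same route as the paper: both directions are handled exactly as in the paper's proof, with the forward inclusion via $\alpha$-continuity of the pairing (the paper phrases this as showing the right-hand side is $\alpha$-closed and contains $H^\infty$) and the reverse inclusion via Hahn--Banach separation plus representation of the separating functional as $\rho(\cdot\,\xi)$ with $\xi\in H_0^1(\mathcal{M},\rho)\cap(L^\alpha(\mathcal{M},\rho))^{\#}$. The "delicate point" you flag — realizing every $\alpha$-continuous functional by $\rho$-pairing — is handled in the paper by citing the Chen--Hadwin--Shen duality result for $\|\cdot\|_{1,\rho}$-dominating norms, which is available here precisely because Theorem \ref{cdominating} puts $\alpha$ in $N_1(\mathcal{M},\rho)$.
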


\begin{proof}
Since $\alpha \in N_{\Delta }(\mathcal{M},\tau ),$ then there exists a
faithful normal tracial state $\rho $ on $\mathcal{M}$ such that $\rho
\left( x\right) =\tau \left( xg\right) $ for some positive $g\in L^{1}\left( 
\mathcal{Z},\tau \right) $ and the determinant of $g$ is positive, which
means $\alpha \in N_{1}(\mathcal{M},\rho )$. Let $\mathcal{J}=\{x\in
L^{\alpha }(\mathcal{M},\rho ):\rho (xy)=0$ for all $y\in H_{0}^{1}(\mathcal{%
M},\rho )\cap (L^{\alpha }(\mathcal{M},\rho ))^{\#}\}$. Suppose $x\in
H^{\infty }$. If $y\in H_{0}^{1}(\mathcal{M},\rho )\cap (L^{\alpha }(%
\mathcal{M},\rho ))^{\#}\subseteq H_{0}^{1}(\mathcal{M},\rho )$, then it
follows from corollary \ref{H(0,1)} that $\rho (xy)=0$, for all $x\in \mathcal{%
J}$, and so $H^{\infty }\subseteq \mathcal{J}$. We claim that $\mathcal{J}$
is $\alpha $-closed in $L^{\alpha }(\mathcal{M},\rho )$. In fact, suppose $%
\{x_{n}\}$ is a sequence in $\mathcal{J}$ and $x\in L^{\alpha }(\mathcal{M}%
,\rho )$ such that $\alpha (x_{n}-x)\rightarrow 0$. If $y\in H_{0}^{1}(%
\mathcal{M},\rho )\cap (L^{\alpha }(\mathcal{M},\rho ))^{\#}$, then by the
generalized Holder's inequality, we have%
\begin{equation*}
|\rho (xy)-\rho (x_{n}y)|=|\rho ((x-x_{n})y)|\leq \alpha (x-x_{n})\alpha
^{\prime }\rightarrow 0.
\end{equation*}%
Which follows that $\rho (xy)=\underset{n\rightarrow \infty }{\lim }\rho
(x_{n}y)=0$ for all $y\in H_{0}^{1}(\mathcal{M},\rho )\cap (L^{\alpha }(%
\mathcal{M},\rho ))^{\#}$. By the definition of $\mathcal{J}$, we know $x\in 
\mathcal{J}$. Hence $\mathcal{J}$ is closed in $L^{\alpha }(\mathcal{M},\rho
)$. Therefore, $H^{\alpha }(\mathcal{M},\rho )=[H^{\infty }]_{\alpha
}\subseteq \mathcal{J}$.\newline
Next, we show that $H^{\alpha }(\mathcal{M},\rho )=\mathcal{J}$. Assume, via
contradiction, that $H^{\alpha }(\mathcal{M},\rho )\subsetneq \mathcal{J}%
\subseteq L^{\alpha }(\mathcal{M},\rho )$. By the Hahn-Banach theorem, there
is a linear functional $\phi \in (L^{\alpha }(\mathcal{M},\rho ))^{\#}$ and $%
x\in \mathcal{J}$ such that\newline
(a) $\phi (x)\neq 0$,\newline
(b) $\phi (y)=0$ for all $y\in H^{\alpha }(\mathcal{M},\rho )$.\newline
In the beginning of this proof, we know $\alpha \in N_{1}(\mathcal{M},\rho )$%
, which means $\alpha $ is normalized, unitarily invariant $\Vert \cdot
\Vert _{1}$-dominating, continuous norm on $(\mathcal{M},\rho )$, it follows
from \cite{Chen} that there exists a $\xi \in (L^{\alpha }(\mathcal{M},\rho
))^{\#}$ such that\newline
(c) $\phi (z)=\rho (z\xi )$ for all $z\in L^{\alpha }(\mathcal{M},\rho )$.%
\newline
Hence from (b) and (c) we can conclude that\newline
(d) $\rho (y\xi )=\phi (y)=0$ for every $y\in H^{\infty }\subseteq H^{\alpha
}(\mathcal{M},\rho )\subseteq L^{\alpha }(\mathcal{M},\rho )$.\newline
Since $\phi \in (L^{\alpha }(\mathcal{M},\rho ))^{\#}\subseteq L^{1}(%
\mathcal{M},\rho )$, so $\xi \in H_{0}^{1}(\mathcal{M},\rho )$, which means $%
\xi \in H_{0}^{1}(\mathcal{M},\rho )\cap (L^{\alpha }(\mathcal{M},\rho
))^{\#}$. Combining with the fact that $x\in \mathcal{J}=\{x\in L^{\alpha }(%
\mathcal{M},\rho ):\rho (xy)=0$, $\forall y\in H_{0}^{1}(\mathcal{M},\rho
)\cap (L^{\alpha }(\mathcal{M},\rho ))^{\#}\}$, we obtain that $\rho (x\xi
)=0$. Note, again, that $x\in \mathcal{J}\subseteq L^{\alpha }(\mathcal{M}%
,\rho )$. From (a) and (c), it follows that $\rho (x\xi )=\phi (x)\neq 0$.
This is a contradiction. Therefore\newline
$H^{\alpha }(\mathcal{M},\rho )=\{x\in L^{\alpha }(\mathcal{M},\rho ):\rho
(xy=0)$ for all $y\in H_{0}^{1}(\mathcal{M},\rho )\cap (L^{\alpha }(\mathcal{%
M},\rho ))^{\#}\}$.
\end{proof}

\begin{lemma}
\label{conexpt}\text{(from }\cite{Bekjan}\text{) }The conditional
expectation $\Phi _{\tau }$ is multiplicative on Hardy spaces. More
precisely, $\Phi _{\tau }(xy)=\Phi _{\tau }(x)\Phi _{\tau }(y)$ for $x\in
H^{p}(\mathcal{M},\tau )$, $y\in H^{q}(\mathcal{M},\tau )$ and $xy\in H^{r}(%
\mathcal{M},\tau )$ with $0<p,q,r<\infty $ and $\frac{1}{p}+\frac{1}{q}=%
\frac{1}{r}$.
\end{lemma}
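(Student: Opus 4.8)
The plan is to prove the identity first on the dense analytic subalgebra and then pass to the limit, exploiting that $H^{\infty}$ is an algebra on which $\Phi_{\tau}$ is already multiplicative. Since by definition $H^{p}(\mathcal{M},\tau)=[H^{\infty}]_{p}$, I would fix $x\in H^{p}(\mathcal{M},\tau)$ and $y\in H^{q}(\mathcal{M},\tau)$ and choose sequences $x_{n},y_{n}\in H^{\infty}$ with $\Vert x_{n}-x\Vert_{p}\to 0$ and $\Vert y_{n}-y\Vert_{q}\to 0$. Because $H^{\infty}$ is a unital algebra, $x_{n}y_{n}\in H^{\infty}$, and the defining property (2) of a subdiagonal algebra gives $\Phi_{\tau}(x_{n}y_{n})=\Phi_{\tau}(x_{n})\Phi_{\tau}(y_{n})$. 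The whole argument then reduces to showing that each side of this identity converges, in $\Vert\cdot\Vert_{r}$, to the corresponding expression built from $x$ and $y$.

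For the left-hand side I would use the noncommutative H\"older inequality $\Vert ab\Vert_{r}\le\Vert a\Vert_{p}\Vert b\Vert_{q}$, valid for all $0<p,q,r<\infty$ with $\tfrac1p+\tfrac1q=\tfrac1r$, to estimate
\[
\Vert x_{n}y_{n}-xy\Vert_{r}\le\Vert x_{n}\Vert_{p}\Vert y_{n}-y\Vert_{q}+\Vert x_{n}-x\Vert_{p}\Vert y\Vert_{q}\longrightarrow 0,
\]
so that $x_{n}y_{n}\to xy$ in $H^{r}(\mathcal{M},\tau)$; continuity of $\Phi_{\tau}$ on $H^{r}$ then yields $\Phi_{\tau}(x_{n}y_{n})\to\Phi_{\tau}(xy)$. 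For the right-hand side, continuity of $\Phi_{\tau}$ on $H^{p}$ and $H^{q}$ gives $\Phi_{\tau}(x_{n})\to\Phi_{\tau}(x)$ in $\Vert\cdot\Vert_{p}$ and $\Phi_{\tau}(y_{n})\to\Phi_{\tau}(y)$ in $\Vert\cdot\Vert_{q}$; since these all lie in $L^{p}(\mathcal{D})$ and $L^{q}(\mathcal{D})$ respectively, a second application of H\"older gives $\Phi_{\tau}(x_{n})\Phi_{\tau}(y_{n})\to\Phi_{\tau}(x)\Phi_{\tau}(y)$ in $\Vert\cdot\Vert_{r}$. Taking limits in $\Phi_{\tau}(x_{n}y_{n})=\Phi_{\tau}(x_{n})\Phi_{\tau}(y_{n})$ then produces the desired $\Phi_{\tau}(xy)=\Phi_{\tau}(x)\Phi_{\tau}(y)$.

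The crux, and the one genuinely nontrivial input, is the continuity (indeed contractivity) of $\Phi_{\tau}$ on $H^{s}(\mathcal{M},\tau)$ for every $0<s<\infty$, i.e. the bound $\Vert\Phi_{\tau}(w)\Vert_{s}\le\Vert w\Vert_{s}$ for $w\in H^{\infty}$, which is what legitimizes every limit above. For $s\ge 1$ this is immediate, since $\Phi_{\tau}$ is a trace-preserving conditional expectation and hence a contraction on all of $L^{s}(\mathcal{M},\tau)$. The delicate range is $0<s<1$: here $\Phi_{\tau}$ is \emph{not} bounded on $L^{s}(\mathcal{M},\tau)$ (already in the commutative model the averaging map fails to be bounded on $L^{s}$ of a probability space when $s<1$), so the estimate can only survive on the analytic subspace $H^{s}$. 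This is precisely the noncommutative analogue of the fact that $|f|^{s}$ is subharmonic for every $s>0$ when $f$ is analytic, giving $|f(0)|\le\Vert f\Vert_{H^{s}}$; in the subdiagonal setting it is the Jensen-type inequality for $\Phi_{\tau}$ on maximal subdiagonal algebras. I would therefore invoke this inequality from the cited work, which supplies the bounded extension of $\Phi_{\tau}$ from $H^{\infty}$ to each $H^{s}$ and thus closes the argument.
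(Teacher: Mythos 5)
The paper gives no proof of this lemma; it is imported verbatim from Bekjan--Xu \cite{Bekjan} (their Theorem 2.1), so there is no internal argument to compare against. Your outline essentially reconstructs the proof from that reference: multiplicativity on the weak*-dense algebra $H^{\infty}$, then density plus the noncommutative H\"older inequality, with all the weight resting on the contractivity $\Vert \Phi_{\tau}(w)\Vert_{s}\le \Vert w\Vert_{s}$ for $w\in H^{\infty}$ and $0<s<1$; you have correctly isolated that as the crux and correctly explained why it cannot come from $L^{s}$-boundedness of $\Phi_{\tau}$. Two caveats. First, a technical slip: for $r<1$ the expression $\Vert\cdot\Vert_{r}$ is only a quasi-norm, so the displayed estimate $\Vert x_{n}y_{n}-xy\Vert_{r}\le \Vert x_{n}\Vert_{p}\Vert y_{n}-y\Vert_{q}+\Vert x_{n}-x\Vert_{p}\Vert y\Vert_{q}$ is not legitimate as written; you should instead use the $r$-triangle inequality $\Vert a+b\Vert_{r}^{r}\le \Vert a\Vert_{r}^{r}+\Vert b\Vert_{r}^{r}$ and apply H\"older to each summand, which gives the same conclusion. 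Second, the one genuinely nontrivial input is left as a citation. That is defensible here, since the paper itself cites the entire lemma, but note that in Bekjan--Xu the bound $\Vert\Phi_{\tau}(w)\Vert_{s}\le\Vert w\Vert_{s}$ for $s<1$ is not an independent black box: it is proved by a Szeg\H{o}/Riesz factorization argument (write $w=cb$ with $\Vert c\Vert\le 1$ and $b\in H^{\infty}$ invertible with $b^{*}b=|w|^{2}+\varepsilon$, then iterate until the exponent reaches $1$), i.e.\ by the same circle of ideas that yields $H^{r}=H^{p}\cdot H^{q}$. A self-contained proof would have to write out that factorization step; everything else in your outline is routine.
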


\begin{theorem}
\label{Halpha}If $\alpha \in N_{\Delta }(\mathcal{M},\tau ),$ then there
exists a faithful normal tracial state $\rho $ such that $H^{\alpha }(%
\mathcal{M},\rho )=H^{1}(\mathcal{M},\rho )\cap L^{\alpha }(\mathcal{M},\rho
)$.
\end{theorem}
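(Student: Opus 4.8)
The plan is to prove the two inclusions separately, working throughout on the algebra $(\mathcal{M},\rho)$ produced by Theorem \ref{cdominating}, where $\rho(\cdot)=\tau(\cdot g)$ with $g\in L^1(\mathcal{Z},\tau)$ positive, $\Delta(g)>0$, and $\alpha\in N_1(\mathcal{M},\rho)$; in particular $\alpha(x)\ge c\|x\|_{1,\rho}$ for some $c>0$. I would first fix this $\rho$ and record that every $a\in\mathcal{M}$ defines an element of $(L^{\alpha}(\mathcal{M},\rho))^{\#}$ through $z\mapsto\rho(za)$, since $|\rho(za)|\le\|a\|\,\|z\|_{1,\rho}\le c^{-1}\|a\|\,\alpha(z)$; hence $H_0^{\infty}\subseteq H_0^1(\mathcal{M},\rho)\cap(L^{\alpha}(\mathcal{M},\rho))^{\#}$, so the test set appearing in the characterization of $H^{\alpha}(\mathcal{M},\rho)$ contains the bounded analytic functions.

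The inclusion $H^{\alpha}(\mathcal{M},\rho)\subseteq H^1(\mathcal{M},\rho)\cap L^{\alpha}(\mathcal{M},\rho)$ is the easy half. By definition $H^{\alpha}(\mathcal{M},\rho)=[H^{\infty}]_{\alpha}\subseteq[\mathcal{M}]_{\alpha}=L^{\alpha}(\mathcal{M},\rho)$. If $x\in H^{\alpha}(\mathcal{M},\rho)$, choose $a_n\in H^{\infty}$ with $\alpha(a_n-x)\to0$; the domination $\alpha\ge c\|\cdot\|_{1,\rho}$ gives $\|a_n-x\|_{1,\rho}\to0$, so $x$ lies in the $\|\cdot\|_{1,\rho}$-closure $H^1(\mathcal{M},\rho)$ of $H^{\infty}$.

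For the reverse inclusion I would invoke the characterization $H^{\alpha}(\mathcal{M},\rho)=\{x\in L^{\alpha}(\mathcal{M},\rho):\rho(xy)=0\text{ for all }y\in H_0^1(\mathcal{M},\rho)\cap(L^{\alpha}(\mathcal{M},\rho))^{\#}\}$ established in the preceding proposition. It then suffices to show that each $x\in H^1(\mathcal{M},\rho)\cap L^{\alpha}(\mathcal{M},\rho)$ satisfies $\rho(xy)=0$ for every $y\in H_0^1(\mathcal{M},\rho)\cap(L^{\alpha}(\mathcal{M},\rho))^{\#}$. By Corollary \ref{H(0,1)}(1), membership of $x$ in $H^1(\mathcal{M},\rho)$ already yields $\rho(xw)=0$ for all $w\in H_0^{\infty}$, so the remaining task is to pass from the bounded test elements $w\in H_0^{\infty}$ to a general $y\in H_0^1(\mathcal{M},\rho)\cap(L^{\alpha}(\mathcal{M},\rho))^{\#}$.

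The main obstacle is precisely this passage to the limit: writing $y=\lim_n y_n$ with $y_n\in H_0^{\infty}$ is only immediate in $\|\cdot\|_{1,\rho}$, and pairing a possibly unbounded $x\in L^{\alpha}(\mathcal{M},\rho)$ against such a sequence is not controlled by $\|\cdot\|_{1,\rho}$. I would resolve it by upgrading the approximation to the dual norm $\alpha'$, namely by proving that $H_0^{\infty}$ is $\alpha'$-dense in $H_0^1(\mathcal{M},\rho)\cap(L^{\alpha}(\mathcal{M},\rho))^{\#}$. Granting this, choose $y_n\in H_0^{\infty}$ with $\alpha'(y_n-y)\to0$; the generalized Holder inequality $|\rho(x(y-y_n))|\le\alpha(x)\,\alpha'(y-y_n)\to0$ combined with $\rho(xy_n)=0$ forces $\rho(xy)=0$, completing the argument. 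To obtain the $\alpha'$-density I would transport the problem to $(\mathcal{M},\tau)$ through the strongly outer $h$ of Lemma \ref{outerh}, for which $hH_0^1(\mathcal{M},\rho)=H_0^1(\mathcal{M},\tau)$, and use the inner–outer factorization and norm-density results for noncommutative $H^p$ spaces of \cite{B.L.},\cite{B.L.2} together with the $\|\cdot\|_1$-dominating duality of \cite{Chen}; all of these are available here precisely because Theorem \ref{cdominating} has placed us in the regime $\alpha\in N_1(\mathcal{M},\rho)$.
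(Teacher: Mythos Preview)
Your treatment of the inclusion $H^{\alpha}(\mathcal{M},\rho)\subseteq H^{1}(\mathcal{M},\rho)\cap L^{\alpha}(\mathcal{M},\rho)$ is correct and coincides with the paper's. The gap is in the reverse inclusion: you reduce everything to the claim that $H_{0}^{\infty}$ is $\alpha'$-dense in $H_{0}^{1}(\mathcal{M},\rho)\cap(L^{\alpha}(\mathcal{M},\rho))^{\#}$, but you do not prove this, and the sketch you give does not lead there. Observe that if $(L^{\alpha})^{\#}$ is identified with $L^{\alpha'}$ as in \cite{Chen}, then your density claim is exactly the statement $H_{0}^{1}(\mathcal{M},\rho)\cap L^{\alpha'}(\mathcal{M},\rho)\subseteq H_{0}^{\alpha'}(\mathcal{M},\rho)$, i.e.\ the hard direction of the very theorem you are proving, with $\alpha$ replaced by $\alpha'$. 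Appealing to the $H^{p}$ density results of \cite{B.L.}, \cite{B.L.2} or to \cite{Chen} does not break this circularity: those results are proved for a subdiagonal algebra relative to the given trace, and here $H^{\infty}$ is subdiagonal for $\tau$, not for $\rho$ (the conditional expectation $\Phi_{\rho}$ need not be multiplicative on $H^{\infty}$; cf.\ Section~5). Transporting via the outer $h$ gives $hy\in H_{0}^{1}(\mathcal{M},\tau)$, but $\|\cdot\|_{1,\tau}$-approximants of $hy$ by $H_{0}^{\infty}$ do not produce $\alpha'$-approximants of $y$ by $H_{0}^{\infty}$, since dividing by $h$ is unbounded.

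The paper avoids approximation of $y$ altogether. For $x\in H^{1}(\mathcal{M},\rho)\cap L^{\alpha}(\mathcal{M},\rho)$ and $y\in H_{0}^{1}(\mathcal{M},\rho)\cap(L^{\alpha}(\mathcal{M},\rho))^{\#}$ it transports both elements to the $\tau$-side, obtaining $hx\in H^{1}(\mathcal{M},\tau)$ and $hy\in H_{0}^{1}(\mathcal{M},\tau)$, and then invokes Bekjan's multiplicativity of $\Phi_{\tau}$ on the fractional Hardy spaces (Lemma~\ref{conexpt}): one has $hxhy\in H^{1/2}(\mathcal{M},\tau)$ with $\Phi_{\tau}(hxhy)=\Phi_{\tau}(hx)\Phi_{\tau}(hy)=0$. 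Since $h$ is strongly outer, choosing $a_{n}\in H^{\infty}$ with $a_{n}h\to 1$ in $\|\cdot\|_{1,\tau}$ yields $\Phi_{\tau}(hxyha_{n})=\Phi_{\tau}(hx)\Phi_{\tau}(hy)\Phi_{\tau}(a_{n})=0$ and hence $\Phi_{\tau}(hxy)=0$ in $L^{1/2}$; because $xy\in L^{1}(\mathcal{M},\rho)$ one then gets $\rho(xy)=\tau(hxy)=\tau(\Phi_{\tau}(hxy))=0$. The essential tool you are missing is this multiplicativity of $\Phi_{\tau}$ on $H^{p}$ for $p<1$, which lets one compute $\rho(xy)$ directly rather than approximate $y$.
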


\begin{proof}
Since $\alpha \in N_{\Delta }(\mathcal{M},\tau ),$ there exists a positive $%
g\in L^{1}\left( \mathcal{M},\tau \right) $ and $\Delta \left( g\right) >0$
such that $\alpha \left( x\right) \geq c\tau \left( \left\vert x\right\vert
\cdot g\right) $ for some $c>0.$ We define a faithful normal tracial state $%
\rho \left( x\right) =\tau \left( xg\right) $, $\forall x\in \mathcal{M}$.
Since $\alpha \left( x\right) \geq c\tau \left( \left\vert x\right\vert
g\right) =c\rho \left( \left\vert x\right\vert \right) =c\left\Vert
x\right\Vert _{1,\rho },$ $\alpha $ is $\Vert \cdot \Vert _{1,\rho }$%
-dominating, so $\alpha $-convergence implies $\Vert \cdot \Vert _{1,\rho }$%
-convergence, thus $H^{\alpha }(\mathcal{M},\rho )=\overline{H^{\infty }(%
\mathcal{M},\rho )}^{\alpha }=\overline{H^{\infty }(\mathcal{M},\rho )}%
^{\Vert \cdot \Vert _{1,\rho }}=H^{1}(\mathcal{M},\rho )$. Also, $H^{\alpha
}(\mathcal{M},\rho )=\overline{H^{\infty }(\mathcal{M},\rho )}^{\alpha
}\subseteq L^{\alpha }(\mathcal{M},\rho )$. Therefore, $H^{\alpha }(\mathcal{%
M},\rho )\subseteq H^{1}(\mathcal{M},\rho )\cap L^{\alpha }(\mathcal{M},\rho
)$.

To prove $H^{1}(\mathcal{M},\rho )\cap L^{\alpha }(\mathcal{M},\rho
)\subseteq H^{\alpha }(\mathcal{M},\rho )$. Suppose $x\in H^{1}(\mathcal{M}%
,\rho )\cap L^{\alpha }(\mathcal{M},\rho )$, then $x\in L^{\alpha }(\mathcal{%
M},\rho )$. Assume that $y\in H_{0}^{1}(\mathcal{M},\rho )\cap (L^{\alpha }(%
\mathcal{M},\rho ))^{\#}$. So $\Phi _{\tau }(hy)=0$. Note that $hx\in H^{1}(%
\mathcal{M},\tau )$, $hy\in H_{0}^{1}(\mathcal{M},\tau )$ and $hxhy\in H^{1}(%
\mathcal{M},\tau )H_{0}^{1}(\mathcal{M},\tau )\subseteq H^{\frac{1}{2}}$.
From theorem 2.1 in \cite{Bekjan}, and lemma \ref{conexpt} we know that $%
\Phi _{\tau }(hxhy)\in L^{\frac{1}{2}}(\mathcal{D},\tau )$ and $\Phi _{\tau
}(hxhy)=\Phi _{\tau }(hx)\Phi _{\tau }(hy)=0$. Moreover, $x\in L^{\alpha }(%
\mathcal{M},\rho )$ and $y\in (L^{\alpha }(\mathcal{M},\rho ))^{\#}$, from 
\cite{Chen}, $xy\in L^{\alpha }(\mathcal{M},\rho )\subseteq L^{1}(\mathcal{M}%
,\rho )$. So $hxy\in L^{1}(\mathcal{M},\tau )$, and $\Phi _{\tau }(hxy)$ is
also in $L^{1}(\mathcal{M},\tau )$. Thus $\rho (xy)=\tau (hxy)=\tau (\Phi
_{\tau }(hxy))=\tau (0)=0$.

Now we check $\Phi _{\tau }(hxy)=0$. Since $h$ is strongly outer in $H^{1}(%
\mathcal{M},\rho )$, there is a sequence $\{a_{n}\}$ in $H^{\infty }$ such
that $a_{n}h\rightarrow 1$ in $\Vert \cdot \Vert _{1}$ norm. Therefore, $%
\Vert hxyha_{n}-hxy\Vert _{\frac{1}{2}}=\Vert hxy(ha_{n}-1)\Vert _{\frac{1}{2%
}}\leq \Vert hxy\Vert _{1}\Vert ha_{n}-1\Vert _{1}\rightarrow 0$ as $%
n\rightarrow \infty $. And by theorem 2.1 in \cite{Bekjan}, $\Phi _{\tau
}(hxyha_{n})\rightarrow \Phi _{\tau }(hxy)$. Also, we have $\Phi _{\tau
}(hxyha_{n})=\Phi _{\tau }(hx)\Phi _{\tau }(hy)\Phi _{\tau }(a_{n})=0$, so $%
\Phi _{\tau }(hxy)=0$. By the definition of $\mathcal{J}$, we conclude that $%
x\in \mathcal{J}$. Therefore $H^{1}(\mathcal{M},\rho )\cap L^{\alpha }(%
\mathcal{M},\rho )\subseteq \mathcal{J}=H^{\alpha }(\mathcal{M},\rho )$.
\end{proof}

\section{Beurling's invariant subspace theorem}

In this section, we extend the Chen-Hadwin-Shen theorem for continuous
normalized unitarily invariant norms on $(\mathcal{M},\tau).$

First, we will prove the factorization theorem, in order to do this, we need
the following lemma.

\begin{lemma}
\label{delta}\text{(from }\cite{H.S.}\text{) }Let $x\in L^{p}(\mathcal{M})$, 
$p>0$, then we have\newline
(1) $\Delta (x)=\Delta (x^{\ast })=\Delta (|x|)$,\newline
(2) $\Delta (xy)=\Delta (x)\Delta (y)=\Delta (yx)$ for any $y\in L^{s}(%
\mathcal{M})$, $s>0$.
\end{lemma}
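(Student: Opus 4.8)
The plan is to work directly from the definition $\Delta(x)=\exp(\tau(\log|x|))=\exp(\int_0^\infty \log(t)\,d\nu_{|x|}(t))$, with the convention that $\Delta(x)=0$ when the integral diverges to $-\infty$. Part (1) splits into two claims. The identity $\Delta(x)=\Delta(|x|)$ is immediate: since $|x|\geq 0$ one has $||x||=|x|$, so $\nu_{||x||}=\nu_{|x|}$ and the two determinants coincide by definition. For $\Delta(x)=\Delta(x^*)$ I would show that $|x|=(x^*x)^{1/2}$ and $|x^*|=(xx^*)^{1/2}$ have the same spectral distribution with respect to $\tau$. Using $\tau(ab)=\tau(ba)$ repeatedly gives $\tau((x^*x)^n)=\tau((xx^*)^n)$ for every $n\geq 1$, and Weierstrass approximation upgrades this to $\tau(f(x^*x))=\tau(f(xx^*))$ for all continuous $f$ vanishing at $0$; hence $\nu_{x^*x}$ and $\nu_{xx^*}$ agree on $(0,\infty)$, and the pushforward under $t\mapsto t^{1/2}$ shows $\nu_{|x|}$ and $\nu_{|x^*|}$ agree there. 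The possible atoms at $0$ match because $\tau$ is finite and the left and right support projections of $x$ satisfy $\tau(r(x))=\tau(l(x))$, so $\nu_{|x|}(\{0\})=\nu_{|x^*|}(\{0\})$. Therefore $\Delta(x)=\Delta(x^*)$.

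For part (2) the real content is the multiplicativity $\Delta(xy)=\Delta(x)\Delta(y)$; once this is known, $\Delta(yx)=\Delta(y)\Delta(x)=\Delta(x)\Delta(y)$ follows since the right-hand side is a product of real numbers. I would first establish multiplicativity for bounded invertible operators. Using the polar decomposition $x=u|x|$ with $u$ unitary and the unitary invariance $\Delta(ux)=\Delta(x)$ from part (1), I reduce to invertible elements and prove the product formula by the classical Fuglede--Kadison homotopy argument: choosing norm-continuous paths $g,h$ of invertibles from $I$ to $a,b$, one has $\tau((gh)'(gh)^{-1})=\tau(g'g^{-1})+\tau(h'h^{-1})$ by the trace property, and integrating the identity $\frac{d}{dt}\log\Delta(g(t))=\mathrm{Re}\,\tau(g'(t)g(t)^{-1})$ yields $\log\Delta(ab)=\log\Delta(a)+\log\Delta(b)$. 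I would then pass from bounded invertibles to general $x\in L^p(\mathcal{M})$, $y\in L^s(\mathcal{M})$ by truncation and approximation, controlling the spectral mass of $|x|$, $|y|$ and $|xy|$ near $0$, where $\log$ is unbounded, via monotone and dominated convergence for the defining integral.

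The main obstacle is precisely the non-commutativity in the multiplicative law: because $x$ and $y$ need not commute, $\log|xy|$ does not decompose additively in terms of $\log|x|$ and $\log|y|$, so the equality cannot be read off the spectral calculus of a single operator. The conceptually cleanest route is through the Brown measure, writing $\Delta(x)=\exp(\int_{\mathbb{C}}\log|\lambda|\,d\mu_x(\lambda))$; multiplicativity of $\Delta$ in the full $L^p$ generality, together with the invariance $\mu_{xy}=\mu_{yx}$, is exactly the content supplied by the cited work of Haagerup and Schultz, and the only genuinely delicate point in reproducing it is the approximation step that transfers the bounded invertible case to unbounded affiliated operators while keeping the logarithmic singularity at $0$ under control.
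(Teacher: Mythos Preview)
The paper does not prove this lemma at all: it is quoted verbatim as a known result from Haagerup--Schultz (the parenthetical ``from \cite{H.S.}'' is part of the lemma heading), and the paper simply invokes it later in the proof of Theorem~\ref{factorization}. So there is no proof in the paper to compare against.

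Your sketch is a reasonable outline of how one would actually establish the result. Part (1) is essentially complete. For part (2) you correctly isolate the substantive difficulty: multiplicativity for bounded invertibles via the Fuglede--Kadison differentiation/homotopy argument is classical, and the genuine work lies in the passage to unbounded $x\in L^p$, $y\in L^s$, where the logarithmic singularity at $0$ must be controlled under approximation. You rightly note that this extension is precisely what Haagerup--Schultz supply; your proposal is honest that this step is delicate rather than claiming to have carried it out. One small correction: invoking the Brown measure identity $\Delta(x)=\exp\bigl(\int_{\mathbb{C}}\log|\lambda|\,d\mu_x(\lambda)\bigr)$ as a route to multiplicativity is somewhat circular, since in Haagerup--Schultz the construction of $\mu_x$ for unbounded $x$ rests on the extended determinant and its multiplicativity, not the other way around. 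The cleaner logical order is the one you give first: bounded invertibles by homotopy, then approximation.
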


\begin{theorem}
\label{factorization}Suppose $\alpha \in N_{\Delta }\left( \mathcal{M},\tau
\right) $, there exists a faithful normal tracial state $\rho $ on $\mathcal{%
M}$ such that $\rho \left( x\right) =\tau \left( xg\right) $ for some
positive $g\in L^{1}\left( \mathcal{Z},\tau \right) $ and the determinant of 
$g$ is positive. If $x\in \mathcal{M}$ and $x^{-1}\in L^{\alpha }\left( 
\mathcal{M},\rho \right) ,$ then there are unitary operators $u_{1},u_{2}\in 
\mathcal{M}$ and $s_{1},s_{2}\in H^{\infty }$ such that $%
x=u_{1}s_{1}=s_{2}u_{2}$ and $s_{1}^{-1},s_{2}^{-1}\in H^{\alpha }(\mathcal{M%
},\rho )$.
\end{theorem}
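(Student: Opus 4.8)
The plan is to reduce the two-sided factorization of $x$ to a one-sided outer factorization of the positive parts $|x|=(x^{*}x)^{1/2}$ and $|x^{*}|=(xx^{*})^{1/2}$, and then read off the unitaries. Throughout I work relative to the state $\rho(\cdot)=\tau(\cdot g)$ produced by Theorem~\ref{cdominating}; since $g\in L^{1}(\mathcal{Z},\tau)$ is \emph{central}, one checks $\rho\circ\Phi_{\tau}=\rho$, so $H^{\infty}$ is again a maximal subdiagonal subalgebra with respect to $\rho$ and $\alpha\in N_{1}(\mathcal{M},\rho)$, which places us in the Chen--Hadwin--Shen setting over $(\mathcal{M},\rho)$.

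First I would record the integrability facts. From $x\in\mathcal{M}$ and $x^{-1}\in L^{\alpha}(\mathcal{M},\rho)$ the gauge property of $\alpha$ gives $(x^{-1})^{*}\in L^{\alpha}(\mathcal{M},\rho)$, and since $|(x^{-1})^{*}|^{2}=x^{-1}(x^{-1})^{*}=(x^{*}x)^{-1}=|x|^{-2}$ I obtain $|x|^{-1}=|(x^{-1})^{*}|\in L^{\alpha}(\mathcal{M},\rho)\subseteq L^{1}(\mathcal{M},\rho)$; symmetrically $|x^{*}|^{-1}=|x^{-1}|\in L^{\alpha}(\mathcal{M},\rho)$. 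Because $|x|$ is bounded and $|x|^{-1}\in L^{1}(\mathcal{M},\rho)$, the integral $\rho(\log|x|)$ is finite, so the Fuglede--Kadison determinant of $|x|$ relative to $\rho$ is positive, and likewise for $|x^{*}|$ (by Lemma~\ref{delta} these agree with $\Delta(x)$ up to the choice of normalization).

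Next I would invoke the outer factorization of a positive operator with positive determinant (the Blecher--Labuschagne factorization underlying Lemma~\ref{phifactorization}, applied over $(\mathcal{M},\rho)$): there is a strongly outer $s_{1}\in H^{\infty}$ with $|s_{1}|=|x|$, i.e.\ $s_{1}^{*}s_{1}=x^{*}x$, and, using the left-handed version, a strongly outer $s_{2}\in H^{\infty}$ with $|s_{2}^{*}|=|x^{*}|$, i.e.\ $s_{2}s_{2}^{*}=xx^{*}$. Setting $u_{1}=xs_{1}^{-1}$ and $u_{2}=s_{2}^{-1}x$, the direct computations $u_{1}^{*}u_{1}=(s_{1}^{-1})^{*}s_{1}^{*}s_{1}s_{1}^{-1}=I$, $u_{1}u_{1}^{*}=x(x^{*}x)^{-1}x^{*}=I$ and similarly $u_{2}^{*}u_{2}=u_{2}u_{2}^{*}=I$ show that $u_{1},u_{2}$ are unitaries in $\mathcal{M}$ with $x=u_{1}s_{1}=s_{2}u_{2}$. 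For the membership $s_{1}^{-1}\in H^{\alpha}(\mathcal{M},\rho)$ I would argue in two pieces and combine them through Theorem~\ref{Halpha}, which gives $H^{\alpha}(\mathcal{M},\rho)=H^{1}(\mathcal{M},\rho)\cap L^{\alpha}(\mathcal{M},\rho)$. The $L^{\alpha}$ piece is immediate from $|(s_{1}^{-1})^{*}|^{2}=s_{1}^{-1}(s_{1}^{-1})^{*}=(s_{1}^{*}s_{1})^{-1}=|x|^{-2}$, whence $s_{1}^{-1}\in L^{\alpha}(\mathcal{M},\rho)$; the same computation with $s_{2}$ and $|x^{*}|$ handles $s_{2}^{-1}$.

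The main obstacle is the analyticity piece, namely $s_{1}^{-1}\in H^{1}(\mathcal{M},\rho)$, i.e.\ that the inverse of a strongly outer function remains in the Hardy space. I would establish this from the characterization in Corollary~\ref{H(0,1)}(1): it suffices to show $\rho(s_{1}^{-1}y)=0$ for every $y\in H_{0}^{\infty}$. Using that $s_{1}$ is outer one has $b_{n}\in H^{\infty}$ with $s_{1}b_{n}\to I$ in $\|\cdot\|_{1,\rho}$, and the multiplicativity of $\Phi_{\tau}$ on Hardy spaces (Lemma~\ref{conexpt}) together with the surjective isometry $Ux=hx$ of Lemma~\ref{outerh} identifying $H^{1}(\mathcal{M},\rho)$ with $H^{1}(\mathcal{M},\tau)$ lets me transfer the annihilation condition to $hs_{1}^{-1}\in H^{1}(\mathcal{M},\tau)$, where the Smirnov-class argument for inverses of outer functions applies. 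Granting this, Theorem~\ref{Halpha} yields $s_{1}^{-1}\in H^{\alpha}(\mathcal{M},\rho)$, and the symmetric left-handed argument gives $s_{2}^{-1}\in H^{\alpha}(\mathcal{M},\rho)$. The two delicate points to get right are the existence of the one-sided outer factorizations of $|x|$ and $|x^{*}|$ over $(\mathcal{M},\rho)$ (with the determinant computed with respect to $\rho$ via the central $g$), and this Smirnov-type step that inverts outerness.
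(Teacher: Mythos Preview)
Your overall plan---polar decomposition, outer factorization of $|x|$, then combine $s_{1}^{-1}\in L^{\alpha}(\mathcal{M},\rho)$ with $s_{1}^{-1}\in H^{1}(\mathcal{M},\rho)$ via Theorem~\ref{Halpha}---is exactly the paper's strategy. However, the very first step of your argument is wrong, and this is not a cosmetic issue: it is precisely the obstacle the paper has to work around.

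You assert that since $g\in L^{1}(\mathcal{Z},\tau)$ is central, ``one checks $\rho\circ\Phi_{\tau}=\rho$'', so that $H^{\infty}$ is again maximal subdiagonal for $\rho$ and you may run the Blecher--Labuschagne outer factorization relative to $\rho$. This fails unless $\mathcal{Z}\subseteq\mathcal{D}$, which is \emph{not} assumed here. The identity $\rho(\Phi_{\tau}(x))=\tau(g\Phi_{\tau}(x))=\tau(\Phi_{\tau}(gx))$ would require $\Phi_{\tau}(gx)=g\Phi_{\tau}(x)$, i.e.\ $g\in\mathcal{D}$; centrality alone does not give this. Concretely, for $\mathcal{M}=L^{\infty}(\mathbb{T},\mu)$, $H^{\infty}=H^{\infty}(\mathbb{T})$, one has $\mathcal{D}=\mathbb{C}$ and $\Phi_{\tau}(f)=\int f\,d\mu$, so $\rho(\Phi_{\tau}(f))=\tau(f)\neq\rho(f)$ for generic $g$. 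The paper flags exactly this in Section~5: $\Phi_{\mathcal{D},\rho}$ is multiplicative on $H^{\infty}$ only under the extra hypothesis $\mathcal{Z}\subseteq\mathcal{D}$. Consequently you cannot invoke Lemma~\ref{phifactorization} ``over $(\mathcal{M},\rho)$'', nor speak of a $\rho$-determinant controlling outerness there.

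What the paper does instead is to carry out the outer factorization relative to $\tau$, where the subdiagonal theory is available: from $|k|^{-1}\in L^{\alpha}(\mathcal{M},\rho)\subseteq L^{1}(\mathcal{M},\rho)$ one gets $\tau(\log|k|)>-\infty$, hence $\Delta_{\tau}(|k|)>0$, and Corollary~4.17 of \cite{B.L.} produces a strongly outer $s\in H^{1}(\mathcal{M},\tau)\cap\mathcal{M}=H^{\infty}$ with $|s|=|k|$. The passage from $\tau$ to $\rho$ is then made through the bridge $h$ of Lemma~\ref{outerh} (the strongly outer element with $|h|=g$ and $hH^{1}(\mathcal{M},\rho)=H^{1}(\mathcal{M},\tau)$): one factors $|h|\,|k|^{-1}=|f|$ for a strongly outer $f\in H^{1}(\mathcal{M},\tau)$, writes $f=hf_{1}$ with $f_{1}\in H^{1}(\mathcal{M},\rho)$, and unwinds the polar decompositions to exhibit $s^{-1}$ as a product of unitaries and $f_{1}$, whence $s^{-1}\in H^{1}(\mathcal{M},\rho)$. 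Your sketch of a ``Smirnov-type step'' via $U x=hx$ is pointing in this direction, but it cannot be carried out after the flawed reduction; you need to perform the outer factorization on the $\tau$-side from the start and use $h$ as the transfer mechanism, not as an afterthought.
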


\begin{proof}
Since $\alpha \in N_{\Delta }\left( \mathcal{M},\tau \right) $, the first
statement is clear. Suppose $k\in \mathcal{M}$ with $k^{-1}\in L^{\alpha }(%
\mathcal{M},\rho ).$ Assume that $k=v\left\vert k\right\vert $ is the polar
decomposition of $k$ in $\mathcal{M}$, where $v$ is a unitary in $\mathcal{M}
$ and $\left\vert k\right\vert \in \mathcal{M}.$ Since $log(|k|)\leq
|k|,log(|h|)-log(|k|)=log(|h||k|^{-1})\leq |h||k|$ and $-\log (|k|)\leq
|h||k|-log(|h|),$ $|log(|k|)|\leq |k|+(|h||k|-log(|h|))$, so $\Delta
(\left\vert k\right\vert )=e^{\tau (\log \left\vert k\right\vert )}>0$ and $%
\left\vert k\right\vert \in L^{1}(\mathcal{M})^{+}$. By corollary 4.17 in 
\cite{B.L.}, there exists a strongly outer $s\in H^{1}(\mathcal{M},\tau )$
and $s=u_{1}\left\vert s\right\vert $ is the polar decomposition of $s$ such
that $\left\vert k\right\vert =\left\vert s\right\vert .$ Since $\left\vert
k\right\vert \in \mathcal{M}$, $\left\vert s\right\vert \in \mathcal{M},$
therefore, $s\in \mathcal{M}$ and $s\in H^{1}(\mathcal{M},\tau )$ implies $%
s\in H^{\infty }(\mathcal{M},\tau ).$ Also, we have $\left\vert k\right\vert
=u_{1}^{\ast }s,$ so $k=vu_{1}^{\ast }s=us,$ where $u=vu_{1}^{\ast }.$

Now we check $s^{-1}\in H^{\alpha }(\mathcal{M},\rho ).$ First, $k^{-1}\in
L^{\alpha }(\mathcal{M},\rho )\subseteq L^{1}(\mathcal{M},\rho ),$ so $%
hk^{-1}\in L^{1}(\mathcal{M},\tau ).$ Since $k^{-1}=\left\vert k\right\vert
^{-1}v^{\ast }\in L^{\alpha }(\mathcal{M},\rho ),$ $\left\vert k\right\vert
^{-1}\in L^{\alpha }(\mathcal{M},\rho )\subseteq L^{1}(\mathcal{M},\rho )$
and $\left\vert h\right\vert \left\vert k\right\vert ^{-1}\in L^{1}(\mathcal{%
M},\tau )$. \newline
$\Delta (\left\vert h\right\vert \left\vert k\right\vert ^{-1})=\Delta
(\left\vert h\right\vert )\Delta (\left\vert k\right\vert ^{-1})>0$ by lemma %
\ref{delta}. Then there exists a strongly outer $f\in H^{1}(\mathcal{M},\tau
)$ such that $\left\vert h\right\vert \left\vert k\right\vert
^{-1}=\left\vert f\right\vert .$ Since $H^{1}(\mathcal{M},\tau )=hH^{1}(%
\mathcal{M},\rho ),$ there exists $f_{1}\in H^{1}(\mathcal{M},\rho )$ such
that $f=hf_{1}.$ Since $\Delta (fs)=\Delta (f)\Delta (s)>0$, by lemma \ref%
{phifactorization}, $fs$ is outer. And $\left\vert f\right\vert \left\vert
s\right\vert =\left\vert h\right\vert \left\vert k\right\vert
^{-1}\left\vert s\right\vert $, so $\left\vert h\right\vert =\left\vert
f\right\vert \left\vert s\right\vert $. Therefore, $\left\vert h\right\vert
=u_{2}^{\ast }fu_{1}^{\ast }s,$ i.e., $h=u_{3}^{\ast }u_{2}^{\ast
}fu_{1}^{\ast }s$, $hf_{1}=f=u_{2}u_{3}hs^{-1}u_{1}$, so $%
s^{-1}=h^{-1}u_{3}^{\ast }u_{2}^{\ast }hf_{1}u_{1}^{\ast }=u_{3}^{\ast
}u_{2}^{\ast }f_{1}u_{1}^{\ast }\in H^{1}(\mathcal{M},\rho )$. Also, we know 
$s^{-1}\in L^{\alpha }(\mathcal{M},\rho ).$ Therefore, by theorem \ref%
{Halpha}, $s^{-1}\in L^{\alpha }(\mathcal{M},\rho )\cap H^{1}(\mathcal{M}%
,\rho )=H^{\alpha }(\mathcal{M},\rho ).$
\end{proof}

The following density theorem also plays an important role in the proof of
our main result of the paper.

\begin{theorem}
\label{density}Let $\alpha \in N_{\Delta }\left( \mathcal{M},\tau \right) $,
then there exists a faithful normal tracial state $\rho $ on $\mathcal{M}$
such that $\rho \left( x\right) =\tau \left( xg\right) $ for some positive $%
g\in L^{1}\left( \mathcal{Z},\tau \right) $ and the determinant of $g$ is
positive. Also, if $\mathcal{W}$ is a closed subspace of $L^{\alpha }(%
\mathcal{M},\rho )$ and $\mathcal{N}$ is a weak* closed linear subspace of $%
\mathcal{M}$ such that $\mathcal{W}H^{\infty }\subset \mathcal{W}$ and $%
\mathcal{N}H^{\infty }\subset \mathcal{N}$, then\newline
(1) $\mathcal{N}=[\mathcal{N}]_{\alpha }\cap \mathcal{M}$,\newline
(2) $\mathcal{W}\cap \mathcal{M}$ is weak* closed in $\mathcal{M}$,\newline
(3) $\mathcal{W}=[\mathcal{W}\cap \mathcal{M}]_{\alpha }$,\newline
(4) If $\mathcal{S}$ is a subspace of $\mathcal{M}$ such that $\mathcal{S}%
H^{\infty }\subset \mathcal{S}$, then $[\mathcal{S}]_{\alpha }=[\overline{%
\mathcal{S}}^{w\ast }]_{\alpha }$, where $\overline{\mathcal{S}}^{w\ast }$
is the weak*-closure of $\mathcal{S}$ in $\mathcal{M}$.
\end{theorem}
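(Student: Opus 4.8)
The plan is to exploit the factorization Theorem \ref{factorization} as the engine that lets me pass between the $\alpha$-world and the weak\* world, and to use the identification $H^{\alpha}(\mathcal{M},\rho)=H^{1}(\mathcal{M},\rho)\cap L^{\alpha}(\mathcal{M},\rho)$ from Theorem \ref{Halpha} to control which elements of $L^{\alpha}$ actually live in $\mathcal{M}$. The first statement (existence of $\rho$, $g$ with $\Delta(g)>0$) is immediate from $\alpha\in N_{\Delta}(\mathcal{M},\tau)$ and Theorem \ref{cdominating}. Throughout I will use that $\alpha$ is $c\Vert\cdot\Vert_{1,\rho}$-dominating, so $\alpha$-convergence forces $\Vert\cdot\Vert_{1,\rho}$-convergence, and that $\mathcal{M}$ is $\alpha$-dense in $L^{\alpha}(\mathcal{M},\rho)$ by definition.

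I would prove the four parts in an order that lets later parts lean on earlier ones. For (1): the inclusion $\mathcal{N}\subseteq[\mathcal{N}]_{\alpha}\cap\mathcal{M}$ is trivial, so the content is the reverse. Given $m\in[\mathcal{N}]_{\alpha}\cap\mathcal{M}$, I approximate $m$ in $\alpha$ by elements of $\mathcal{N}$; the key is to perturb $m$ to an invertible element whose inverse lies in $L^{\alpha}(\mathcal{M},\rho)$ (e.g. replacing $m$ by $m+\varepsilon I$ or by absorbing a strongly outer factor), apply Theorem \ref{factorization} to write the perturbation as $s_{2}u_{2}$ with $s_{2}^{-1}\in H^{\alpha}$, and then use the $H^{\infty}$-invariance $\mathcal{N}H^{\infty}\subseteq\mathcal{N}$ together with the weak\*-closedness of $\mathcal{N}$ to pull the $\alpha$-limit back into $\mathcal{M}$ as a genuine weak\* limit inside $\mathcal{N}$. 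This is where $\Delta(g)>0$ is essential, since it is what guarantees the invertible element has an inverse in $L^{\alpha}$. For (2), I would show $\mathcal{W}\cap\mathcal{M}$ is a weak\* closed right $H^{\infty}$-module: taking a bounded net in $\mathcal{W}\cap\mathcal{M}$ converging weak\* to some $m\in\mathcal{M}$, bounded nets in $\mathcal{M}$ converge in $\Vert\cdot\Vert_{1,\rho}$ on the unit ball, hence the limit lands in the $\Vert\cdot\Vert_{1,\rho}$-closure and, being bounded, in $L^{\alpha}$; then the factorization argument shows $m\in\mathcal{W}$, so $m\in\mathcal{W}\cap\mathcal{M}$.

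For (3), the inclusion $[\mathcal{W}\cap\mathcal{M}]_{\alpha}\subseteq\mathcal{W}$ is clear since $\mathcal{W}$ is $\alpha$-closed. For the reverse I take $w\in\mathcal{W}$ and approximate it in $\alpha$ by $\mathcal{M}$-elements, then use Theorem \ref{factorization} to replace an approximant $x\in\mathcal{M}$ (after an invertible perturbation) by $s_{2}u_{2}$ with $s_{2}^{-1}\in H^{\alpha}$; multiplying $w$ on the right by $s_{2}^{-1}$-type factors keeps me inside $\mathcal{W}H^{\infty}\subseteq\mathcal{W}$ while producing bounded elements that, by part (2), lie in $\mathcal{W}\cap\mathcal{M}$, and letting the perturbation tend to $0$ recovers $w$ as an $\alpha$-limit from $\mathcal{W}\cap\mathcal{M}$. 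Finally (4) follows by combining the previous parts: set $\mathcal{N}=\overline{\mathcal{S}}^{w*}$, which is weak\* closed with $\mathcal{N}H^{\infty}\subseteq\mathcal{N}$, and apply (1) and (3) to identify $[\mathcal{S}]_{\alpha}$ and $[\mathcal{N}]_{\alpha}$; the inclusion $\mathcal{S}\subseteq\mathcal{N}$ gives one containment, and part (1) applied to $\mathcal{N}$ gives the other.

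The main obstacle I anticipate is the invertibility/perturbation step that makes Theorem \ref{factorization} applicable. Theorem \ref{factorization} requires an element $x\in\mathcal{M}$ with $x^{-1}\in L^{\alpha}(\mathcal{M},\rho)$, but a generic approximant from $\mathcal{N}$ or $\mathcal{W}\cap\mathcal{M}$ need not be invertible at all. The delicate part is to choose the perturbation (likely of the form $x+\varepsilon s$ for a strongly outer $s$, or an appropriate use of the outer factor $h$ from Lemma \ref{outerh}) so that (a) the perturbed element is invertible with inverse in $L^{\alpha}$, (b) the perturbation is small in $\alpha$, and (c) multiplying by the resulting $H^{\infty}$ factors preserves the invariance $\mathcal{W}H^{\infty}\subseteq\mathcal{W}$ so the approximants stay inside the relevant space. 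Managing these three requirements simultaneously, and controlling the $\alpha$-norms of the error terms via the generalized Hölder inequality and the $c\Vert\cdot\Vert_{1,\rho}$-domination, is the technical heart of the argument.
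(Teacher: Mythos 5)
Your proposal assembles the right ingredients (Theorem \ref{factorization}, the $c\Vert\cdot\Vert_{1,\rho}$-domination, the right $H^{\infty}$-invariance), but it misses the mechanism that actually drives the paper's proof: parts (1)--(3) are all Hahn--Banach \emph{separation} arguments, and the factorization theorem is applied not to the element being approximated but to a truncation of the \emph{separating functional}. Concretely, for (1) the paper supposes $x\in[\mathcal{N}]_{\alpha}\cap\mathcal{M}\setminus\mathcal{N}$, separates with $\xi\in L^{1}(\mathcal{M},\rho)$ satisfying $\rho(\xi x)\neq0$ and $\rho(\xi\,\mathcal{N})=0$, writes $\xi=|\xi^{\ast}|v$, sets $k=f(|\xi^{\ast}|)$ with $f(t)=\min(1,1/t)$ (so $k\in\mathcal{M}$, $k^{-1}\in L^{1}$, $k|\xi^{\ast}|\in\mathcal{M}$), factors $k=us$ with $s^{-1}\in H^{1}$, and replaces $\xi$ by the \emph{bounded} element $z=t_{N}s\xi\in\mathcal{M}$ (where $t_{n}\to s^{-1}$ in $\Vert\cdot\Vert_{1,\rho}$, $t_{n}\in H^{\infty}$); $z$ still annihilates $\mathcal{N}$ because $\rho(t_{n}s\xi y)=\rho(\xi yt_{n}s)$ and $yt_{n}s\in\mathcal{N}H^{\infty}\subseteq\mathcal{N}$, and it still separates $x$ for suitable $N$. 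Since $z\in\mathcal{M}$, the functional $\rho(z\,\cdot)$ is $\Vert\cdot\Vert_{1,\rho}$-continuous, hence $\alpha$-continuous, and passes to the $\alpha$-limit $x$, giving the contradiction. Your version of (1) --- ``pull the $\alpha$-limit back into $\mathcal{M}$ as a genuine weak* limit inside $\mathcal{N}$'' --- cannot get off the ground: the approximating sequence from $\mathcal{N}$ need not be bounded in operator norm, so there is no weak* compactness to invoke, and perturbing the limit $m$ to an invertible element does nothing to address this.

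Two further concrete problems. In (2) you assert that a bounded weak*-convergent net in $\mathcal{M}$ converges in $\Vert\cdot\Vert_{1,\rho}$; this is false (in $L^{\infty}(\mathbb{T})$ the net $e^{int}$ tends to $0$ weak* while $\Vert e^{int}\Vert_{1}=1$). The paper's (2) needs none of this: a separating $\xi\in L^{1}$ gives a functional $\rho(\xi\,\cdot)$ that is automatically weak*-continuous on $\mathcal{M}$ and vanishes on $\mathcal{W}\cap\mathcal{M}$, hence on its weak* closure. Second, the perturbation you flag as the ``technical heart'' ($x+\varepsilon I$ or $x+\varepsilon s$ followed by $\varepsilon\to0$) is the wrong device and would be hard to control; the paper uses the fixed truncation $k=f(|x|)$, which lies in $\mathcal{M}$, has $k^{-1}=\max(1,|x|)\in L^{\alpha}$ whenever $|x|\in L^{\alpha}$, and satisfies $|x|k\in\mathcal{M}$, so that in (3) one gets $xs=xku^{\ast}\in\mathcal{W}H^{\infty}\cap\mathcal{M}$ in a single step with no limiting argument, and then recovers $\rho(\xi x)=\rho(\xi xss^{-1})=0$ using $s^{-1}\in H^{\alpha}$ and the density of $H^{\infty}$ in $H^{\alpha}$. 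Your part (4) is fine and matches the paper.
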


\begin{proof}
Since $\alpha \in N_{\Delta }\left( \mathcal{M},\tau \right) $, clearly,
there exists a faithful normal tracial state $\rho $ on $\mathcal{M}$ by theorem \ref{cdominating}. For
(1), it is clear that $\mathcal{N}\subseteq \lbrack \mathcal{N}]_{\alpha
}\cap \mathcal{M}$. Assume, via contradiction, that $\mathcal{N}\subsetneq
\lbrack \mathcal{N}]_{\alpha }\cap \mathcal{M}$. Note that $\mathcal{N}$ is
a weak* closed linear subspace of $\mathcal{M}$ and $L^{1}(\mathcal{M},\rho )
$ is the predual space of $(\mathcal{M},\rho )$. It follows from the
Hahn-Banach theorem that there exist a $\xi \in L^{1}(\mathcal{M},\rho )$
and a $x\in \lbrack \mathcal{N}]_{\alpha }\cap \mathcal{M}$ such that\newline
(a) $\rho (\xi x)\neq 0$ and
(b) $\rho (\xi y)=0$ for all $y\in \mathcal{N}$.\newline
We claim that there exists a $z\in \mathcal{M}$ such that\newline
(a$^{\prime }$) $\rho (zx)\neq 0$ and
(b$^{\prime }$) $\rho (zy)=0$ for all $y\in \mathcal{N}$. Actually assume
that $\xi =|\xi ^{\ast }|v$ is the polar decomposition of $\xi \in L^{1}(%
\mathcal{M},\rho )$, where $v$ is a unitary element in $\mathcal{M}$ and $%
|\xi ^{\ast }|$ is in $L^{1}(\mathcal{M},\rho )$ is positive. Let $f$ be a
function on $[0,\infty )$ defined by the formula $f(t)=1$ for $0\leq t\leq 1$
and $f(t)=1/t$ for $t>1$. We define $k=f(|\xi ^{\ast }|)$ by the functional
calculus. Then by the construction of $f$, we know that $k\in \mathcal{M}$
and $k^{-1}=f^{-1}(|\xi ^{\ast }|)\in L^{1}(\mathcal{M},\rho )$. It follows
from theorem \ref{factorization} that there exist a unitary operator $u\in 
\mathcal{M}$ and $s\in H^{\infty }$ such that $k=us$ and $s^{-1}\in H^{1}(%
\mathcal{M},\rho )$. Therefore, we can further assume that $\{{t_{n}\}}%
_{n=1}^{\infty }$ is a sequence of elements in $H^{\infty }$ such that $%
\Vert s^{-1}-t_{n}\Vert _{1,\rho }$. Observe that\newline
(i) Since $s,t_{n}$ are in $H^{\infty }$, for each $y\in \mathcal{N}$ we
have that $yt_{n}s\in \mathcal{N}H^{\infty }\subseteq \mathcal{N}$ and $\rho
(t_{n}s\xi y)=\rho (\xi yt_{n}s)=0$,\newline
(ii) We have $s\xi =(u\ast u)s(|\xi ^{\ast }|v)=u\ast (k|\xi ^{\ast }|)v\in 
\mathcal{M}$, by the definition of $k$,\newline
(iii) From (a) and (i), we have $0\neq \rho (\xi x)=\rho (s^{-1}s\xi x)=%
\underset{n\rightarrow \infty }{\lim }\rho (t_{n}s\xi x)$.\newline
Combining (i), (ii) and (iii), we are able to find an $N\in \mathbb{Z}$ such
that $z=t_{N}s\xi \in \mathcal{M}$ satisfying\newline
(a$^{\prime }$) $\rho (zx)\neq 0$ and
(b$^{\prime }$) $\rho (zy)=0$ for all $y\in \mathcal{N}$.\newline
Recall that $x\in \lbrack \overline{\mathcal{N}}]_{\alpha }$. Then there is
a sequence $\{{x_{n}\}}\subseteq \mathcal{N}$ such that $\alpha
(x-x_{n})\rightarrow 0$. We have\newline
$|\rho (zx_{n})-\rho (zx)|=|\rho (x-x_{n})|\leq \Vert x-x_{n}\Vert _{1,\rho
}\Vert z\Vert \rightarrow 0$.\newline
Combining with (b$^{\prime }$) we conclude that $\rho (zx)=\underset{%
n\rightarrow \infty }{\lim }\rho (zx_{n})=0$. This contradicts with the
result (a$^{\prime }$). Therefore, $\mathcal{N}=[\mathcal{N}]_{\alpha }\cap 
\mathcal{M}$.

For (2), let $\overline{\mathcal{W}\cap \mathcal{M}}^{w\ast }$ be the
weak*-closure of $\mathcal{W}\cap \mathcal{M}$ in $\mathcal{M}$. In order to
show that $\mathcal{W}\cap \mathcal{M}=\overline{\mathcal{W}\cap \mathcal{M}}%
^{w\ast }$, it suffices to show that $\overline{\mathcal{W}\cap \mathcal{M}}%
^{w\ast }\subseteq \mathcal{W}$. Assume, to the contrary, that $\overline{%
\mathcal{W}\cap \mathcal{M}}^{w\ast }\nsubseteq \mathcal{W}$. Thus there
exists an element $x$ in $\overline{\mathcal{W}\cap \mathcal{M}}^{w\ast
}\subset \mathcal{M}\subseteq L^{\alpha }(\mathcal{M},\rho )$, but $x\in 
\mathcal{W}$. Since $\mathcal{W}$ is a closed subspace of $L^{\alpha }(%
\mathcal{M},\rho )$, by the Hahn-Banach theorem, there exists a $\xi \in
L^{1}(\mathcal{M},\rho )$ such that $\rho (\xi x)\neq 0$ and $\rho (\xi y)=0$
for all $y\in \mathcal{W}$. Since $\xi \in L^{1}(\mathcal{M},\rho )$, the
linear mapping $\rho _{\xi }:\mathcal{M}\rightarrow \mathcal{%
\mathbb{C}
}$, defined by $\rho _{\xi }(a)=\rho (\xi a)$ for all $a\in \mathcal{M}$ is
weak*-continuous. Note that $x\in \overline{\mathcal{W}\cap \mathcal{M}}%
^{w\ast }$ and $\rho (\xi y)=0$ for all $y\in \mathcal{W}$. We know that $%
\rho (\xi x)=0$, which contradicts with the assumption that $\rho (\xi
x)\neq 0$. Hence $\overline{\mathcal{W}\cap \mathcal{M}}^{w\ast }\subseteq 
\mathcal{W}$, so $\mathcal{W}\cap \mathcal{M}=\overline{\mathcal{W}\cap 
\mathcal{M}}^{w\ast }$.

For (3), since $\mathcal{W}$ is $\alpha $-closed, it is easy to see $[%
\mathcal{W}\cap \mathcal{M}]_{\alpha }\subseteq \mathcal{W}$. Now we assume $%
[\mathcal{W}\cap \mathcal{M}]_{\alpha }\subsetneq \mathcal{M}\subseteq
L^{\alpha }(\mathcal{M},\rho )$. By the Hahn-Banach theorem, there exists an $%
x\in \mathcal{W}$ and $\xi \in L^{1}(\mathcal{M},\rho )$ such that $\rho
(\xi x)\neq 0$ and $\rho (\xi y)=0$ for all $y\in \lbrack \mathcal{W}\cap 
\mathcal{M}]_{\alpha }$. Let $x=v|x|$ be the polar decomposition of $x$ in $%
L^{\alpha }(\mathcal{M},\rho )$, where $v$ is a unitary element in $\mathcal{%
M}$. Let $f$ be a function on $[0,\infty )$ defined by the formula $f(t)=1$
for $0\leq t\leq 1$ and $f(t)=1/t$ for $t>1$. We define $k=f(|x|)$ by the
functional calculus. Then by the construction of $f$, we know that $k\in 
\mathcal{M}$ and $k^{-1}=f^{-1}(|x|)\in L^{\alpha }(\mathcal{M},\rho )$. It
follows from theorem \ref{factorization} that there exist a unitary operator 
$u\in \mathcal{M}$ and $s\in H^{\infty }$ such that $k=su$ and $s^{-1}\in
H^{\alpha }(\mathcal{M},\rho )$. A little computation shows that $|x|k\in 
\mathcal{M}$ which implies that $xs=xsuu^{\ast }=xku^{\ast }=v(|x|k)u^{\ast
}\in \mathcal{M}$. Since $s\in H^{\infty }$, we know $xs\in \mathcal{W}%
H^{\infty }\subseteq \mathcal{W}$ and thus $xs\in \mathcal{W}\cap \mathcal{M}
$. Furthermore, note that $\mathcal{W}\cap \mathcal{M}H^{\infty }\subseteq 
\mathcal{W}\cap \mathcal{M}$. Thus, if $t\in H^{\infty }$ we see $xst\in 
\mathcal{W}\cap \mathcal{M}$, and $\rho (\xi xst)=0$. Since $H^{\infty }$ is
dense in $H^{\alpha }(\mathcal{M},\rho )$ and $\xi \in L^{1}(\mathcal{M}%
,\rho )$, it follows that $\rho (\xi xst)=0$ for all $t\in H^{\alpha }(%
\mathcal{M},\rho )$. Since $s^{-1}\in H^{\alpha }(\mathcal{M},\rho )$, we
see that $\rho (\xi x)=\rho (\xi xss^{-1})=0$. This contradicts with the
assumption that $\rho (\xi x)\neq 0$ . Therefore $\mathcal{W}=[\mathcal{W}%
\cap \mathcal{M}]_{\alpha }$.

For (4), assume that $\mathcal{S}$ is a subspace of $\mathcal{M}$ such that $%
\mathcal{S}H^{\infty }\subset \mathcal{S}$ and $\overline{\mathcal{S}}%
^{w\ast }$ is weak*-closure of $\mathcal{S}$ in $\mathcal{M}$. Then $[%
\mathcal{S}]_{\alpha }H^{\infty }\subseteq \lbrack \mathcal{S}]_{\alpha }$.
Note that $\mathcal{S}\subseteq \lbrack \mathcal{S}]_{\alpha }\cap \mathcal{M%
}$. From (2), we know that $[\mathcal{S}]_{\alpha }\cap \mathcal{M}$ is
weak*-closed. Therefore, $\overline{\mathcal{S}}^{w\ast }\subseteq \lbrack 
\mathcal{S}]_{\alpha }\cap \mathcal{M}$. Hence $[\overline{\mathcal{S}}%
^{w\ast }]_{\alpha }\subseteq \lbrack \mathcal{S}]_{\alpha }$ and $[\mathcal{%
S}]_{\alpha }=[\overline{\mathcal{S}}^{w\ast }]_{\alpha }$.
\end{proof}
Before we obtain our main result in the paper, we call the definitions of internal column sum of a family of subspaces, and
the lemma in \cite{B.L.}.

\begin{definition}
Let $X$ be a closed subspace of $L^{\alpha }(\mathcal{M},\tau )$ with $%
\alpha \in N_{\Delta }\left( \mathcal{M},\tau \right) $. Then $X$ is called
an internal column sum of a family of closed subspaces $\{X_{\lambda
}\}_{\lambda \in \Lambda }$ of $L^{\alpha }(\mathcal{M},\tau )$, denoted by 
$X=\bigoplus_{\lambda \in \Lambda }^{col}X_{\lambda }$ if\newline
(1) $X_{\mu }^{\ast }X_{\lambda }=\{0\}$ for all distinct $\lambda ,\mu \in
\Lambda $, and\newline
(2) $X=[span\{X_{\lambda }:\lambda \in \Lambda \}]_{\alpha }$.
\end{definition}

\begin{definition}
Let $X$ be a weak*-closed subspace of $\mathcal{M}$ and $\alpha \in
N_{\Delta }\left( \mathcal{M},\tau \right) $. Then $X$ is called an internal
column sum of a family of weak*-closed subspaces $\{X_{\lambda }\}_{\lambda
\in \Lambda }$ of $L^{\alpha }(\mathcal{M},\tau )$, denoted by $%
X=\bigoplus_{\lambda \in \Lambda }^{col}X_{\lambda }$ if\newline
(1) $X_{\mu }^{\ast }X_{\lambda }=\{0\}$ for all distinct $\lambda ,\mu \in
\Lambda $, and\newline
(2) $X=\overline{span\{X_{\lambda }:\lambda \in \Lambda \}}^{w\ast }$.
\end{definition}

\begin{lemma}
\label{colsum}\text{(from }\cite{B.L.}\text{)} Let $\mathcal{M}$ be a finite
von Neumann algebra with a faithful, normal, tracial state $\tau $ and $%
\alpha $ be a normalized, unitarily invariant $\Vert \cdot \Vert _{1,\tau }$%
-dominating continuous norm on $\mathcal{M}$. Let $H^{\infty }$ be a finite
subdiagonal subalgebra of $\mathcal{M}$ and $\mathcal{D}=H^{\infty }\cap
(H^{\infty })^{\ast }$. Assume that $\mathcal{W}\subseteq \mathcal{M}$ is a
weak*-closed subspace such that $\mathcal{W}H^{\infty }\subseteq \mathcal{W}$
. Then there exists a weak*-closed subspace $\mathcal{Y}$ of $\mathcal{M}$
and a family $\{u_{\lambda }\}_{\lambda \in \Lambda }$of partial isometries
in $\mathcal{M}$ such that\newline
(1) $u_{\lambda }^{\ast }\mathcal{Y}=0$ for all $\lambda \in \Lambda $,%
\newline
(2) $u_{\lambda }^{\ast }u_{\lambda }\in \mathcal{D}$ and $u_{\lambda
}^{\ast }u_{\mu }=0$ for all $\lambda ,\mu \in \Lambda $ with $\lambda \neq
\mu $,\newline
(3) $\mathcal{Y}=\overline{H_{0}^{\infty }\mathcal{Y}}^{w\ast }$,\newline
(4) $\mathcal{W}=\mathcal{Y}\oplus ^{col}(\oplus _{\lambda \in
\Lambda }^{col}u_{\lambda }H^{\infty })$.
\end{lemma}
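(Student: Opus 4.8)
The plan is to recognize this as the Blecher--Labuschagne noncommutative Beurling (Wold--Helson--Lowdenslager) decomposition of a weak*-closed right $H^{\infty}$-submodule of $\mathcal{M}$; note first that the hypothesis on $\alpha$ is vestigial, since the conclusion involves only the module structure of $\mathcal{W}$ over $H^{\infty}$ and $\mathcal{D}$, so the statement is exactly the $p=\infty$ case of the theorem in \cite{B.L.} and one could simply invoke it. To give the argument, I would first pass to the Hilbert space $L^{2}(\mathcal{M},\tau)$ and set $\mathcal{K}=[\mathcal{W}]_{2}$. Right multiplication by $H^{\infty}$ extends to bounded operators on $L^{2}$, so $\mathcal{K}$ is a closed subspace with $\mathcal{K}H^{\infty}\subseteq\mathcal{K}$ and $[\mathcal{K}H_{0}^{\infty}]_{2}\subseteq\mathcal{K}$. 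The central objects are the \emph{wandering subspace} $\mathcal{G}=\mathcal{K}\ominus[\mathcal{K}H_{0}^{\infty}]_{2}$, the orthogonal complement taken in $L^{2}$, and the $\mathcal{D}$-valued inner product $\langle a,b\rangle=\Phi_{\tau}(a^{\ast}b)$ induced by the trace-preserving conditional expectation $\Phi_{\tau}$ onto $\mathcal{D}$.

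The heart of the proof is to diagonalize $\mathcal{G}$ over $\mathcal{D}$. Using a Zorn's-lemma maximality argument together with the polar decomposition inside $\mathcal{M}$, I would extract a family $\{u_{\lambda}\}_{\lambda\in\Lambda}$ of partial isometries lying in $\mathcal{G}$ such that each $u_{\lambda}^{\ast}u_{\lambda}\in\mathcal{D}$, the ranges are mutually orthogonal ($u_{\lambda}^{\ast}u_{\mu}=0$ for $\lambda\neq\mu$), and $\{u_{\lambda}\mathcal{D}\}$ is $\mathcal{D}$-orthonormal and total in $\mathcal{G}$; this is where the $\mathcal{D}$-module structure of the wandering subspace and the multiplicativity of $\Phi_{\tau}$ on Hardy spaces (Lemma \ref{conexpt}) are used to manufacture partial isometries with diagonal initial supports out of raw wandering vectors. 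Taking the type-1 summand to be $\bigoplus_{\lambda}^{col}u_{\lambda}H^{\infty}$ and the type-2 summand $\mathcal{Y}$ to be the residual part $\mathcal{K}\ominus\big[\mathrm{span}\{u_{\lambda}H^{\infty}\}\big]_{2}$, properties (1) and (2) are immediate from $u_{\lambda}\in\mathcal{G}$ and $u_{\lambda}^{\ast}u_{\mu}=0$, while (3) is the assertion that the Wold decomposition leaves no further wandering vectors in $\mathcal{Y}$, i.e. $\mathcal{Y}$ absorbs $H_{0}^{\infty}$ as stated.

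Finally I would transfer the $L^{2}$-decomposition back to the weak*-closed setting in $\mathcal{M}$: since $\mathcal{W}$ is weak*-closed and the partial isometries $u_{\lambda}$ together with the generators of $\mathcal{Y}$ can be chosen in $\mathcal{M}$, one intersects each summand with $\mathcal{M}$ and uses the weak*-closedness and density arguments of Theorem \ref{density}(2),(4) to recover $\mathcal{W}=\mathcal{Y}\oplus^{col}(\oplus_{\lambda\in\Lambda}^{col}u_{\lambda}H^{\infty})$ with all summands weak*-closed, giving (4). I expect the main obstacle to be precisely the construction in the middle paragraph: producing a complete family that is orthonormal over $\mathcal{D}$ and consists of genuine \emph{partial isometries} (rather than merely an orthogonal family of vectors) whose initial projections lie in $\mathcal{D}$, and proving that the wandering decomposition is exhaustive so that $\mathcal{Y}$ really satisfies (3). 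The noncommutativity of $H^{\infty}$ is what obstructs the naive Helson--Lowdenslager argument here; the passage between the $L^{2}$ picture and the weak*-closed picture in $\mathcal{M}$ is a secondary and more routine technical point.
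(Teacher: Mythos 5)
The paper does not actually prove this lemma: it is quoted verbatim from \cite{B.L.} and used as a black box, so there is no in-paper argument to compare yours against. That said, your reconstruction is consistent with the original Blecher--Labuschagne proof: they likewise reduce to $L^{2}(\mathcal{M},\tau)$, form the wandering subspace $\mathcal{W}\ominus[\mathcal{W}H_{0}^{\infty}]_{2}$, show it is a right $\mathcal{D}$-module on which $(a,b)\mapsto\Phi_{\tau}(a^{\ast}b)$ is an $L^{1}(\mathcal{D})$-valued inner product, extract by Zorn's lemma a maximal quasi-orthonormal family of partial isometries with initial projections in $\mathcal{D}$, and then return to the weak*-closed picture in $\mathcal{M}$. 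Your observation that the hypothesis on $\alpha$ is vestigial is also correct; the statement involves only the weak* topology and the $H^{\infty}$-module structure, and the norm plays no role in the conclusion. The one caveat is that what you flag as ``the main obstacle'' --- manufacturing genuine partial isometries in $\mathcal{M}$ out of wandering vectors that a priori live only in $L^{2}$, and proving the decomposition is exhaustive so that the residual summand $\mathcal{Y}$ satisfies (3) --- is precisely the substantive content of the Blecher--Labuschagne theorem, so your proposal is an accurate roadmap rather than a self-contained proof. Since the paper itself only cites the result, that level of detail is appropriate here; if you wanted to fill it in, the key technical fact to establish is that the wandering subspace $\mathcal{G}$ satisfies $\mathcal{G}^{\ast}\mathcal{G}\subseteq L^{1}(\mathcal{D})$, from which the polar decompositions of its elements yield the required partial isometries with $u_{\lambda}^{\ast}u_{\lambda}\in\mathcal{D}$.
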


Now we are ready to prove our main result of the paper, an extension of the
Chen-Hadwin-Shen theorem for noncommutative Hardy spaces associated with
finite von Neumann algebras.

\begin{theorem}
\label{main}Let $\mathcal{M}$ be a finite von Neumann algebra with a
faithful, normal, tracial state $\tau $ and $\alpha $ be a determinant,
normalized, unitarily invariant, continuous norm on $\mathcal{M}$. Then
there exists a faithful normal tracial state $\rho $ on $\mathcal{M}$ such
that $\alpha \in N_{1}\left( \mathcal{M},\rho \right) $. Let $H^{\infty }$
be a finite subdiagonal subalgebra of $\mathcal{M}$ and $\mathcal{D}%
=H^{\infty }\cap (H^{\infty })^{\ast }$. If $\mathcal{W}$ is a closed
subspace of $L^{\alpha }(\mathcal{M},\tau )$ such that $\mathcal{W}H^{\infty
}\subseteq \mathcal{W}$, then there exists a closed subspace $\mathcal{Y}$
of $L^{\alpha }(\mathcal{M},\tau )$ and a family $\{u_{\lambda }\}_{\lambda
\in \Lambda }$ of partial isometries in $\mathcal{M}$ such that\newline
(1) $u_{\lambda }^{\ast }\mathcal{Y}=0$ for all $\lambda \in \Lambda $,%
\newline
(2) $u_{\lambda }^{\ast }u_{\lambda }\in \mathcal{D}$ and $u_{\lambda
}^{\ast }u_{\mu }=0$ for all $\lambda ,\mu \in \Lambda $ with $\lambda \neq
\mu $,\newline
(3) $\mathcal{Y}=[H_{0}^{\infty }\mathcal{Y}]_{\alpha }$,\newline
(4) $\mathcal{W}=\mathcal{Y}\oplus ^{col}(\oplus _{\lambda \in
\Lambda }^{col}u_{\lambda }H^{\alpha }).$
\end{theorem}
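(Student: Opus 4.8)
The strategy is to transfer the problem from the $\alpha$-norm setting on $L^{\alpha}(\mathcal{M},\tau)$ to the weak*-closed setting inside $\mathcal{M}$, where Lemma~\ref{colsum} (the Blecher--Labuschagne version for $\|\cdot\|_{1,\rho}$-dominating norms) already gives the desired decomposition, and then transfer the conclusion back. The bridge between the two worlds is precisely the density Theorem~\ref{density}, which says that $\alpha$-closed right-$H^{\infty}$-invariant subspaces of $L^{\alpha}$ are in bijective correspondence with weak*-closed right-$H^{\infty}$-invariant subspaces of $\mathcal{M}$ via $\mathcal{W}\mapsto\mathcal{W}\cap\mathcal{M}$ and $\mathcal{N}\mapsto[\mathcal{N}]_{\alpha}$.

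First, I would invoke Theorem~\ref{cdominating} to produce the faithful normal tracial state $\rho$ with $\rho(\cdot)=\tau(\cdot g)$ and $\alpha\in N_{1}(\mathcal{M},\rho)$; by Definition~\ref{defndelta} the determinant hypothesis guarantees $\Delta(g)>0$, which is what makes $\rho$ faithful and normal. Working now with respect to $\rho$, I would set $\mathcal{N}=\mathcal{W}\cap\mathcal{M}$. Since $\mathcal{W}H^{\infty}\subseteq\mathcal{W}$ and $\mathcal{M}H^{\infty}\subseteq\mathcal{M}$, clearly $\mathcal{N}H^{\infty}\subseteq\mathcal{N}$, and by Theorem~\ref{density}(2) the subspace $\mathcal{N}$ is weak*-closed in $\mathcal{M}$. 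I would then apply Lemma~\ref{colsum} to $\mathcal{N}$ (whose hypotheses hold because $\alpha\in N_{1}(\mathcal{M},\rho)$): this yields a weak*-closed subspace $\mathcal{Y}_{0}\subseteq\mathcal{M}$ and a family $\{u_{\lambda}\}$ of partial isometries with $u_{\lambda}^{\ast}\mathcal{Y}_{0}=0$, $u_{\lambda}^{\ast}u_{\lambda}\in\mathcal{D}$, $u_{\lambda}^{\ast}u_{\mu}=0$ for $\lambda\neq\mu$, $\mathcal{Y}_{0}=\overline{H_{0}^{\infty}\mathcal{Y}_{0}}^{w\ast}$, and $\mathcal{N}=\mathcal{Y}_{0}\oplus^{col}(\oplus_{\lambda}^{col}u_{\lambda}H^{\infty})$.

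Next I would define $\mathcal{Y}=[\mathcal{Y}_{0}]_{\alpha}$ and verify the four conclusions. Properties (1) and (2) concerning the $u_{\lambda}$ carry over verbatim, since they are statements about the partial isometries themselves, already living in $\mathcal{M}$; for $u_{\lambda}^{\ast}\mathcal{Y}=0$ I would note $u_{\lambda}^{\ast}\mathcal{Y}_{0}=0$ and then use continuity of left-multiplication by $u_{\lambda}^{\ast}$ in $\alpha$-norm together with the density of $\mathcal{Y}_{0}$ in $\mathcal{Y}$. For (3), I would show $\mathcal{Y}=[H_{0}^{\infty}\mathcal{Y}]_{\alpha}$: applying $[\cdot]_{\alpha}$ to $\mathcal{Y}_{0}=\overline{H_{0}^{\infty}\mathcal{Y}_{0}}^{w\ast}$ and using Theorem~\ref{density}(4) (which equates $\alpha$-closure with $\alpha$-closure of the weak*-closure for right-$H^{\infty}$-invariant subspaces) should give the equality, after checking $H_{0}^{\infty}\mathcal{Y}$ is the appropriate invariant object. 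For (4), taking $\alpha$-closures of the column-sum decomposition of $\mathcal{N}$ and invoking Theorem~\ref{density}(3) (so that $\mathcal{W}=[\mathcal{W}\cap\mathcal{M}]_{\alpha}=[\mathcal{N}]_{\alpha}$) yields $\mathcal{W}=\mathcal{Y}\oplus^{col}(\oplus_{\lambda}^{col}u_{\lambda}H^{\alpha})$, using that $[u_{\lambda}H^{\infty}]_{\alpha}=u_{\lambda}H^{\alpha}$.

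\textbf{The main obstacle.} The delicate point is verifying that the internal column-sum structure is preserved under the passage from weak*-closures to $\alpha$-closures, i.e.\ that the orthogonality condition $X_{\mu}^{\ast}X_{\lambda}=\{0\}$ and the spanning condition both survive. The orthogonality is essentially algebraic and should persist because $u_{\mu}^{\ast}u_{\lambda}=0$ and $u_{\lambda}^{\ast}\mathcal{Y}_{0}=0$ propagate to the closures by $\alpha$-continuity of multiplication by the bounded operators $u_{\lambda}^{\ast}$; the subtler part is confirming that $[\mathrm{span}\{\mathcal{Y},u_{\lambda}H^{\alpha}\}]_{\alpha}$ really recovers all of $\mathcal{W}$ and that the $\alpha$-closures of the summands do not overlap beyond what orthogonality forbids. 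I expect this to require a careful argument that $\alpha$-closure commutes with the column-sum decomposition, most cleanly handled by reducing each spanning/closure identity to the weak*-level via Theorem~\ref{density} and then closing up, rather than by manipulating $\alpha$-limits directly.
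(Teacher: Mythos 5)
Your proposal follows essentially the same route as the paper's proof: produce $\rho$ via Theorem~\ref{cdominating}, use Theorem~\ref{density}(2) to see that $\mathcal{W}\cap\mathcal{M}$ is weak*-closed, apply Lemma~\ref{colsum} there, set $\mathcal{Y}=[\mathcal{Y}_{0}]_{\alpha}$, prove $[u_{\lambda}H^{\infty}]_{\alpha}=u_{\lambda}H^{\alpha}$, and transfer the column-sum decomposition and the identity $\mathcal{Y}=[H_{0}^{\infty}\mathcal{Y}]_{\alpha}$ back using parts (1), (3), (4) of the density theorem. The plan is correct and matches the paper's argument in all essentials.
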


\begin{proof}
The only if part is obvious. Suppose $\mathcal{W}$ is a closed subspace of $%
L^{\alpha }(\mathcal{M},\tau )$ such that $\mathcal{W}H^{\infty }\subset 
\mathcal{W}$. Then it follows from part(2) of the theorem \ref{density} that 
$\mathcal{W}\cap \mathcal{M}$ is weak* closed in $(\mathcal{M},\tau )=(%
\mathcal{M},\rho )$, we also notice $L^{\infty }(\mathcal{M},\tau )=\mathcal{%
M}=L^{\infty }(\mathcal{M},\rho ),L^{\alpha }(\mathcal{M},\tau )=L^{\alpha }(%
\mathcal{M},\rho )$ and $H^{\alpha }(\mathcal{M},\tau )=H^{\alpha }(\mathcal{%
M},\rho )$. It follows from the lemma \ref{colsum} that%
\begin{equation*}
\mathcal{W}\cap \mathcal{M}=\mathcal{Y_{1}}\bigoplus^{col%
}(\bigoplus_{i\in \mathcal{I}}^{col}u_{i}H^{\infty }),
\end{equation*}%
where $\mathcal{Y}_{1}$ is a closed subspace of $L^{\infty }(\mathcal{M}%
,\rho )$ such that $\mathcal{Y}_{1}=\overline{\mathcal{Y}_{1}H_{0}^{\infty }}%
^{w\ast }$, and where $u_{i}$ are partial isometries in $\mathcal{W}\cap 
\mathcal{M}$ with $u_{j}^{\ast }u_{i}=0$ if $i\neq j$ and with $u_{i}^{\ast
}u_{i}\in \mathcal{D}$. Moreover, for each $i,u_{i}^{\ast }\mathcal{Y}%
_{1}=\{0\}$, left multiplication by the $u_{i}u_{i}^{\ast }$ are contractive
projections from $\mathcal{W}\cap \mathcal{M}$ onto the summands $%
u_{i}H^{\infty }$, and left multiplication by $I-\sum_{i}u_{i}u_{i}^{\ast }$
is a contractive projection from $\mathcal{W}\cap \mathcal{M}$ onto $%
\mathcal{Y}_{1}$.

Let $\mathcal{Y}=[\mathcal{Y}_{1}]_{\alpha }$. It is not hard to verify that
for each $i,u_{i}^{\ast }\mathcal{M}=\{0\}$. We also claim that $%
[u_{i}H^{\infty }]_{\alpha }=u_{i}H^{\alpha }$. In fact it is obvious that $%
[u_{i}H^{\infty }]_{\alpha }\subseteq u_{i}H^{\alpha }$. We will need only
to show that $[u_{i}H^{\infty }]_{\alpha }\subseteq u_{i}H^{\alpha }$. Suppose $x\in \lbrack
u_{i}H^{\infty }]_{\alpha }$, there is a net $%
\{{x_{n}\}}_{n=1}^{\infty }\subseteq H^{\infty }$ such that $\alpha (u_{i}x_{n}-x)\rightarrow
0 $. By the choice of $u_{i},$ we know that $u_{i}u\in \mathcal{D}\subseteq
H^{\infty }$, so $u_{i}ux_{n}\in H^{\infty }$ for each $n\geq 1$. Combining
with the fact that $\alpha (u_{i}^{\ast }u_{i}x_{n}-u_{i}^{\ast }x)\leq
\alpha (u_{i}x_{n}-x)\rightarrow 0$, we obtain that $u_{i}^{\ast }x\in
H^{\alpha }$. Again from the choice of $u_{i}$, we know that $%
u_{i}u_{i}^{\ast }u_{i}x_{n}=u_{i}x_{n}$ for each $n\geq 1$. This implies
that $x=u_{i}(u_{i}^{\ast }x)\in u_{i}H^{\alpha }$. Thus we conclude that $%
[u_{i}H^{\infty }]_{\alpha }\subseteq u_{i}H^{\alpha }$, so $[u_{i}H^{\infty
}]_{\alpha }=u_{i}H^{\alpha }$. Now from parts (3) and (4) of the theorem %
\ref{density} and from the definition of internal column sum, it follows that%
\begin{align*}
\mathcal{W}& =[\mathcal{W}\cap \mathcal{M}]_{\alpha }=[\overline{span\{%
\mathcal{Y}_{1},u_{i}H^{\infty }:i\in \mathcal{I}\}}^{\ast }]_{\alpha
}=[span\{\mathcal{Y}_{1},u_{i}H^{\infty }:i\in \mathcal{I}\}]_{\alpha }. \\
& =[span\{\mathcal{Y},u_{i}H^{\alpha }:i\in \mathcal{I}\}]_{\alpha }=%
\mathcal{Y}\bigoplus^{col}(\bigoplus_{i\in \mathcal{I}}^{col%
}u_{i}H^{\alpha }).
\end{align*}%
\newline

Next, we will verify that $\mathcal{Y}=[\mathcal{Y}H_{0}^{\infty }]_{\alpha
} $. Recall that $\mathcal{Y}=[\mathcal{Y}_{1}]_{\alpha }$. It follows from
part (1) of the theorem \ref{density}, we have\newline
\begin{equation*}
\lbrack \mathcal{Y}_{1}H_{0}^{\infty }]_{\alpha }\cap \mathcal{M}=\overline{%
\mathcal{Y}_{1}H_{0}^{\infty }}^{w\ast }=\mathcal{Y}_{1}.
\end{equation*}%
Hence from part (3) of the theorem \ref{density} we have that\newline
\begin{equation*}
\mathcal{Y}\supseteq \lbrack \mathcal{Y}H_{0}^{\infty }]_{\alpha }\supseteq
\lbrack \mathcal{Y}_{1}H_{0}^{\infty }]_{\alpha }=[[\mathcal{Y}%
_{1}H_{0}^{\infty }]_{\alpha }\cap \mathcal{M}]_{\alpha }=[\mathcal{Y}%
_{1}]_{\alpha }=\mathcal{Y}.
\end{equation*}%
Thus $\mathcal{Y}=[\mathcal{Y}H_{0}^{\infty }]_{\alpha }$. Moreover, it is
not difficult to verify that for each $i$, left multiplication by the $%
u_{i}u_{i}^{\ast }$ are contractive projections from $\mathcal{K}$ onto the
summands $u_{i}H^{\alpha }$, and left multiplication by $I-%
\sum_{i}u_{i}u_{i}^{\ast }$ is a contractive projection from $\mathcal{W}$
onto $\mathcal{Y}$. Now the proof is completed.
\end{proof}

If we consider $\alpha $ as some specific norms, then we have some
corollaries. If we take $\alpha $ be a unitarily invariant, $\Vert \cdot
\Vert _{1,\tau }$-dominating, continuous norm, then we have
Chen-Hadwin-Shen's result in \cite{Chen}.

\begin{corollary}
Let $\mathcal{M}$ be a finite von Neumann algebra with a faithful, normal,
tracial state $\tau $ and $\alpha $ be a normalized, unitarily invariant, $%
\Vert \Vert _{1,\tau }$-dominating, continuous norm on $\mathcal{M}$. Let $%
H^{\infty }$ be a finite subdiagonal subalgebra of $\mathcal{M}$. Let $%
\mathcal{D}=H^{\infty }\cap (H^{\infty })^{\ast }$. Assume that $\mathcal{W}$
is a closed subspace of $L^{\alpha }(\mathcal{M},\tau )$ such that $%
H^{\infty }\mathcal{W}\subseteq \mathcal{W}$ . Then there exists a closed
subspace $\mathcal{Y}$ of $L^{\alpha }(\mathcal{M},\tau )$ and a family $%
\{u_{\lambda }\}_{\lambda \in \Lambda }$ of partial isometries in $\mathcal{W%
}\cap \mathcal{M}$ such that\newline
(1) $u_{\lambda }^{\ast }Y=0$ for all $\lambda \in \Lambda $,\newline
(2) $u_{\lambda }^{\ast }u_{\lambda }\in \mathcal{D}$ and $u_{\lambda
}^{\ast }u_{\mu }=0$ for all $\lambda ,\mu \in \Lambda $ with $\lambda \neq
\mu $,\newline
(3) $\mathcal{Y}=[H_{0}^{\infty }\mathcal{Y}]_{\alpha }$,\newline
(4) $\mathcal{W}=\mathcal{Y}\oplus ^{col}(\oplus _{\lambda \in
\Lambda }^{col}H^{\alpha }u_{\lambda })$.\newline
\end{corollary}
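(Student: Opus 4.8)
The plan is to deduce this left-handed statement from Theorem \ref{main} by passing to adjoints. First note that $\alpha\in N_{1}(\mathcal{M},\tau)\subseteq N_{\Delta}(\mathcal{M},\tau)$, since the choice $g=I$ has $\Delta(I)=1>0$ and $\tau(|x|I)=\|x\|_{1,\tau}$; consequently Theorem \ref{main} applies, and because $\alpha$ is already $\|\cdot\|_{1,\tau}$-dominating we may take $\rho=\tau$, so that every $\rho$-space coincides with the corresponding $\tau$-space. Since $\alpha$ is a unitarily invariant gauge, the involution $x\mapsto x^{\ast}$ is $\alpha$-isometric: writing the polar decomposition $x=v|x|$ we have $|x^{\ast}|=v|x|v^{\ast}$, whence $\alpha(x^{\ast})=\alpha(|x^{\ast}|)=\alpha(v|x|v^{\ast})=\alpha(|x|)=\alpha(x)$. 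Thus $x\mapsto x^{\ast}$ extends to a conjugate-linear isometry of $L^{\alpha}(\mathcal{M},\tau)$ carrying closed subspaces to closed subspaces.

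The key structural observation is that $\mathcal{B}:=(H^{\infty})^{\ast}$ is again a finite, maximal subdiagonal subalgebra of $\mathcal{M}$ with the \emph{same} diagonal $\mathcal{D}=\mathcal{B}\cap\mathcal{B}^{\ast}$ and the \emph{same} trace-preserving conditional expectation $\Phi_{\tau}$. Indeed, $\mathcal{B}+\mathcal{B}^{\ast}=(H^{\infty})^{\ast}+H^{\infty}$ is weak$^{\ast}$ dense, and since $\Phi_{\tau}(a^{\ast})=\Phi_{\tau}(a)^{\ast}$ the multiplicativity of $\Phi_{\tau}$ on $H^{\infty}$ transfers to $\mathcal{B}$. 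Its associated Hardy spaces are the adjoints of those of $H^{\infty}$, namely $H^{\alpha}(\mathcal{B})=[(H^{\infty})^{\ast}]_{\alpha}=(H^{\alpha})^{\ast}$ and $H_{0}^{\infty}(\mathcal{B})=(H_{0}^{\infty})^{\ast}$.

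Now suppose $H^{\infty}\mathcal{W}\subseteq\mathcal{W}$ and set $\mathcal{W}^{\ast}=\{x^{\ast}:x\in\mathcal{W}\}$, a closed subspace of $L^{\alpha}(\mathcal{M},\tau)$. For $x\in\mathcal{W}$ and $a\in H^{\infty}$ one has $x^{\ast}a^{\ast}=(ax)^{\ast}\in\mathcal{W}^{\ast}$, so $\mathcal{W}^{\ast}\mathcal{B}\subseteq\mathcal{W}^{\ast}$; that is, $\mathcal{W}^{\ast}$ is right $\mathcal{B}$-invariant. Applying Theorem \ref{main} with $H^{\infty}$ replaced by $\mathcal{B}$ produces a closed subspace $\mathcal{Y}'$ and partial isometries $\{w_{\lambda}\}_{\lambda\in\Lambda}$ in $\mathcal{W}^{\ast}\cap\mathcal{M}$ with $w_{\lambda}^{\ast}\mathcal{Y}'=0$, $w_{\lambda}^{\ast}w_{\lambda}\in\mathcal{D}$, $w_{\lambda}^{\ast}w_{\mu}=0$ for $\lambda\neq\mu$, $\mathcal{Y}'=[(H_{0}^{\infty})^{\ast}\mathcal{Y}']_{\alpha}$, and $\mathcal{W}^{\ast}=\mathcal{Y}'\oplus^{col}(\oplus_{\lambda}^{col}w_{\lambda}(H^{\alpha})^{\ast})$.

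Finally I would take adjoints throughout. Put $\mathcal{Y}=(\mathcal{Y}')^{\ast}$ and $u_{\lambda}=w_{\lambda}^{\ast}$, which are partial isometries in $\mathcal{W}\cap\mathcal{M}$. Applying the involution to the column decomposition and using $(w_{\lambda}(H^{\alpha})^{\ast})^{\ast}=H^{\alpha}w_{\lambda}^{\ast}=H^{\alpha}u_{\lambda}$ gives $\mathcal{W}=\mathcal{Y}\oplus^{col}(\oplus_{\lambda}^{col}H^{\alpha}u_{\lambda})$, which is conclusion (4), while the relations $w_{\lambda}^{\ast}\mathcal{Y}'=0$, $w_{\lambda}^{\ast}w_{\lambda}\in\mathcal{D}$, $w_{\lambda}^{\ast}w_{\mu}=0$, and $\mathcal{Y}'=[(H_{0}^{\infty})^{\ast}\mathcal{Y}']_{\alpha}$ transform into the mirror-image forms of (1)--(3). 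The one place demanding care is exactly this bookkeeping: the involution interchanges left and right multiplication and swaps the initial and final projections $u_{\lambda}^{\ast}u_{\lambda}$ and $u_{\lambda}u_{\lambda}^{\ast}$ of each partial isometry, so the orthogonality and diagonal conditions, together with the invariance relation for $\mathcal{Y}$, emerge in transposed shape; verifying that these are precisely the asserted conditions (after the standard relabeling $u_{\lambda}\leftrightarrow u_{\lambda}^{\ast}$) is the main, though routine, obstacle.
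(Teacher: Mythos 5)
The paper offers essentially no argument for this corollary: it simply observes that a normalized, unitarily invariant, $\Vert\cdot\Vert_{1,\tau}$-dominating continuous norm lies in $N_{\Delta}(\mathcal{M},\tau)$ (take $g=I$, so $\Delta(g)=1>0$), whence Theorem \ref{main} applies verbatim with $\rho=\tau$; the left/right switch in the printed hypothesis and in item (4) is evidently treated as notational. Your opening reduction ($N_{1}\subseteq N_{\Delta}$, $\rho=\tau$) matches that, but you then take the printed left-handed statement seriously and derive it from the right-handed Theorem \ref{main} by passing to the adjoint algebra $\mathcal{B}=(H^{\infty})^{\ast}$. That part is sound and is genuinely more than the paper does: the involution is $\alpha$-isometric (your polar-decomposition argument works because $\mathcal{M}$ is finite, so $v$ may be taken unitary), $\mathcal{B}$ is again a finite maximal subdiagonal algebra with the same diagonal and expectation, and $\mathcal{W}^{\ast}$ is right $\mathcal{B}$-invariant, so Theorem \ref{main} applies to it.

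The gap is in your last step. Taking adjoints of the conclusion for $\mathcal{W}^{\ast}$ and setting $u_{\lambda}=w_{\lambda}^{\ast}$, $\mathcal{Y}=(\mathcal{Y}')^{\ast}$ yields $\mathcal{Y}u_{\lambda}^{\ast}=0$, $u_{\lambda}u_{\lambda}^{\ast}\in\mathcal{D}$, $u_{\lambda}u_{\mu}^{\ast}=0$ for $\lambda\neq\mu$, and $\mathcal{Y}=[\mathcal{Y}H_{0}^{\infty}]_{\alpha}$ (a row-sum decomposition), not the printed conditions $u_{\lambda}^{\ast}\mathcal{Y}=0$, $u_{\lambda}^{\ast}u_{\lambda}\in\mathcal{D}$, $u_{\lambda}^{\ast}u_{\mu}=0$, $\mathcal{Y}=[H_{0}^{\infty}\mathcal{Y}]_{\alpha}$. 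These are not equivalent and cannot be recovered by ``relabeling $u_{\lambda}\leftrightarrow u_{\lambda}^{\ast}$'': any such relabeling would simultaneously change item (4) from $\oplus H^{\alpha}u_{\lambda}$ to $\oplus H^{\alpha}u_{\lambda}^{\ast}$. In fact the corollary as printed is internally inconsistent (it pairs a left-module decomposition with the side conditions of the right-module theorem), so no argument yields it literally; you should either state and prove the fully mirrored version your adjoint argument actually delivers, or read the hypothesis as $\mathcal{W}H^{\infty}\subseteq\mathcal{W}$ and item (4) as $\oplus u_{\lambda}H^{\alpha}$, in which case the corollary is an immediate specialization of Theorem \ref{main} and the adjoint machinery is unnecessary.
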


If we take $\alpha =\left\Vert \cdot \right\Vert _{p}$, then we have D.
Blecher and L. E. Labuschagne's result in \cite{B.L.}.

\begin{corollary}
Let $\mathcal{M}$ be a finite von Neumann algebra with a faithful, normal,
tracial state $\tau $ and $H^{\infty }$ be a finite subdiagonal subalgebra
of $\mathcal{M}$. Let $\mathcal{D}=H^{\infty }\cap (H^{\infty })^{\ast }$.
Assume that $\mathcal{W}$ is a closed subspace of $L^{p}(\mathcal{M},\tau
),1\leq p\leq \infty $ such that $H^{\infty }\mathcal{W}\subseteq \mathcal{W}
$ . Then there exists a closed subspace $\mathcal{Y}$ of $L^{p}(\mathcal{M}%
,\tau )$ and a family $\{u_{\lambda }\}_{\lambda \in \Lambda }$ of partial
isometries in $\mathcal{W}\cap \mathcal{M}$ such that\newline
(1) $u_{\lambda }^{\ast }\mathcal{Y}=0$ for all $\lambda \in \Lambda $,%
\newline
(2) $u_{\lambda }^{\ast }u_{\mu }\in \mathcal{D}$ and $u_{\lambda }^{\ast
}u_{\mu }=0$ for all $\lambda ,\mu \in \Lambda $ with $\lambda \neq \mu $,%
\newline
(3) $\mathcal{Y}=[H_{0}^{\infty }\mathcal{Y}]_{p}$,\newline
(4) $\mathcal{W}=\mathcal{Y}\oplus ^{col}(\oplus _{\lambda \in
\Lambda }^{col}H^{p}u_{\lambda })$.
\end{corollary}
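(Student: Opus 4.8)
The plan is to reduce the statement to the already-established weak$^\ast$-closed version (Lemma \ref{colsum}) by passing to the tracial state $\rho$ furnished by Theorem \ref{cdominating}, and then to transport the resulting decomposition between $\mathcal{M}$ and $L^{\alpha}(\mathcal{M},\rho)$ via the density theorem. First I would invoke Theorem \ref{cdominating} to obtain the positive $g\in L^{1}(\mathcal{Z},\tau)$ with $\Delta(g)>0$ and the faithful normal tracial state $\rho(\cdot)=\tau(\cdot g)$, so that $\alpha$ is $\Vert\cdot\Vert_{1,\rho}$-dominating, i.e. $\alpha\in N_{1}(\mathcal{M},\rho)$. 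The crucial observation is that since $\alpha$, $\mathcal{M}$, and $H^{\infty}$ are unchanged, one has the literal identifications $L^{\infty}(\mathcal{M},\tau)=\mathcal{M}=L^{\infty}(\mathcal{M},\rho)$, $L^{\alpha}(\mathcal{M},\tau)=L^{\alpha}(\mathcal{M},\rho)$, and $H^{\alpha}(\mathcal{M},\tau)=H^{\alpha}(\mathcal{M},\rho)$. This is what allows me to run all the Hardy-space machinery developed for $\rho$ while stating the conclusion for $\tau$.

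Next, since $\mathcal{W}$ is $\alpha$-closed and right-invariant under $H^{\infty}$, Theorem \ref{density}(2) gives that $\mathcal{W}\cap\mathcal{M}$ is weak$^\ast$-closed in $\mathcal{M}$, and it plainly still satisfies $(\mathcal{W}\cap\mathcal{M})H^{\infty}\subseteq\mathcal{W}\cap\mathcal{M}$. I would then apply Lemma \ref{colsum} to this weak$^\ast$-closed right $H^{\infty}$-module, producing a weak$^\ast$-closed subspace $\mathcal{Y}_{1}$ with $\mathcal{Y}_{1}=\overline{\mathcal{Y}_{1}H_{0}^{\infty}}^{w\ast}$ and a family of partial isometries $\{u_{i}\}$ satisfying the orthogonality relations $u_{i}^{\ast}\mathcal{Y}_{1}=0$, $u_{i}^{\ast}u_{i}\in\mathcal{D}$, and $u_{j}^{\ast}u_{i}=0$ for $i\neq j$, together with the weak$^\ast$ internal column decomposition $\mathcal{W}\cap\mathcal{M}=\mathcal{Y}_{1}\oplus^{col}(\oplus_{i}^{col}u_{i}H^{\infty})$.

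The heart of the argument is to lift this decomposition to the $\alpha$-norm. I would set $\mathcal{Y}=[\mathcal{Y}_{1}]_{\alpha}$ and prove the key identity $[u_{i}H^{\infty}]_{\alpha}=u_{i}H^{\alpha}$: the inclusion $\subseteq$ is immediate, while for the reverse I take $x\in[u_{i}H^{\infty}]_{\alpha}$, approximate it by $u_{i}x_{n}$ with $x_{n}\in H^{\infty}$, left-multiply by $u_{i}^{\ast}$ (using $u_{i}^{\ast}u_{i}\in\mathcal{D}\subseteq H^{\infty}$ and that left multiplication by the partial isometry $u_{i}$ is $\alpha$-contractive, since $u_{i}^{\ast}u_{i}$ is a projection $p$ and $px=\tfrac12(x+(2p-1)x)$ with $2p-1$ a symmetry) to obtain $u_{i}^{\ast}x\in H^{\alpha}$, and then recover $x=u_{i}(u_{i}^{\ast}x)$ from $u_{i}u_{i}^{\ast}u_{i}x_{n}=u_{i}x_{n}$. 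Using Theorem \ref{density}(3), which gives $\mathcal{W}=[\mathcal{W}\cap\mathcal{M}]_{\alpha}$, together with Theorem \ref{density}(4), which lets me replace the weak$^\ast$-closed span by the $\alpha$-closure of the corresponding $H^{\infty}$-invariant span, the column decomposition transfers to $\mathcal{W}=\mathcal{Y}\oplus^{col}(\oplus_{i}^{col}u_{i}H^{\alpha})$. Condition (3) then follows by sandwiching: Theorem \ref{density}(1) applied to the right-invariant space $\mathcal{Y}_{1}H_{0}^{\infty}$ yields $[\mathcal{Y}_{1}H_{0}^{\infty}]_{\alpha}\cap\mathcal{M}=\overline{\mathcal{Y}_{1}H_{0}^{\infty}}^{w\ast}=\mathcal{Y}_{1}$, whence $\mathcal{Y}\supseteq[\mathcal{Y}H_{0}^{\infty}]_{\alpha}\supseteq[\mathcal{Y}_{1}H_{0}^{\infty}]_{\alpha}=[[\mathcal{Y}_{1}H_{0}^{\infty}]_{\alpha}\cap\mathcal{M}]_{\alpha}=[\mathcal{Y}_{1}]_{\alpha}=\mathcal{Y}$.

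I expect the main obstacle to be the careful verification that the internal-column-sum orthogonality and the partial-isometry relations genuinely survive the passage from the weak$^\ast$ topology to the $\alpha$-norm closure: in particular, establishing $u_{\lambda}^{\ast}\mathcal{Y}=0$ on the $\alpha$-closure, and checking that left multiplication by $u_{i}u_{i}^{\ast}$ and by $I-\sum_{i}u_{i}u_{i}^{\ast}$ remain contractive projections onto the respective $\alpha$-summands. These verifications rely essentially on the factorization theorem (Theorem \ref{factorization}) being available for $\rho$ and on the identification $H^{\alpha}(\mathcal{M},\rho)=H^{1}(\mathcal{M},\rho)\cap L^{\alpha}(\mathcal{M},\rho)$ from Theorem \ref{Halpha}, which are precisely the new ingredients replacing the tools unavailable in a general $L^{\alpha}$ space.
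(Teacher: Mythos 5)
Your argument is correct, but it is not the route the paper takes: the paper obtains this corollary in one line by setting $\alpha=\Vert\cdot\Vert_{p}$ in Theorem \ref{main}, having already recorded (in the examples of Section 2) that each $p$-norm with $1\leq p<\infty$ lies in $N_{\Delta}(\mathcal{M},\tau)$ --- indeed in $N_{1}(\mathcal{M},\tau)$ with $g=1$, so one may take $\rho=\tau$ and no change of tracial state is needed. What you have written is, in substance, a re-run of the paper's proof of Theorem \ref{main} specialized to $\alpha=\Vert\cdot\Vert_{p}$: the chain Theorem \ref{cdominating} $\rightarrow$ Theorem \ref{density}(2) $\rightarrow$ Lemma \ref{colsum} $\rightarrow$ $\mathcal{Y}=[\mathcal{Y}_{1}]_{\alpha}$ and $[u_{i}H^{\infty}]_{\alpha}=u_{i}H^{\alpha}$ $\rightarrow$ Theorem \ref{density}(1),(3),(4) is exactly the argument given there. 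So your approach is sound but redundant once Theorem \ref{main} is available; its only advantage is self-containment. Two caveats. First, for $p=\infty$ your reduction does not literally apply: the operator norm fails condition (4) in the definition of $N(\mathcal{M},\tau)$ (projections of small trace still have norm one), so Theorem \ref{cdominating} and the $L^{\alpha}$ machinery are unavailable; but in that case $[\cdot]_{\infty}$ denotes the weak* closure and the statement is precisely Lemma \ref{colsum}, so that case should be dispatched separately rather than folded into the general scheme. Second, the corollary is stated in left-module form ($H^{\infty}\mathcal{W}\subseteq\mathcal{W}$, summands $H^{p}u_{\lambda}$) while you argue the right-module form; the two are interchanged by taking adjoints, and the paper is itself loose on this point, but the passage deserves an explicit sentence.
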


\section{Generalized Beurling theorem for special von Neumann algebras}

In main theorem \ref{main}, if we take $\mathcal{M}=L^{\infty }(\mathbb{%
\mathbb{T}},\mu ),$ $H^{\infty }=H^{\infty }(\mathbb{T},\mu ),$ then $%
\mathcal{D=}H^{\infty }\cap (H^{\infty })^{\ast }=%
\mathbb{C}
$ and the center $\mathcal{Z}$ of $\mathcal{M}=L^{\infty }(\mathbb{T},\mu )$
is itself. So $\mathcal{Z}\nsubseteqq \mathcal{D=%
\mathbb{C}
}.$ However, for a finite von Neumann algebra $\mathcal{M}$ with a faithful
normal tracial state $\tau ,$ let $H^{\infty }$ be a finite subdiagonal
subalgebra of $\mathcal{M},$ $\mathcal{D=}H^{\infty }\cap (H^{\infty
})^{\ast },$ if the center $\mathcal{Z}\subseteq \mathcal{D},$ then
generalized Beurling theorem holds for normalized, unitarily invariant,
continuous norms on $(\mathcal{M},\tau ).$

\begin{theorem}
Let $\mathcal{M}$ be a finite von Neumann algebra with a faithful, normal,
tracial state $\tau .$ Let $H^{\infty }$ be a finite subdiagonal subalgebra
of $\mathcal{M},$ $\mathcal{D}=H^{\infty }\cap (H^{\infty })^{\ast }$, and
the center $\mathcal{Z}\subseteq \mathcal{D}.$ Let $\alpha $ be a
normalized, unitarily invariant, continuous norm on $(\mathcal{M},\tau )$.
If $\mathcal{W}$ is a closed subspace of $L^{\alpha }(\mathcal{M},\tau )$
such that $\mathcal{W}H^{\infty }\subseteq \mathcal{W}$, then there exists a
closed subspace $\mathcal{Y}$ of $L^{\alpha }(\mathcal{M},\tau )$ and a
family $\{u_{\lambda }\}_{\lambda \in \Lambda }$ of partial isometries in $%
\mathcal{M}$ such that\newline
(1) $u_{\lambda }^{\ast }\mathcal{Y}=0$ for all $\lambda \in \Lambda $,%
\newline
(2) $u_{\lambda }^{\ast }u_{\lambda }\in \mathcal{D}$ and $u_{\lambda
}^{\ast }u_{\mu }=0$ for all $\lambda ,\mu \in \Lambda $ with $\lambda \neq
\mu $,\newline
(3) $\mathcal{Y}=[H_{0}^{\infty }\mathcal{Y}]_{\alpha }$,\newline
(4) $\mathcal{W}=\mathcal{Y}\oplus ^{col}(\oplus _{\lambda \in
\Lambda }^{col}u_{\lambda }H^{\alpha })$.
\end{theorem}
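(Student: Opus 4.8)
The plan is to reduce this statement to the main theorem (Theorem \ref{main}) by changing the reference trace from $\tau$ to the state $\rho$ produced by Theorem \ref{cdominating}; the point is that the hypothesis $\mathcal{Z}\subseteq\mathcal{D}$ is exactly what is needed to keep $H^{\infty}$ subdiagonal after this change of trace. First I would invoke Theorem \ref{cdominating} to obtain a positive $g\in L^{1}(\mathcal{Z},\tau)$ with $\rho(\cdot)=\tau(\cdot g)$ a faithful normal tracial state and $\alpha(x)\ge c\Vert x\Vert_{1,\rho}$ for some $c>0$. The goal is then to show $\alpha\in N_{1}(\mathcal{M},\rho)\subseteq N_{\Delta}(\mathcal{M},\rho)$ (the inclusion coming from the weight $g=1$, for which $\Delta(1)=1>0$), which, together with $H^{\infty}$ being subdiagonal with respect to $\rho$, lets Theorem \ref{main} apply with $\rho$ in place of $\tau$. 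Since $L^{\alpha}$, $H^{\alpha}$, $H_{0}^{\infty}$ and $\mathcal{D}$ all depend only on $\alpha$ and $H^{\infty}$ and not on the choice of trace, and the invariance condition $\mathcal{W}H^{\infty}\subseteq\mathcal{W}$ is trace-free, the conclusion then transfers back unchanged.

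The two things requiring genuine verification are the $\rho$-continuity of $\alpha$ and the subdiagonality of $H^{\infty}$ with respect to $\rho$. For continuity, normalization, the gauge property and unitary invariance are trace-free, so only condition (4) of the definition of $N(\mathcal{M},\rho)$ must be checked; I would establish that $\rho(e)\to 0$ implies $\tau(e)\to 0$ for nets of projections, after which composing with the $\tau$-continuity of $\alpha$ gives the result. Writing $\rho(e)=\tau(g\varphi(e))$ via the center-valued trace $\varphi$ of Proposition \ref{phiproperty} (using $\varphi(zx)=z\varphi(x)$ for central $z$) and transporting to $L^{\infty}(X,\mu)$ through the isomorphism $\pi$ of Theorem \ref{cdominating}, this becomes the implication $\int\phi\,d\lambda\to 0\Rightarrow\int\phi\,d\mu\to 0$ for $0\le\phi=\pi(\varphi(e))\le 1$. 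Since $\lambda$ and $\mu$ are mutually absolutely continuous by Lemma \ref{lmforcdominating}, the finite measure $\mu$ is uniformly absolutely continuous with respect to $\lambda$, and truncating $\phi$ at a level $t$ (with $\lambda(\{\phi>t\})\le t^{-1}\int\phi\,d\lambda$ by Markov's inequality) yields the implication. Hence $\alpha\in N_{1}(\mathcal{M},\rho)$.

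The step where the special hypothesis is used, and the one I expect to be the crux, is showing $H^{\infty}$ is subdiagonal with respect to $\rho$. Here $\mathcal{Z}\subseteq\mathcal{D}$ forces $g\in\mathcal{D}$, so by the $\mathcal{D}$-module property $\Phi_{\tau}(x)g=\Phi_{\tau}(xg)$, and therefore $\rho(\Phi_{\tau}(x))=\tau(\Phi_{\tau}(xg))=\tau(xg)=\rho(x)$; by uniqueness of the trace-preserving conditional expectation onto $\mathcal{D}$ this gives $\Phi_{\rho}=\Phi_{\tau}$. Consequently the weak* density of $H^{\infty}+(H^{\infty})^{\ast}$, the multiplicativity $\Phi_{\rho}(xy)=\Phi_{\rho}(x)\Phi_{\rho}(y)$ on $H^{\infty}$, and the diagonal identity $\mathcal{D}=H^{\infty}\cap(H^{\infty})^{\ast}$ all persist, so $H^{\infty}$ is a finite subdiagonal subalgebra of $(\mathcal{M},\rho)$ and $H_{0}^{\infty}=\ker(\Phi_{\rho})\cap H^{\infty}$ is unchanged.

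With $\alpha\in N_{\Delta}(\mathcal{M},\rho)$ and this subdiagonal structure in hand, Theorem \ref{main} applied to $(\mathcal{M},\rho)$ produces a closed subspace $\mathcal{Y}$ of $L^{\alpha}(\mathcal{M},\rho)=L^{\alpha}(\mathcal{M},\tau)$ and partial isometries $\{u_{\lambda}\}$ satisfying (1)--(4) with $H^{\alpha}(\mathcal{M},\rho)=H^{\alpha}(\mathcal{M},\tau)$, which are exactly the assertions to be proved. I would close by noting that without $\mathcal{Z}\subseteq\mathcal{D}$ the central weight $g$ need not lie in $\mathcal{D}$, so $\Phi_{\tau}$ would fail to preserve $\rho$ and the subdiagonal structure would be destroyed; this is precisely why the determinant hypothesis of Theorem \ref{main} can be dropped in the present setting and replaced by $\mathcal{Z}\subseteq\mathcal{D}$.
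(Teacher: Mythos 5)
Your proposal is correct and follows essentially the same route as the paper: both pass to the trace $\rho$ supplied by Theorem \ref{cdominating}, use $\mathcal{Z}\subseteq \mathcal{D}$ to show that the trace-preserving conditional expectation onto $\mathcal{D}$ is unchanged (so $H^{\infty}$ remains subdiagonal for $\rho$), and then invoke the $\Vert \cdot \Vert _{1,\rho }$-dominating version of the Beurling--Chen--Hadwin--Shen theorem. Your explicit verification that $\alpha$ stays continuous with respect to $\rho$ (via the mutual absolute continuity of $\lambda$ and $\mu$ and a Markov truncation) fills in a detail the paper leaves implicit, but it does not change the structure of the argument.
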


\begin{proof}
By theorem \ref{cdominating}, there exists a faithful normal tracial state $%
\rho $ on $\mathcal{M}$ and a $c>0$ such that $\alpha $ is a continuous
normalized unitarily invariant $c\Vert \cdot \Vert _{1,\rho }$-dominating
norm on $(\mathcal{M},\rho )$. We know that $\Phi _{\mathcal{D},\tau }$ is
multiplicative on $H^{\infty }$. In general, $\Phi _{\mathcal{D},\rho }$
won't be multiplicative on $H^{\infty }$, however, the condition $\mathcal{Z}%
\subset \mathcal{D}$ makes sure $\Phi _{\mathcal{D},\rho }$ is
multiplicative on $H^{\infty }$. In order to do this,.we can choose $0\leq
x_{1}\leq x_{2}\leq \cdots $ in $\mathcal{Z}$ such that, for every $x\in 
\mathcal{M},$%
\begin{equation*}
\rho \left( x\right) =\lim_{n\rightarrow \infty }\tau \left( x_{n}x\right)
=\lim_{n\rightarrow \infty }\tau \left( \Phi _{\mathcal{D},\tau }\left(
x_{n}x\right) \right) .
\end{equation*}%
Since $\mathcal{Z\subset D}$, $\Phi _{\mathcal{D},\tau }\left( x_{n}x\right)
=x_{n}\Phi _{\mathcal{D},\tau }\left( x\right) .$ Thus%
\begin{equation*}
\rho \left( x\right) =\lim_{n\rightarrow \infty }\tau \left( x_{n}\Phi _{%
\mathcal{D},\tau }\left( x\right) \right) =\rho \left( \Phi _{\mathcal{D}%
,\tau }\left( x\right) \right) .
\end{equation*}%
It follows that $\Phi _{\mathcal{D},\tau }=\Phi _{\mathcal{D},\rho }$. This
now reduces to the $c\left\Vert \cdot \right\Vert _{1}$-dominating version
of the Chen-Hadwin-Shen theorem in \cite{Chen} above.
\end{proof}

\end{document}